\documentclass[11pt]{article}

\usepackage[T1]{fontenc}
\usepackage[utf8]{inputenc}
\usepackage[margin=1in]{geometry}
\usepackage{amsmath,amssymb,amsthm}
\usepackage{newtxtext,newtxmath}
\usepackage{bm,mathrsfs}
\usepackage{graphicx}
\usepackage{xcolor}
\definecolor{clemson-orange}{RGB}{234,106,32}
\definecolor{chicago-maroon}{RGB}{128,0,0}
\definecolor{northwestern-purple}{RGB}{82,0,99}
\definecolor{cornell-red}{RGB}{179,27,27}
\definecolor{sauder-green}{RGB}{171,180,0}
\definecolor{harveymudd-gold}{RGB}{178,139,51}
\definecolor{lawngreen}{RGB}{0,250,154}
\definecolor{gray}{RGB}{192,192,192}
\usepackage{enumitem,booktabs,multirow}
\usepackage{algorithm,algpseudocode}
\usepackage{tikz,subcaption}
\usepackage{microtype}
\usepackage[authoryear,round]{natbib}
\definecolor{linkblue}{RGB}{0,51,102}
\usepackage[colorlinks=true,allcolors=linkblue]{hyperref}
\usepackage[nameinlink]{cleveref}
\hypersetup{pdftitle={Tensor nuclear norm minimization with linear constraints}}

\theoremstyle{plain}
\newtheorem{theorem}{Theorem}
\newtheorem{lemma}{Lemma}
\newtheorem{proposition}{Proposition}

\theoremstyle{definition}
\newtheorem{definition}{Definition}
\newtheorem{example}{Example}
\theoremstyle{remark}
\newtheorem{remark}{Remark}

\def\R{\mathbb{R}}

\usetikzlibrary{arrows.meta,positioning}
\def\T{\boldsymbol{\mathcal{T}}}

\def\M{\boldsymbol{\mathcal{M}}}
\def\A{\boldsymbol{\mathcal{A}}}

\def\U{\mathbf{U}}
\def\K{\boldsymbol{\mathcal{K}}}
\def\B{\mathcal{B}}
\def\V{\mathbf{V} }
\def\D{\mathbf{D}}
\def\BT{\mathbf{T}}
\def\BZ{\mathbf{Z}}
\def\X{\mathcal{X}}

\def\P{\mathbf{P}}
\def\bQ{\mathbf{Q}}
\def\Y{\mathcal{Y}}

\def\S{\mathbb{S}}
\def\I{\mathbf{I}}
\def\Z{\boldsymbol{\mathcal{Z}}}

\def\Zero{\mathbf{0}}
\def\x{\mathbf{x}}
\def\y{\mathbf{y}}
\def\z{{\mathbf{z}}}
\def\p{\mathbf{p}}
\def\q{\mathbf{q}}

\def\tr{\mathrm{tr}}

\def\la{\langle}
\def\ra{\rangle}

\crefname{assumption}{Assumption}{Assumptions}
\crefname{lemma}{Lemma}{Lemmas}
\crefname{theorem}{Theorem}{Theorems}
\crefname{corollary}{Corollary}{Corollaries}
\crefname{proposition}{Proposition}{Propositions}
\crefname{condition}{Condition}{Conditions}
\crefname{claim}{Claim}{Claims}
\crefname{procedure}{Procedure}{Procedures}
\crefname{algorithm}{Algorithm}{Algorithms}
\crefname{figure}{Figure}{Figures}
\crefname{remark}{Remark}{Remarks}
\crefname{section}{Section}{Sections}
\crefname{procedure}{Procedure}{Procedures}
\crefname{example}{Example}{Examples}
\crefname{definition}{Definition}{Definitions}
\crefname{table}{Table}{Tables}
\crefname{equation}{}{}
\crefname{enumi}{}{}
\crefname{conjecture}{Conjecture}{Conjectures}
\crefname{step}{Step}{Steps}
\crefname{appendix}{Appendix}{Appendices}
\crefname{footnote}{Footnote}{Footnotes}

\title{Tensor nuclear norm minimization with linear constraints}
\author{
Renjie Chen\\
\small Department of Systems Engineering and Engineering Management,\\ \small The Chinese University of Hong Kong\\
\small \texttt{rchen@se.cuhk.edu.hk}
\and
Nanxi Zhang\\
\small Ivey Business School,\\ \small University of Western Ontario\\
\small \texttt{nzhang@ivey.ca}
\and
Bo Jiang\\
\small School of Information Management and Engineering,\\ \small Shanghai University of Finance and Economics.\\
\small \texttt{isyebojiang@gmail.com}
}
\date{}

\begin{document}
\maketitle
\begin{abstract}
We study the tensor nuclear norm minimization with general linear constraints, a fundamental model for low-rank tensor optimization that includes many practical applications in computer vision, recommendation systems, revenue management, transportation research. Although the tensor nuclear norm is a convex surrogate for the tensor rank, optimizing it remains computationally challenging because evaluating the nuclear norm itself is NP-hard. We propose a semidefinite programming framework that exposes a finite structure hidden in this problem, even though it appears to possess infinitely many constraints. In particular, we show that the tensor nuclear norm admits an exact SDP representation supported by a finite number of points on the unit sphere. Based on this representation, we develop two complementary solution approaches. The first replaces the unknown support set by a finite sphere discretization which yields a solution with a provable worst-case approximation guarantee. The second selects spherical points adaptively, returns an exact optimum upon finite termination, and otherwise converges asymptotically to an optimal solution. 
Numerical experiments on synthetic tensor completion instances and real highway traffic data demonstrate that the adaptive method achieves competitive, and often substantially improved, recovery accuracy relative to existing methods.
\end{abstract}

\noindent\textbf{Keywords:} tensor nuclear norm minimization, semidefinite program, semi-infinite program, minimax optimization, low-rank tensor recovery.

\section{Introduction}\label{sec: intro}
In this paper, we consider the tensor nuclear norm minimization with general linear constraints:
\begin{align}
      \min_{\T\in\R^{n_1\times n_2\times n_3}}\;\; &\|\T\|_*\nonumber\\
        \text{s.t.}\;\; &\mathscr{A}(\T)=\mathbf{b},\label{prob:tensor-comp-linear-constr}\tag{TNN-Min}
\end{align}
where $\mathscr{A}:\R^{n_1\times n_2\times n_3}\to \R^d$ is a given linear operator and $\mathbf{b}\in \R^d$ is a given vector.
The operator $\mathscr{A}$ is associated with $d$ tensors $\A_1,\dots,\A_d\in \R^{n_1\times n_2\times n_3}$ and the $\ell$th component of $\mathscr{A}(\T)$ is given by
$[\mathscr{A}(\T)]_\ell := \la \A_\ell, \T\ra$ for $\ell=1,\dots,d$.
One can think of each constraint as a way to observe some partial information of $\T$.
The objective function $\|\T\|_*$ is the nuclear norm of $\T$ (see \cref{sec:nuc-norm} for a formal definition).
For a clearer presentation, we state our results for third-order tensors, and our results can be easily extended to higher orders. Throughout this paper, we assume that the feasible set $\{\T\in\R^{n_1\times n_2\times n_3}:\mathscr{A}(\T)=\mathbf{b}\}$ is nonempty and that $\A_1,\dots,\A_d$ are linearly independent.

Problem \cref{prob:tensor-comp-linear-constr} captures a large family of tensor recovery tasks in which multi-dimensional data is only partially observed.
One %
typical case  is the \emph{tensor completion} problem, where the entries of tensor $\T$ are only known on an index set $\Omega\subsetneq[n_1]\times[n_2]\times[n_3]$, where $[n]:=\{1,\dots,n\}$ for a positive integer $n$.
The problem is commonly modeled as $\min\{\operatorname{rank}(\T):\T(\omega)=\M(\omega)~\forall\,\omega\in\Omega\}$,
where every $\M(\omega)$ is observed and $\operatorname{rank}(\T)$ is the CANDECOMP/PARAFAC (CP) rank of $\T$. %

Tensor completions arise in many practical applications. 
In computer vision, images and videos are naturally third-order arrays which may have corrupted or missing pixels \citep{wang2004compact,ref:liu2012tensor,5651225}.
In recommendation systems, user-item interactions under different contexts form a sparsely observed tensor \citep{5197422}.
In revenue management, \citet{farias2019learning} show that transaction data form a tensor whose slices correspond to different types of consumer behavior, with many entries missing due to promotions and seasonality. In transportation research, \citet{ref:chen2021low} model spatiotemporal highway traffic speeds (indexed by day, time-of-day, and sensor location) as a low-rank tensor with missing measurements.

From the optimization perspective, directly minimizing the CP rank is computationally difficult because the rank function is nonconvex and discontinuous, in addition to the NP-hardness of the CP rank \citep{hastad1990tensor,hillar2013most}.
For matrices, a straightforward approach is convexification: The matrix nuclear norm is the convex envelope of the matrix rank 
\citep{fazel2002matrix,recht2010guaranteed}, and minimizing it recovers low-rank matrices exactly under some mild conditions \citep{CandesRecht2009,candes2010power}.
\citet{yuan2016tensor} generalize this approach to tensors and replace the CP rank with the tensor nuclear norm.
They show that the vanilla nuclear-norm formulation better exploits the tensor structure and provably requires fewer observed entries for reliable completions compared with matricization approaches that flatten the tensor into a matrix.
It has been a common practice in the literature to use the tensor nuclear norm as a natural convex surrogate of the CP rank, making tensor completions a typical example of \cref{prob:tensor-comp-linear-constr}.

Although the tensor nuclear norm is the convex surrogate of the tensor rank, 
\cref{prob:tensor-comp-linear-constr} is still NP-hard to solve because \emph{computing} the tensor nuclear norm itself is in general NP-hard \citep{ref:friedland2018nuclear}. 
Even the tools for the computation of tensor nuclear norm are very limited, not to mention its optimization over certain constraints. To the best of our knowledge, no previous literature has studied the computation aspect of \cref{prob:tensor-comp-linear-constr}. This gap between the statistical appeal of the model and the computational difficulty motivates our work.

\subsection{Challenge, ideas and contributions}
\paragraph{Main challenges.} Although computing the tensor nuclear norm itself is extremely difficult, alternative formulations are available to characterize it. \cite{he2023approximation} recently proposed an SDP formulation with an infinite (in fact uncountable) number of constraints via its dual norm, the tensor spectral norm. In particular, let $\mathcal{\T}=(\BT_1,\dots,\BT_{n_1})\in\R^{n_1\times n_2 \times n_3}$ with $\BT_i\in\R^{n_2\times n_3}$ for $i\in[n_1]$. The nuclear norm of $\mathcal{\T}$ can be  formulated~\cite[Equation (3.6)]{he2023approximation} as 
\begin{align}
   \|\T\|_*= \max_{\|\Z\|_\sigma \leq 1} \la \T,\Z \ra\;= \max_{\Z}\left\{
        \langle \T,\Z\rangle
        :\begin{bmatrix}
        \I_{n_2} & \sum_{i=1}^{n_1}x_i\BZ_i \\
        \sum_{i=1}^{n_1}x_i\BZ_i^\top &\I_{n_3}
    \end{bmatrix}\succeq 0\;\forall\,\x\in \mathbb{B}^{n_1}\right\},\tag{D-SDP}\label{prob:nuclear-norm-SDP-formulation}
\end{align}
where $\mathbb{B}^{n_1}$ is the unit sphere in $\R^{n_1}$, $\x=(x_1,\dots,x_{n_1})^\top\in\R^{n_1}$, and $\mathcal{\Z}=(\BZ_1,\dots,\BZ_{n_1})\in\R^{n_1\times n_2 \times n_3}$ with $\BZ_i\in\R^{n_2\times n_3}$ for $i\in[n_1]$.
We call it D-SDP since this reformulation is derived from the dual norm of tensor nuclear norm.
By substituting this formulation into the original model \cref{prob:tensor-comp-linear-constr}, we arrive at a minimax problem
\begin{equation} \label{minimax}
  \min_{\mathscr{A}(\T)=\mathbf{b}}\max_{\BZ_1,\dots,\BZ_{n_1}} \left\{\sum_{i=1}^{n_1}\la \BT_i,\BZ_i\ra:\begin{bmatrix}
        \I_{n_2} & \sum_{i=1}^{n_1}x_i\BZ_i \\
        \sum_{i=1}^{n_1}x_i\BZ_i^\top &\I_{n_3}
    \end{bmatrix}\succeq 0\;\forall\,\x\in\mathbb{B}^{n_1}\right\},
\end{equation}
The inner problem, a convex optimization with infinitely many constraints, is called \emph{semi-infinite programming }(SIP) and known to be difficult to solve. 
Although numerous algorithms for minimax problems have been proposed in recent years, none are directly applicable to our model owing to the presence of infinitely many SDP constraints, which in turn renders projection onto the feasible region of \cref{minimax} challenging.
The core of our method in this paper is to overcome the infinity of \cref{minimax} by finding a finite set to approximate the constraint space while keeping the optimality or controlling the optimality loss.

\paragraph{Sketch of the idea.}

The infinity issue of \cref{minimax} originates from \cref{prob:nuclear-norm-SDP-formulation}, where every point on the unit sphere corresponds to a constraint.
For the sake of tractable computation, we need to find a way to render the ``infinity" of the problem to ``finity".
The question is whether finitely many sphere-indexed constraints
can preserve the optimal value for a given tensor, or approximate
it with a controlled error. We first show that exact preservation
of the optimal value is possible.
In particular, for every $\T\in\R^{n_1\times n_2\times n_3}$, there exists a finite support set $\Theta\subsetneq \mathbb{B}^{n_1}$ with $|\Theta|\leq n_1n_2n_3$ such that the following SDP with finite number of constraints has the same optimal value as \cref{prob:nuclear-norm-SDP-formulation}:
 \begin{align*}
        \|\T\|_* = \max_{\Z}\left\{
        \langle \T,\Z\rangle
        :\begin{bmatrix}
        \I_{n_2} & \sum_{i=1}^{n_1}x_i\BZ_i \\
        \sum_{i=1}^{n_1}x_i\BZ_i^\top &\I_{n_3}
    \end{bmatrix}\succeq 0\;\forall\,\x\in\Theta
        \right\}.
    \end{align*}
This result {\it theoretically} gives a finite SDP representation to tensor nuclear norm. It is worth mentioning that such result is 
derived from another SDP formulation of the tensor nuclear norm and we call it ``P-SDP" (see \cref{sec:nuc-norm}), as it is from the definition of the nuclear norm in \cref{tensor-nuclear-norm}, i.e., the primal side. We show that the strong duality holds for \cref{prob:nuclear-norm-SDP-formulation,vD-eq2}.

The crux of the above problem thus reduces to identifying the set \(\Theta\), which nevertheless remains a difficult task.
If this were not the case, the original problem would be solvable in polynomial time.
In this paper, we bypass the difficulty of finding the exact $\Theta$, 
instead, we try to find good approximations. Two approaches are developed. 
The first (\cref{sec:compute}) adopts the sphere
discretization idea in \cite{he2023approximation} and uses the so-called $\tau$-hitting set to replace the set $\Theta$. We show that the resulting approximation problem has worst-case theoretical guarantee of the original problem. The second approach (\cref{sec:dual}) selects sphere points adaptively, a similar idea to column generation for linear programming adopted in \cite{he2026nuclear} to compute the tensor nuclear norm. We prove that the adaptive procedure converges asymptotically to the optimal value of \cref{prob:tensor-comp-linear-constr}.

\paragraph{Contributions.}

We summarize our contributions as follows.
\begin{enumerate}[label=(C\arabic*),leftmargin=*]
    \item \emph{A finite SDP representation of the tensor nuclear norm.} \cref{thm:nuc-norm-finite-rep} represents the tensor nuclear norm as a finite SDP supported on at most $n_1n_2n_3$ points of the unit sphere. Our analysis (\cref{lemma:nuclear-rank-ub}) also provides an explicit upper bound on the tensor nuclear rank.\label{item:c1}
    \item \emph{A constant-factor approximation method.} Discretizing the sphere by a $\tau$-hitting set yields a $\tau$-approximate solution of \cref{prob:tensor-comp-linear-constr} (\cref{thm:approximation-ratio}). To the best of our knowledge, this is the first method with a provable worst-case guarantee that directly minimizes the tensor nuclear norm (rather than some computational surrogates) with linear constraints.\label{item:c2}
    \item \emph{An adaptive, asymptotically optimal algorithm.} \cref{alg:adaptive-H} constructs the point set adaptively and returns an exact optimum upon finite termination and an asymptotically optimal solution otherwise (\cref{thm:dual-app,thm:alg-converge,thm:dual-dis}).\label{item:c3}
    \item We perform numerical experiments of tensor completions by applying on our main model and the adaptive algorithm. Based on synthetic and highway traffic data, the results obviously outperform state-of-the-art methods.
\end{enumerate}

\subsection{Related literature}\label{subsec:literature}
\emph{Tensor completion via rank minimization and tractable surrogates.}
Low-rank tensor recovery is classically formulated as rank minimization. 
For matrices, the nuclear norm provides the canonical convex surrogate for the rank. It is the convex envelope of the rank function on the spectral-norm unit ball \citep{fazel2002matrix}, and nuclear-norm minimization admits exact-recovery guarantees under general linear measurements \citep{recht2010guaranteed},
in particular for matrix completions \citep{CandesRecht2009,candes2010power}; see \citet{ref:davenport2016overview} for an overview.

The tensor low-rank recovery is much more challenging. 
While one would ideally recover a low-rank tensor by minimizing its CP rank subject to observations, computing the CP rank is NP-hard \citep{hastad1990tensor,hillar2013most}. 
The tensor nuclear norm provides a natural convex surrogate for the CP rank \citep{ref:lim2013blind,ref:friedland2018nuclear}. \citet{yuan2016tensor} has established recovery guarantees for tensor completions by directly minimizing this nuclear norm, showing advantages over matricization-based approaches.

Computing the tensor nuclear norm itself is NP-hard \citep{ref:friedland2018nuclear} and this has triggered a broad literature on more tractable surrogates of tensor rank, including sums of nuclear norms of tensor unfoldings \citep{ref:liu2012tensor}, matricization-based convex relaxations \citep{pmlr-v32-mu14}, t-SVD-based tubal nuclear norms \citep{zhang2014novel,ref:zhang2016exact}, Kronecker-basis-representation sparsity measures \citep{8000407}, M-ranks of tensors~\citep{jiang2018low}, and slice-based learning models \citep{farias2019learning}; see also \citet{ref:chen2021low} for an application to spatiotemporal traffic completion, and 
\citet{shen2022smooth} for an application to video background/foreground separation with missing pixels.
These tractable surrogates do not provide approximation recovery guarantees relative to \cref{prob:tensor-comp-linear-constr}.
In contrast, in this paper, we study the vanilla tensor nuclear norm, and develop algorithms with performance guarantees relative to this objective; the aforementioned methods serve as numerical benchmarks in \cref{sec:numerical}.

\emph{Computation and approximation of tensor norms.}
Both the tensor spectral norm and nuclear norm are NP-hard to compute \citep{he2010approximation,ref:friedland2018nuclear}. Algorithms for homogeneous polynomial optimization and best rank-one approximation \citep{he2010approximation,so2011approximation,he2014probability,friedland2013best,anandkumar2017analyzing} provide spectral-norm approximation tools, including the subroutines used in \cref{alg:adaptive-H}. \citet{nie2017symmetric} gives the first method for computing the tensor nuclear norm for symmetric tensor, \citet{hu2025complexity} refine the complexity analysis for third-order tensors with one fixed dimension and provide a fully polynomial-time approximation scheme, and \citet{he2026nuclear} propose a framework to compute the tensor nuclear norm via duality. Closest to our fixed discretization, \citet{he2023approximation} approximate tensor spectral and nuclear norms using sphere coverings by $\tau$-hitting sets. We establish that a tensor-dependent finite set can represent the nuclear norm exactly and, more importantly, extend the computational framework from evaluating a given tensor's norm to minimizing it with general linear constraints.

\subsection{Organization and notation}
The remainder of the paper is organized as follows.
\cref{sec:nuc-norm} develops the finite SDP representation of the tensor nuclear norm (\cref{thm:nuc-norm-finite-rep}).
\cref{sec:compute} builds the hitting-set discretization and proves the approximation guarantee (\cref{thm:approximation-ratio}).
\cref{sec:dual} develops the dual perspective and the adaptive algorithm with its convergence analysis (\cref{thm:dual-dis,thm:dual-app,thm:alg-converge}).
\cref{sec:numerical} reports numerical experiments on synthetic and highway traffic data, and \cref{sec:conclusion} concludes.
Proofs not given in the main text appear in the appendix.

Throughout, we use boldface calligraphic letters (e.g., $\T,\A,\Z$) for tensors, capital Greek letters (e.g., $\Theta,\Upsilon$) for sets, boldface capital letters (e.g., $\U,\V,\mathbf{X}$) for matrices, and boldface lowercase letters (e.g., $\x,\y,\bm\lambda$) for vectors.
We reserve the index $i\in[m]$ for discretization points, $\ell\in[d]$ for linear constraints, $r\in[R]$ for rank-one terms, $t$ for algorithm iterations, and $j\in[n_1]$ for coordinates.
$\mathbb{B}^{n_1}:=\{\x\in\R^{n_1}:\|\x\|=1\}$ denotes the unit sphere in $\R^{n_1}$.

\section{ SDP Reformulation of nuclear norm 
}
\label{sec:nuc-norm} 

In this section, we propose new formulations to compute the tensor nuclear norm. They form the basis for the analysis of the original problem \cref{prob:tensor-comp-linear-constr} in later sections.
To start with, we give a formal definition of tensor nuclear norm, tensor nuclear decomposition and tensor nuclear rank.

\begin{definition}[Tensor nuclear norm]
    The nuclear norm of a tensor $\mathcal{T} \in \mathbb{R}^{n_1 \times n_2 \times \cdots \times n_d}$ is 
\begin{align}\label{tensor-nuclear-norm}
    \|\T\|_*:=\min_{\lambda_r,\x_r, R} \left\{\sum_{r=1}^R|\lambda_r|: \T = \sum_{r=1}^R \lambda_r \x_1^{(r)} \otimes \x_2^{(r)} \otimes\cdots\otimes \x_d^{(r)}, R \in \mathbb{N},~\|\x_{\ell}^{(r)}\|=1 \forall\,\ell\in[d], r\in[R] \right\},
\end{align}
where %
$\otimes$ denotes the vector outer product.
\hfill $\triangleleft$
\end{definition}

\begin{definition}[Tensor nuclear decomposition]
    If a tensor $\mathcal{T} \in \mathbb{R}^{n_1 \times n_2 \times \cdots \times n_d}$, written as a rank-one decomposition
    \begin{align}
     &\T \;=\; \sum_{r=1}^R \lambda_r \x_1^{(r)} \otimes \x_2^{(r)} \otimes \cdots \otimes\x_d^{(r)},~\label{def:nuclear-decomp}
\end{align}
satisfies that $\|\T\|_* = \sum_{r=1}^R |\lambda_r|$, then \cref{def:nuclear-decomp} is called a nuclear decomposition of $\T$.
\hfill $\triangleleft$
\end{definition}

\begin{definition}[Tensor nuclear rank]
    The nuclear rank of a tensor $\T \in \R^{n_1 \times n_2 \times \cdots \times n_d}$ is defined by
        \begin{align}
        \label{def:nuclear-rank}
        \operatorname{rank}_*(\T) := \min\left\{R\in\mathbb{N}: \T = \sum_{r=1}^R \lambda_r \x_1^{(r)} \otimes\cdots\otimes \x_d^{(r)}, \|\T\|_*= \sum_{r=1}^R|\lambda_r|, ~\|\x_{\ell}^{(r)}\|=1 \forall\,\ell\in[d], r\in[R]\right\}.
    \end{align}
    \hfill $\triangleleft$
\end{definition}

In the rest of this paper, we establish our results for the case $d=3$. All these results can be easily extended to higher orders.

We first provide an upper bound on the nuclear rank for third-order tensors.
The proof of \cref{lemma:nuclear-rank-ub} is given in \cref{app:nuclear-rank-ub}.
\begin{lemma}\label{lemma:nuclear-rank-ub}
    $\operatorname{rank}_*(\T) \leq n_1n_2n_3$ for any $\T \in \R^{n_1\times n_2\times n_3}$. 
\end{lemma}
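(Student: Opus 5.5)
We will use a Carathéodory argument on the convex hull of rank-one unit tensors, viewed in the space $\R^{n_1\times n_2\times n_3}\cong\R^{N}$ with $N:=n_1n_2n_3$. Let
\[
\mathcal{S}:=\{\x\otimes\y\otimes\z:\ \|\x\|=\|\y\|=\|\z\|=1\}.
\]
This set is compact, being the continuous image of a product of spheres, and it is symmetric, since $-\x\otimes\y\otimes\z=(-\x)\otimes\y\otimes\z$. Hence the definition \cref{tensor-nuclear-norm} says that $\|\T\|_*$ is the gauge of $C:=\operatorname{conv}(\mathcal{S})$. That is, $\|\T\|_*=\min\{t\ge0:\T\in tC\}$, and the convex hull of a compact set in finite dimensions is compact, so the minimum is attained.

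\textbf{Step 1 (existence of a nuclear decomposition).} If $\T=\Zero$, the claim is trivial with $R=0$ or $R=1$ and $\lambda_1=0$, so assume $\T\neq \Zero$ and set $t=\|\T\|_*>0$. First we must check that the infimum in \cref{tensor-nuclear-norm} really equals the gauge of $C$ and is attained. Any decomposition with $\sum_r|\lambda_r|=s$ can be rewritten, absorbing signs into $\x_1^{(r)}$, as $\T=s\sum_r(|\lambda_r|/s)\,\mathbf{S}_r$ with $\mathbf{S}_r\in\mathcal{S}$. So $\T/s\in C$, and conversely every element of $sC$ yields such a decomposition. By compactness of $C$ we therefore have $\T/t\in C$.

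\textbf{Step 2 (reduce the count to $N$).} The point $\T/t$ lies on the boundary of $C$: if it were interior, then $(1+\epsilon)\T/t\in C$ for some $\epsilon>0$, contradicting minimality of $t$. Carathéodory's theorem in $\R^N$ gives at most $N+1$ points of $\mathcal{S}$ whose convex hull contains $\T/t$. To save one point, take a supporting hyperplane $H=\{\mathbf{X}:\la \Z,\mathbf{X}\ra=1\}$ of $C$ at $\T/t$, where $\Z$ is a spectral-norm-one dual certificate. Any convex combination of points of $\mathcal{S}$ that equals $\T/t$ must put positive weight only on points of $\mathcal{S}\cap H$, because each point of $C$ satisfies $\la\Z,\cdot\ra\le 1$ and equality is forced on average. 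Therefore $\T/t\in\operatorname{conv}(\mathcal{S}\cap H)$, and $H$ is an affine set of dimension $N-1$. Carathéodory applied within $H$ then gives at most $(N-1)+1=N$ points $\mathbf{S}_1,\dots,\mathbf{S}_R$ with $R\le N$ and weights $\mu_r\ge0$ summing to $1$.

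\textbf{Step 3 (conclusion).} Set $\lambda_r=t\mu_r$. Then $\T=\sum_{r=1}^R\lambda_r\mathbf{S}_r$ with $\sum_r|\lambda_r|=t=\|\T\|_*$, which is a nuclear decomposition of length at most $n_1n_2n_3$. Hence $\operatorname{rank}_*(\T)\le n_1n_2n_3$.

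The main obstacle is the boundary argument in Step 2, which is what lowers the naive Carathéodory bound of $N+1$ to $N$. We need to verify carefully that the decomposition is forced into the face $\mathcal{S}\cap H$; as shown above, this follows from the averaging of $\la\Z,\cdot\ra$ over the combination. The existence of the supporting hyperplane follows from $C$ being convex with nonempty interior: $C$ is full-dimensional because $\mathcal{S}$ spans $\R^N$ and is symmetric. Attainment in Step 1 is routine by compactness.
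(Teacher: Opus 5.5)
Your proof is correct, but it gets from $N+1$ down to $N=n_1n_2n_3$ by a different mechanism than the paper. The paper also writes $\T/\|\T\|_*$ as a point of $K=\operatorname{conv}(A)$. It cites Friedland--Lim for attainment where you argue by compactness. It then invokes the Fenchel--Eggleston extension of Carath\'eodory's theorem: because the set $A$ of unit rank-one tensors is connected, \emph{every} point of $\operatorname{conv}(A)$, interior or not, is a convex combination of at most $N$ points of $A$.

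You use only the classical Carath\'eodory theorem and extract the saving from optimality. Since $\T/t$ lies on the boundary of $C$, any representation is confined to the exposed face $\mathcal{S}\cap H$ cut out by a spectral-norm-one certificate $\Z$, and that face sits in an $(N-1)$-dimensional affine set.

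What each route buys:
\begin{itemize}
\item The paper's route is shorter, but it rests on a less standard theorem whose connectedness hypothesis is left implicit. That hypothesis holds whenever some $n_i\ge 2$, since signs can be absorbed into a factor whose sphere is connected. For $n_1=n_2=n_3=1$ the set $A=\{\pm1\}$ is disconnected, though the bound is then trivial.
\item Your route is elementary and self-contained, and it gives extra information. Every term of the resulting decomposition satisfies $\la\Z,\x_1^{(r)}\otimes\x_2^{(r)}\otimes\x_3^{(r)}\ra=\|\Z\|_\sigma=1$. So $\Z$ is an optimal dual certificate, and each first-mode factor $\x_1^{(r)}$ maximizes $\|\Z(\x)\|_\sigma$. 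This complementary-slackness structure fits naturally with \cref{thm:nuc-norm-finite-rep-2} and \cref{sec:dual}.
\end{itemize}

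One detail is worth stating explicitly. Normalizing the supporting hyperplane to level $1$ needs $\Zero\in\operatorname{int}C$. This follows from $C\supseteq\operatorname{conv}\{\pm\mathbf{e}_i\otimes\mathbf{e}_j\otimes\mathbf{e}_k\}$.
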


\begin{remark}
This bound is also implied by \citet[Proposition~4.3]{ref:friedland2018nuclear}. We include a short proof based on Carath\'{e}odory's theorem to make the finite-support argument explicit. The same reasoning applies to tensors of higher order.
\end{remark}

To obtain an SDP, we keep the first factor of each rank-one term and combine the remaining two factors into a matrix. The next theorem shows that this grouping preserves the nuclear norm when the first-mode directions are chosen appropriately.
\begin{theorem}\label{thm:nuc-norm-finite-rep}
    For any $\T \in \R^{n_1\times n_2\times n_3}$, there exists a set $\Theta\subseteq \mathbb{B}^{n_1}$ such that $|\Theta| \leq n_1n_2n_3$ and
    \begin{align}
        \|\T\|_* = \min_{\bm\Lambda} \left\{\frac{1}{2}\sum_{\x\in\Theta} \tr(\bm\Lambda(\x)): \T=\sum_{\x\in\Theta}\x\otimes\bm\Gamma(\x),\bm\Lambda(\x) = \begin{bmatrix}
        \bm\Lambda^{(1)}(\x) &\bm\Gamma(\x)\\
        \bm\Gamma^\top(\x) &\bm\Lambda^{(2)}(\x)
    \end{bmatrix}\succeq \bm 0~~\text{for all $\x\in \Theta$}
    \right\}\label{vD-eq1}
    \end{align}
\end{theorem}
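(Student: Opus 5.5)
We take $\Theta$ from a nuclear decomposition of $\T$ and prove two inequalities: the SDP value is at most $\|\T\|_*$, and it is at least $\|\T\|_*$.

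\textbf{Choice of $\Theta$.} By \cref{lemma:nuclear-rank-ub}, $\T$ has a nuclear decomposition $\T=\sum_{r=1}^R\lambda_r\x^{(r)}\otimes\y^{(r)}\otimes\z^{(r)}$ with $R\le n_1n_2n_3$, unit factors, and $\sum_r|\lambda_r|=\|\T\|_*$. Flipping signs of $\x^{(r)}$ if necessary, we may take $\lambda_r\ge 0$. Set $\Theta:=\{\x^{(1)},\dots,\x^{(R)}\}$; after merging repeated directions, $|\Theta|\le n_1n_2n_3$. Denote the right-hand side of \cref{vD-eq1} by $v(\Theta)$.

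\textbf{Upper bound $v(\Theta)\le\|\T\|_*$.} For each $\x\in\Theta$, let $\bm\Gamma(\x):=\sum_{r:\x^{(r)}=\x}\lambda_r\y^{(r)}\z^{(r)\top}$. Then $\sum_{\x\in\Theta}\x\otimes\bm\Gamma(\x)=\T$. Choose
\[
\bm\Lambda^{(1)}(\x)=\sum_{r:\x^{(r)}=\x}\lambda_r\y^{(r)}\y^{(r)\top},\qquad \bm\Lambda^{(2)}(\x)=\sum_{r:\x^{(r)}=\x}\lambda_r\z^{(r)}\z^{(r)\top}.
\]
With these choices $\bm\Lambda(\x)=\sum_r\lambda_r(\y^{(r)};\z^{(r)})(\y^{(r)};\z^{(r)})^\top\succeq0$, and $\tr\bm\Lambda(\x)=2\sum_{r:\x^{(r)}=\x}\lambda_r$. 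Summing over $\x\in\Theta$ gives objective value $\sum_r\lambda_r=\|\T\|_*$, so this feasible point shows $v(\Theta)\le\|\T\|_*$.

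\textbf{Lower bound $v(\Theta)\ge\|\T\|_*$.} This holds for any finite $\Theta\subseteq\mathbb{B}^{n_1}$. Take any feasible $\bm\Lambda$. By the standard matrix fact, $\bm\Lambda(\x)\succeq0$ implies $\tfrac12\tr\bm\Lambda(\x)\ge\|\bm\Gamma(\x)\|_*$ (matrix nuclear norm). To see this, write $\bm\Gamma=\U\bm\Sigma\V^\top$ and test $\bm\Lambda(\x)$ against the PSD matrix $\begin{bmatrix}\U\U^\top&-\U\V^\top\\-\V\U^\top&\V\V^\top\end{bmatrix}$; this gives $\tr(\U^\top\bm\Lambda^{(1)}\U)+\tr(\V^\top\bm\Lambda^{(2)}\V)\ge2\tr\bm\Sigma$, and the left-hand side is at most $\tr\bm\Lambda^{(1)}+\tr\bm\Lambda^{(2)}$. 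Next, take an SVD $\bm\Gamma(\x)=\sum_k\sigma_k\mathbf{u}_k\mathbf{v}_k^\top$. Then $\x\otimes\bm\Gamma(\x)=\sum_k\sigma_k\,\x\otimes\mathbf{u}_k\otimes\mathbf{v}_k$ is a rank-one decomposition with unit factors, since $\|\x\|=1$. Concatenating these over $\x\in\Theta$ yields a decomposition of $\T$ with coefficient sum $\sum_{\x}\|\bm\Gamma(\x)\|_*\le\tfrac12\sum_{\x}\tr\bm\Lambda(\x)$. By definition \cref{tensor-nuclear-norm}, this sum is at least $\|\T\|_*$. Hence $v(\Theta)\ge\|\T\|_*$.

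Combining the two bounds gives equality, and the minimum in \cref{vD-eq1} is attained by the construction in the upper bound. The main point needing care is the matrix inequality $\tfrac12\tr\bm\Lambda\ge\|\bm\Gamma\|_*$ for PSD block matrices; the argument above via the test matrix settles it. The only other subtlety is merging duplicate directions in $\Theta$, which the grouped definition of $\bm\Gamma(\x)$ already handles.
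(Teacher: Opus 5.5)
Your proof is correct and follows essentially the same route as the paper: $\Theta$ is taken from the first-mode factors of a nuclear decomposition supplied by \cref{lemma:nuclear-rank-ub}. The lower bound comes from $\tfrac12\tr\bm\Lambda(\x)\ge\|\bm\Gamma(\x)\|_*$ (the content of \cref{lemma:matrix-nuclear-norm}) followed by an SVD of each $\bm\Gamma(\x)$, and the upper bound comes from realizing the nuclear decomposition itself as a feasible point. The only difference is that you bypass the paper's intermediate problem $\min\sum_r\|\bm W^{(r)}\|_*$ and write the PSD certificate $\sum_r\lambda_r(\y^{(r)};\z^{(r)})(\y^{(r)};\z^{(r)})^\top$ directly, which also makes the sign normalization, the merging of repeated directions, and attainment of the minimum explicit.
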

The relation in \cref{vD-eq1} reduces the computation of a tensor nuclear norm to the addition of a series of matrix traces and these matrices are all positive semi-definite (PSD) matrices.
Note that when $\Theta$ is given, the minimization problem in \cref{vD-eq1} is a semi-definite program, which can be computed by a solver. 
We study \cref{prob:tensor-comp-linear-constr} based on the nuclear representation in \cref{thm:nuc-norm-finite-rep} in this paper.

\cref{thm:nuc-norm-finite-rep} essentially  builds the connection between tensor nuclear norm to matrix trace and a series of PSD constraints.
To prove \cref{thm:nuc-norm-finite-rep}, we use tensor nuclear norm as a bridge. In the next lemma, we first show that there is a connection between matrix nuclear norm and matrix trace. 
\begin{lemma}\label{lemma:matrix-nuclear-norm} Given any $\bm\Gamma\in\R^{n_1\times n_2}$, let $\bm\Gamma=\P^\top \D \bQ$ be its SVD and $\omega_1,\dots,\omega_{n_1}$ be its singular values, i.e., $\D=[\mathrm{diag}(\omega_1,\dots,\omega_{n_1},0,\dots,0)]$. Then the optimization problem parameterized by $\bm\Gamma$
\begin{equation}\label{eq:inf-trace-sum}
    \min_{\U,\V}\left\{\frac{1}{2}\left(\mathrm{tr}(\U)+\mathrm{tr}(\V)\right):\begin{bmatrix}
        \U &\bm\Gamma\\
        \bm\Gamma^\top &\V
    \end{bmatrix}\succeq \bm 0,~\U\in\R^{n_1\times n_1}, \V\in\R^{n_2\times n_2}\right\}
\end{equation}
is solved by
\begin{equation}\label{UV-def}
      \U^\star(\bm\Gamma) = \mathbf{P}^\top\mathrm{diag}(\omega_1,\dots,\omega_{n_1})\mathbf{P}, ~\V^\star(\bm\Gamma) = \mathbf{Q}^\top\mathrm{diag}(\omega_1,\dots,\omega_{n_1},0,\dots,0)\mathbf{Q}
\end{equation}
and $\text{val}\cref{eq:inf-trace-sum}= \|\bm\Gamma\|_*$.
\end{lemma}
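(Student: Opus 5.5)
The proof has two halves: a lower bound valid for every feasible pair $(\U,\V)$ in \cref{eq:inf-trace-sum}, and a check that the explicit pair \cref{UV-def} is feasible and attains that bound. I assume $n_1\le n_2$, as the form of $\D$ suggests. Write the SVD as $\bm\Gamma=\P^\top\D\bQ$ with $\P\in\R^{n_1\times n_1}$ and $\bQ\in\R^{n_2\times n_2}$ orthogonal, so that $\D=[\mathrm{diag}(\omega)\;\;\Zero]$.

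\textbf{Lower bound.} Fix any feasible $(\U,\V)$. The plan is to test the PSD block matrix against the vectors $(\P^\top\mathbf{e}_k,\,-\bQ^\top\mathbf{e}_k)$ for $k\in[n_1]$. For $\mathbf{u}=\P^\top\mathbf{e}_k$ and $\mathbf{v}=\bQ^\top\mathbf{e}_k$, the quadratic form equals
\[
\mathbf{u}^\top\U\mathbf{u}+\mathbf{v}^\top\V\mathbf{v}-2\,\mathbf{u}^\top\bm\Gamma\mathbf{v}\;\ge\;0 .
\]
Since $\mathbf{u}^\top\bm\Gamma\mathbf{v}=\mathbf{e}_k^\top\D\mathbf{e}_k=\omega_k$, this gives $(\P\U\P^\top)_{kk}+(\bQ\V\bQ^\top)_{kk}\ge 2\omega_k$.

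Now sum over $k\in[n_1]$. For the remaining indices $k>n_1$, the diagonal entries $(\bQ\V\bQ^\top)_{kk}$ are nonnegative because $\V\succeq 0$. Orthogonal conjugation preserves the trace, so
\[
\tr(\U)+\tr(\V)\;\ge\;2\sum_k\omega_k\;=\;2\|\bm\Gamma\|_* .
\]
An equivalent route is the trace-duality argument $\tr\!\left(\begin{bmatrix}\U&\bm\Gamma\\ \bm\Gamma^\top&\V\end{bmatrix}\mathbf{W}\right)\ge 0$ with $\mathbf{W}=\begin{bmatrix}\I&-\mathbf{K}\\-\mathbf{K}^\top&\mathbf{K}^\top\mathbf{K}\end{bmatrix}\succeq0$, where $\mathbf{K}=\P^\top[\I\;\;\Zero]\bQ$ is the partial isometry with $\tr(\mathbf{K}^\top\bm\Gamma)=\|\bm\Gamma\|_*$.

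\textbf{Attainment.} Plug in $\U^\star=\P^\top\mathrm{diag}(\omega)\P$ and $\V^\star=\bQ^\top\mathrm{diag}(\omega,0)\bQ$. Then
\[
\begin{bmatrix}\U^\star&\bm\Gamma\\ \bm\Gamma^\top&\V^\star\end{bmatrix}
=\begin{bmatrix}\P^\top&\\&\bQ^\top\end{bmatrix}
\begin{bmatrix}\mathrm{diag}(\omega)&\D\\ \D^\top&\mathrm{diag}(\omega,0)\end{bmatrix}
\begin{bmatrix}\P&\\&\bQ\end{bmatrix}.
\]
After permuting coordinates, the middle matrix is block diagonal, with $2\times2$ blocks $\begin{bmatrix}\omega_k&\omega_k\\ \omega_k&\omega_k\end{bmatrix}\succeq0$ and zero blocks. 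Hence the pair $(\U^\star,\V^\star)$ is feasible. Its objective value is $\tfrac12\left(\sum_k\omega_k+\sum_k\omega_k\right)=\|\bm\Gamma\|_*$, which meets the lower bound, so the minimum is attained there and equals $\|\bm\Gamma\|_*$.

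\textbf{Main obstacle.} The proof is routine; the only care needed is bookkeeping the rectangular SVD and the transposition convention $\bm\Gamma=\P^\top\D\bQ$, so that the test vectors produce exactly $\omega_k$ with the correct sign. If $n_1>n_2$, the same argument works with the roles of $\U$ and $\V$ swapped.
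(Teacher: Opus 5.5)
Your proposal is correct and follows essentially the same route as the paper. Both arguments test the PSD block against $(\P^\top\mathbf{e}_k,-\bQ^\top\mathbf{e}_k)$, which is the paper's $\bm\zeta_j=(\p_j,-\q_j)$, sum to get $\tr(\U)+\tr(\V)\ge 2\|\bm\Gamma\|_*$, and certify feasibility of $(\U^\star,\V^\star)$ by a factorization. The only cosmetic difference is that you handle the leftover diagonal terms using nonnegativity of the diagonal of $\bQ\V\bQ^\top$, whereas the paper invokes $\tr(\U\mathbf{W})\le\tr(\U)$ for $\Zero\preceq\mathbf{W}\preceq\I$.
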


\cref{lemma:matrix-nuclear-norm} shows that the matrix nuclear norm can be calculated by problem \cref{eq:inf-trace-sum} with PSD constraints.
The proof of \cref{lemma:matrix-nuclear-norm} is deferred to \cref{app:matrix-nuclear-norm}.
With \cref{lemma:matrix-nuclear-norm}, we are ready to prove \cref{thm:nuc-norm-finite-rep}.
\begin{proof}[Proof of \cref{thm:nuc-norm-finite-rep}]
    The proof idea is as follows.
    For any given $\Theta=\{\tilde{\x}^{(1)},\dots,\tilde{\x}^{(R)}\}$, we use the following problem as an intermediary:
    \begin{align}
        \min_{W^{(r)}}\left\{\sum_{r=1}^R\|\bm W^{(r)}\|_*: \T = \sum_{r=1}^R \tilde{\x}^{(r)} \otimes {\bm W}^{(r)},~\bm W^{(r)}\in \R^{n_2\times n_3}\right\}.\label{eq:nuclear-norm-no-rank}
    \end{align}
    We first prove 
    that  $\text{val}\cref{vD-eq1}=\text{val}\cref{eq:nuclear-norm-no-rank}$ for any given $\Theta$. Then we prove that there exists $\Theta$ such that $\|\T\|_*=\text{val}\cref{eq:nuclear-norm-no-rank}$ and the result follows.
    
    Now we prove $\text{val}\cref{vD-eq1}=\text{val}\cref{eq:nuclear-norm-no-rank}$ for any given $\Theta$.
    To see this, we note that by \cref{lemma:matrix-nuclear-norm}, for any matrix $\bm W\in \R^{p\times q}$, it holds that 
    \[
        \|\bm W\|_* = \min_{\U,\V}\left\{\frac{1}{2}\left(\mathrm{tr}(\U)+\mathrm{tr}(\V)\right):\begin{bmatrix}
        \U &\bm W\\
        \bm W^\top &\V
    \end{bmatrix}\succeq \bm 0,~\U\in\R^{p\times p}, \V\in\R^{q\times q}\right\}.
    \]
    Then \cref{eq:nuclear-norm-no-rank} is equivalent to
    \begin{align}
        \min_{\mathbf{U},\mathbf{V},\bm W^{(r)}}\left\{\frac{1}{2}\sum_{r=1}^R\left(\tr(\mathbf{U}^{(r)}) + \tr(\mathbf{V}^{(r)})\right): \T = \sum_{r=1}^R \tilde{\x}^{(r)} \otimes {\bm W}^{(r)},\begin{bmatrix}
        \U^{(r)} &\bm W^{(r)}\\
        {\bm W^{(r)}}^\top &\V^{(r)}
    \end{bmatrix}\succeq \bm 0\right\}.\label{eq:nuclear-norm-no-rank-sdp}
    \end{align}
    Note that \cref{eq:nuclear-norm-no-rank-sdp} is equivalent to \cref{vD-eq1}. Thus $\text{val}\cref{vD-eq1}=\text{val}\cref{eq:nuclear-norm-no-rank-sdp}=\text{val}\cref{eq:nuclear-norm-no-rank}$.
    
    Now we prove that there exists $\Theta$ such that $\|\T\|_*=\text{val}\cref{eq:nuclear-norm-no-rank}$. By \cref{lemma:nuclear-rank-ub} and the definition of nuclear norm, it holds that 
    \begingroup\small
\begin{align}
        \|\T\|_* \;=\; &\min_{\bm\lambda_r,\x_1^{(r)},\x_2^{(r)},\x_3^{(r)},R}\left\{\sum_{r=1}^R|\lambda_r|: \T = \sum_{r=1}^R \lambda_r \x_1^{(r)} \otimes \x_2^{(r)} \otimes \x_3^{(r)},~ 
    \|\x_{d}^{(r)}\|=1,~d\in\{1,2,3\}, R \in \mathbb{N},R\leq n_1n_2n_3\right\}\nonumber\\
    \;=\; & \min_{\x_1^{(r)},\bm W^{(r)},R}\left\{\sum_{r=1}^R\|\bm W^{(r)}\|_*: \T = \sum_{r=1}^R \x_1^{(r)} \otimes {\bm W}^{(r)},~ 
    \|\x_{1}^{(r)}\|=1, \operatorname{rank}(\bm W^{(r)})=1, R \in \mathbb{N},R\leq n_1n_2n_3\right\},\label{eq:nuclear-norm-rank-1}
    \end{align}
\endgroup
    where in the second equality, we use the fact that $\|\lambda_r \x_2^{(r)}\otimes \x_3^{(r)}\|_* = |\lambda_r|$. 
    Let $\{{\x_1^*}^{(r)},r=1,\dots,R\}$ be a set of optimal solution to \cref{eq:nuclear-norm-rank-1}. Let $\Theta=\{{\x_1^*}^{(1)},\dots,{\x_1^*}^{(R)}\}$.
    In the following, we show that under $\Theta$ constructed in this way, $\text{val}\cref{eq:nuclear-norm-rank-1}=\text{val}\cref{eq:nuclear-norm-no-rank}$.
    It is straightforward that $\text{val}\cref{eq:nuclear-norm-no-rank}\leq \text{val}\cref{eq:nuclear-norm-rank-1}$, since the feasible region of \cref{eq:nuclear-norm-rank-1} is a subset of that of \cref{eq:nuclear-norm-no-rank}. Now we show that $\text{val}\cref{eq:nuclear-norm-no-rank}\geq \text{val}\cref{eq:nuclear-norm-rank-1}$. First note that $\text{val}\cref{eq:nuclear-norm-rank-1}=\text{val}\cref{tensor-nuclear-norm}$ according to the definition of tensor nuclear norm. Next, we will show that for every feasible solution of \cref{eq:nuclear-norm-no-rank}, we can construct a feasible solution of \cref{tensor-nuclear-norm}, and they share the same objective value. 
    
    Let $\bm W^{(r)}, r=1,\dots,R$ be a feasible solution to \cref{eq:nuclear-norm-no-rank}. Then it holds that 
    $\T = \sum_{r=1}^R {\x_1^*}^{(r)} \otimes {\bm W}^{(r)}$. 
    For each $r$, take a SVD decomposition of $\bm W^{(r)}$, i.e., 
    \[
        \bm W^{(r)} = \sum_{j=1}^{n_2} \sigma_{r,j} \mathbf{u}_{r,j} \otimes \mathbf{v}_{r,j},~\sigma_{r,j} \geq 0,~ \|\mathbf{u}_{r,j}\| = \|\mathbf{v}_{r,j}\|=1,
    \]
    where $\|\bm W^{(r)}\|_*=\sum_{j=1}^{n_2} \sigma_{r,j}$. Then it holds that 
    \[
        \T = \sum_{r=1}^R \sum_{j=1}^{n_2} \sigma_{r,j} {\x_1^*}^{(r)} \otimes \mathbf{u}_{r,j} \otimes \mathbf{v}_{r,j}.
    \]
    Note that $\{{\x_1^*}^{(r)}, \mathbf{u}_{r,j}, \mathbf{v}_{r,j}\}$ form a feasible solution to the minimization problem in \cref{tensor-nuclear-norm} and the objective value of \cref{tensor-nuclear-norm} equals $\sum_{r=1}^R \sum_{j=1}^{n_2} \sigma_{r,j}$ under this solution. 
    Because $\sum_{r=1}^R \|\bm W^{(r)}\|_* = \sum_{r=1}^R \sum_{j=1}^{n_2} \sigma_{r,j}$, we prove that for every feasible solution of \cref{eq:nuclear-norm-no-rank}, we can construct a feasible solution of \cref{tensor-nuclear-norm}, and they share the same objective value. 
    
\end{proof}

According to the proof of \cref{thm:nuc-norm-finite-rep}, the tensor nuclear norm can be represented by the following problem where $\Theta$ is a decision variable:
    \begingroup\small
\begin{align}
        \|\T\|_* = \min_{\bm\Lambda,\Theta\subset \mathbb{B}^{n_1}} \left\{\frac{1}{2}\sum_{\x\in\Theta} \tr(\bm\Lambda(\x)): \T=\sum_{\x\in\Theta}\x\otimes\bm\Gamma(\x),\bm\Lambda(\x) = \begin{bmatrix}
        \bm\Lambda^{(1)}(\x) &\bm\Gamma(\x)\\
        \bm\Gamma^\top(\x) &\bm\Lambda^{(2)}(\x)
    \end{bmatrix}\succeq \bm 0~~\text{for $\x\in\Theta$}, |\Theta| \leq n_1n_2n_3.
    \right\}\tag{P-SDP}\label{vD-eq2}
    \end{align}
\endgroup
    \hfill

\cref{thm:nuc-norm-finite-rep} shows that the tensor nuclear norm can be obtained by solving \cref{vD-eq1}, which has a linear objective and a finite set of constraints with supports on $\Theta$. 
However, finding the specific subset $\Theta$ in \cref{thm:nuc-norm-finite-rep} is not a trivial task. 
Therefore, in this paper, we make a compromise and approximate~\cref{vD-eq1} by assuming $\Theta$ is given to be $\{\x_1,\dots,\x_m\}$ and consider the following problem
\begin{align}
    \min_{\bm\Lambda_1,\dots,\bm\Lambda_m}\;\; &\frac{1}{2}\sum_{i=1}^{m} \tr\big(\bm\Lambda_i\big)\nonumber\\
    \text{s.t.} \;\;&\bm\Lambda_i=\begin{bmatrix}
\bm\Lambda^{(1)}_i &\bm\Gamma_i\\
\bm\Gamma_i^\top&\bm\Lambda^{(2)}_i
\end{bmatrix}\succeq \bm 0~~\forall\, i=1,\dots,m,\nonumber\\
    &\T=\sum_{i=1}^m \x_i\otimes\bm\Gamma_i.\label{prob:vD-app}
\end{align}
Note that $\text{val}\cref{prob:vD-app}$ may not be the same as $\text{val}\cref{vD-eq1}$, and the approximation performance heavily relies on how $\{\x_1,\dots,\x_m\}$ are selected.
We show two ways of construction in this work and show their performance in later sections.

\paragraph{Representation through the dual norm.}
{We now derive \cref{prob:nuclear-norm-SDP-formulation} and explain why its infinitely many constraints can also be reduced to a finite set for each given tensor.}
\begin{definition}[Tensor spectral norm]
The \emph{spectral norm} of a tensor $\Z\in\R^{n_1\times\cdots\times n_d}$ is
\[
\|\Z\|_\sigma:=\max_{\|\x_1\|=\cdots=\|\x_d\|=1}
\la\Z,\x_1\otimes\cdots\otimes\x_d\ra.
\]
\end{definition}
{The spectral norm is the dual norm to the nuclear norm, i.e., $\|\T\|_\sigma=\max_{\|\Z\|_*\leq1}\la\T,\Z\ra$ and $\|\T\|_*=\max_{\|\Z\|_\sigma\leq1}\la\T,\Z\ra$ \citep[Lemma 2.3]{lipartition}.

Given a third-order tensor $\Z=(\BZ_1,\dots,\BZ_{n_1})\in\R^{n_1\times n_2\times n_3}$ with slices $\BZ_i\in\R^{n_2\times n_3}$ and a vector $\x=(x_1,\dots,x_{n_1})^\top$, we define the contraction $\Z(\x):=\sum_{i=1}^{n_1}x_i\BZ_i\in\R^{n_2\times n_3}$. Maximizing over the other two unit vectors gives \citep{he2023approximation}}
\begin{equation}\label{eq:tensor-norm-matrix-norm}
\|\Z\|_\sigma=\max_{\|\x\|=1}\max_{\|\y\|=\|\z\|=1}\la\Z,\x\otimes\y\otimes\z\ra
=\max_{\|\x\|=1}\|\Z(\x)\|_\sigma.
\end{equation}
By the Schur complement,
\begin{equation}\label{eq:schur-spec}
\|\Z(\x)\|_\sigma\leq1\ \Longleftrightarrow\ 
\I_{n_3}\succeq\Z(\x)^\top\Z(\x)\ \Longleftrightarrow\ 
\begin{bmatrix}\I_{n_2}&\Z(\x)\\\Z(\x)^\top&\I_{n_3}\end{bmatrix}\succeq\bm0.
\end{equation}
{Thus requiring $\|\Z\|_\sigma\leq1$ imposes the semidefinite constraint in \cref{eq:schur-spec} at every sphere point. 
The nuclear norm calculation problem thus becomes
\begin{align*}
     \|\T\|_*= \max_{\Z}\left\{
        \langle \T,\Z\rangle
        :\begin{bmatrix}
        \I_{n_2} & \Z(\x) \\
        \Z(\x)^\top &\I_{n_3}
    \end{bmatrix}\succeq 0\;\forall\,\x\in \mathbb{B}^{n_1}\right\},
\end{align*}
which is \cref{prob:nuclear-norm-SDP-formulation}.
The following theorem shows that a tensor-dependent finite subset suffices to preserve its optimal value. Its proof is in \cref{app:nuc-norm-finite-rep-2}.}
\begin{theorem}\label{thm:nuc-norm-finite-rep-2}
For any tensor $\T$, there exists a set $\Theta\subseteq\mathbb B^{n_1}$ such that $|\Theta|\leq n_1n_2n_3$ and
\begin{align}
\|\T\|_* =\max_{\Z}\left\{\la\T,\Z\ra:
\begin{bmatrix}\I_{n_2}&\Z(\x)\\\Z(\x)^\top&\I_{n_3}\end{bmatrix}\succeq0
\quad\forall\x\in\Theta\right\}.\label{prob:nn-2}
\end{align}
\end{theorem}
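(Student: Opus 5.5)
We take $\Theta$ to be the same set produced in the proof of \cref{thm:nuc-norm-finite-rep}: the first-mode factors $\{{\x_1^*}^{(1)},\dots,{\x_1^*}^{(R)}\}$ of a nuclear decomposition of $\T$ with $R\le n_1n_2n_3$. This decomposition exists by \cref{lemma:nuclear-rank-ub}. The idea is to show that the finite program \cref{prob:nn-2} is exactly the Lagrangian dual of the finite primal SDP \cref{vD-eq1} for this $\Theta$. The value can then be pinned down by a sandwich.

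\textbf{Step 1 (upper bound).} We show $\text{val}\cref{prob:nn-2}\le\|\T\|_*$. Write $\T=\sum_{\x\in\Theta}\x\otimes\bm\Gamma(\x)$ with each $\bm\Gamma(\x)=\lambda_{\x}\,\y_{\x}\otimes\z_{\x}$ a rank-one matrix, as in the nuclear decomposition. For any $\Z$ feasible in \cref{prob:nn-2}, linearity gives
\[
\la\T,\Z\ra=\sum_{\x\in\Theta}\la\Z(\x),\bm\Gamma(\x)\ra .
\]
Each term satisfies
\[
\la\Z(\x),\bm\Gamma(\x)\ra\le\|\Z(\x)\|_\sigma\,\|\bm\Gamma(\x)\|_*\le\|\bm\Gamma(\x)\|_* .
\]
The last inequality holds because the constraint at $\x\in\Theta$ is equivalent to $\|\Z(\x)\|_\sigma\le1$ by \cref{eq:schur-spec}. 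Summing over $\x\in\Theta$ gives $\la\T,\Z\ra\le\sum_{\x\in\Theta}\|\bm\Gamma(\x)\|_*=\sum_r|\lambda_r|=\|\T\|_*$. This direction works for the chosen $\Theta$, and more generally whenever $\text{val}\cref{eq:nuclear-norm-no-rank}=\|\T\|_*$.

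\textbf{Step 2 (lower bound).} Dropping constraints only enlarges the feasible set, so the optimal value of \cref{prob:nn-2} is at least that of \cref{prob:nuclear-norm-SDP-formulation}. The latter equals $\|\T\|_*$ by the duality $\|\T\|_*=\max_{\|\Z\|_\sigma\le1}\la\T,\Z\ra$ together with \cref{eq:tensor-norm-matrix-norm,eq:schur-spec}. Hence $\text{val}\cref{prob:nn-2}\ge\|\T\|_*$, and combining with Step 1 gives equality. The maximum is attained because the relaxed feasible set contains the compact spectral-norm ball. Attainment in the relaxed problem itself follows from Step 1: any maximizing sequence is bounded in objective value, and the supremum equals $\|\T\|_*$, which is already reached by a dual-norm maximizer $\Z^\star$ with $\|\Z^\star\|_\sigma\le1$.

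\textbf{Main obstacle.} The delicate point is Step 1. A general $\Z$ feasible for the finite program can have large spectral norm away from $\Theta$, so we cannot compare it to the full dual problem. The argument must therefore use the specific structure of $\T$ relative to $\Theta$: $\T$ decomposes only along directions in $\Theta$, with total matrix nuclear norm $\|\T\|_*$. This is exactly what the proof of \cref{thm:nuc-norm-finite-rep} supplies.

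If we wanted a genuinely duality-based proof instead, we would write the Lagrangian of \cref{vD-eq1} with multiplier $\Z$ for the linear equality $\T=\sum_{\x\in\Theta}\x\otimes\bm\Gamma(\x)$. Minimizing over each PSD block $\bm\Lambda(\x)$ is bounded below exactly when $\begin{bmatrix}\I&\Z(\x)\\\Z(\x)^\top&\I\end{bmatrix}\succeq0$, which yields \cref{prob:nn-2} as the dual. Weak duality then reproduces Step 1. Step 2 replaces the need to verify a Slater condition for strong duality.
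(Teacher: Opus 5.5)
Your proof is correct and follows the paper's: the same $\Theta$ (first-mode factors of a nuclear decomposition with $R\le n_1n_2n_3$ from \cref{lemma:nuclear-rank-ub}), the lower bound obtained by viewing \cref{prob:nn-2} as a relaxation of \cref{prob:nuclear-norm-SDP-formulation}, and the upper bound from $\|\Z(\x)\|_\sigma\le 1$ on $\Theta$. The paper bounds each rank-one term separately, whereas you group terms by first-mode factor and use $\la\Z(\x),\bm\Gamma(\x)\ra\le\|\Z(\x)\|_\sigma\|\bm\Gamma(\x)\|_*$; if several terms share a first-mode factor, $\bm\Gamma(\x)$ need not be rank one, so your equality $\sum_{\x\in\Theta}\|\bm\Gamma(\x)\|_*=\sum_r|\lambda_r|$ should be stated as $\le$ via the triangle inequality, which is all the upper bound needs.
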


\section{A fixed approximation ratio method}\label{sec:compute}
In this section, we develop an approximation method to \cref{prob:tensor-comp-linear-constr}.
We first formulate \cref{prob:tensor-comp-linear-constr} using the tensor nuclear norm representation in \cref{thm:nuc-norm-finite-rep}. 
Then we develop a method to find an approximate solution for the problem.

\subsection{A new formulation of \texorpdfstring{\cref{prob:tensor-comp-linear-constr}}{(\ref*{prob:tensor-comp-linear-constr})}}
Plugging \cref{vD-eq2} into~\cref{prob:tensor-comp-linear-constr} gives the following equivalent formulation of \cref{prob:tensor-comp-linear-constr}:
{
\renewcommand{\theequation}{PRIMAL}
\begin{subequations}\label{prob:GenLC-finite-rep}
\renewcommand{\theequation}{\theparentequation-\alph{equation}}
\begin{align}
    \min_{\T,~\Theta\subseteq \mathbb{B}^{n_1},~\bm\Lambda}\;\; &\frac{1}{2}\sum_{\x\in\Theta} \tr\big(\bm\Lambda(\x)\big)\\
    \text{s.t.} \;\;&\bm\Lambda(\x)=\begin{bmatrix}
        \bm\Lambda^{(1)}(\x) &\bm\Gamma(\x)\\
        \bm\Gamma^\top(\x) &\bm\Lambda^{(2)}(\x)
    \end{bmatrix}\succeq \bm 0~~\text{for all }\x\in\Theta,\\
    &\T=\sum_{\x\in\Theta} \x\otimes\bm\Gamma(\x),\label{prob:GenLC-finite-rep-decomp}\\
    &\mathscr{A}(\T)=\mathbf{b}.
\end{align}
\end{subequations}
}

Note that when the set $\Theta$ is given as a finite set, \cref{prob:GenLC-finite-rep} is a standard SDP.
The remaining difficulty is that the support set $\Theta\subset \mathbb{B}^{n_1}$ in~\cref{prob:GenLC-finite-rep} is a \emph{decision variable} and the finite subset of $\mathbb{B}^{n_1}$ that supports the optimal solution is unknown.
This motivates the following approximation strategy: instead of optimizing over the unknown $\Theta$, we fix in advance a finite set $\Upsilon=\{\x_1,\dots,\x_m\}\subseteq \mathbb{B}^{n_1}$ and restrict the support of $\bm\Lambda$ to 
$\Upsilon$.
Replacing $\Theta$ by $\Upsilon$ in~\cref{prob:GenLC-finite-rep} yields the following approximation of~\cref{prob:tensor-comp-linear-constr}:
{
\renewcommand{\theequation}{PRIMAL(D)}
\begin{subequations}
    \label{prob:primal-linear-constr-discret}
    \renewcommand{\theequation}{\theparentequation-\alph{equation}}
\begin{align}
    \min_{\T,~\bm\Lambda_1,\dots,\bm\Lambda_m}\;\; &\frac{1}{2}\sum_{i=1}^{m} \tr\big(\bm\Lambda_i\big)\\
        \text{s.t.} \;\;&\bm\Lambda_i=\begin{bmatrix}
    \bm\Lambda^{(1)}_i &\bm\Gamma_i\\
    \bm\Gamma_i^\top&\bm\Lambda^{(2)}_i
    \end{bmatrix}\succeq 0~~i \in \{1,\dots,m\},\\
        &\T=\sum_{i=1}^m \x_i\otimes\bm\Gamma_i,\\
   &\mathscr{A}(\T)=\mathbf{b}.
    \end{align}
\end{subequations}
}
Note that \cref{prob:primal-linear-constr-discret} has finite semi-definite constraints and linear constraints, and is solvable by existing SDP solvers.
Since the optimal support $\Theta$ may not be a subset of $\Upsilon$, problem~\cref{prob:primal-linear-constr-discret} is just an approximation of and in general not equivalent to~\cref{prob:GenLC-finite-rep}, and the approximation quality depends on how the points in $\Upsilon$  are chosen.
In the following, we first show that \cref{prob:primal-linear-constr-discret} is well-defined, based on which we study the approximation performance of \cref{prob:primal-linear-constr-discret}.
The proof of \cref{prop:reg-primal-dis} is given in \cref{app:reg-primal-dis}.

\begin{proposition}\label{prop:reg-primal-dis}
    If $\operatorname{span}(\Upsilon)=\R^{n_1}$, then \cref{prob:primal-linear-constr-discret} admits an optimal solution.
\end{proposition}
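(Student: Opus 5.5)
We argue in two steps: first that \cref{prob:primal-linear-constr-discret} is feasible with a finite optimal value, and then that the infimum is attained, using coercivity of the objective on the feasible set.

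\emph{Feasibility.} By assumption there is some $\T_0$ with $\mathscr{A}(\T_0)=\mathbf{b}$. Since $\operatorname{span}(\Upsilon)=\R^{n_1}$, the map $(\bm\Gamma_1,\dots,\bm\Gamma_m)\mapsto\sum_{i=1}^m\x_i\otimes\bm\Gamma_i$ is onto $\R^{n_1\times n_2\times n_3}$. To see this, write each standard basis vector $\mathbf{e}_j$ as $\sum_i c_{ji}\x_i$. Then $\T_0=\sum_j \mathbf{e}_j\otimes(\T_0)_j=\sum_i\x_i\otimes\big(\sum_j c_{ji}(\T_0)_j\big)$, where $(\T_0)_j$ is the $j$th mode-1 slice. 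So we can choose $\bm\Gamma_i$ with $\sum_i\x_i\otimes\bm\Gamma_i=\T_0$. For each such $\bm\Gamma_i$, \cref{lemma:matrix-nuclear-norm} supplies PSD completions $\bm\Lambda_i$, for example $\bm\Lambda_i^{(1)}=\U^\star(\bm\Gamma_i)$ and $\bm\Lambda_i^{(2)}=\V^\star(\bm\Gamma_i)$. Hence the feasible set is nonempty. The objective is nonnegative because the trace of a PSD matrix is nonnegative, so the optimal value is finite and lies in $[0,\infty)$.

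\emph{Attainment.} Take the minimizing variables to be $(\bm\Lambda_1,\dots,\bm\Lambda_m)$, with $\T$ determined by them, and fix any feasible point with objective value $v_0$. Restrict attention to the sublevel set $\{\frac12\sum_i\tr(\bm\Lambda_i)\le v_0\}$ intersected with the feasible set. For a PSD matrix, every entry is bounded by the trace: $|(\bm\Lambda_i)_{pq}|\le\sqrt{(\bm\Lambda_i)_{pp}(\bm\Lambda_i)_{qq}}\le\tr(\bm\Lambda_i)$. Each $\tr(\bm\Lambda_i)$ is nonnegative and the traces sum to at most $2v_0$, so every $\bm\Lambda_i$ is bounded, and therefore so is $\T=\sum_i\x_i\otimes\bm\Gamma_i$. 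The constraints — the PSD cone, linear equalities, and the sublevel inequality — are all closed. The restricted feasible set is therefore compact, and the continuous linear objective attains its minimum on it by Weierstrass. That minimizer is optimal for \cref{prob:primal-linear-constr-discret}.

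The only place the spanning hypothesis is used is feasibility. Without it, $\T$ is confined to $\operatorname{span}(\Upsilon)\otimes\R^{n_2\times n_3}$, which may miss the affine set $\{\mathscr{A}(\T)=\mathbf{b}\}$ entirely. So feasibility is the key step. The attainment step is routine, because the PSD trace bound gives compactness directly and no Slater or duality argument is needed.
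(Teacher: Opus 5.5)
Your proof is correct and follows essentially the same route as the paper: feasibility comes from the spanning hypothesis, and attainment comes from the PSD trace bound together with closedness of the feasible set. The differences are minor — the paper decomposes a feasible $\T$ via \cref{lemma:rank1-decomp}, pads the diagonal blocks with $s_i\I$, and passes to a convergent subsequence of a minimizing sequence, whereas you prove surjectivity of $(\bm\Gamma_i)_i\mapsto\sum_i\x_i\otimes\bm\Gamma_i$ directly (which also handles $m>n_1$ spanning points without the implicit basis extraction that \cref{lemma:rank1-decomp} needs), use the completions of \cref{lemma:matrix-nuclear-norm}, and apply Weierstrass on a compact sublevel set.
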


\subsection{A method with constant approximation bound}

By definition, points in $\Upsilon$ are all on the unit sphere. To guarantee an accurate approximation, one intuition is to let the set $\Upsilon$ pick ``representative points" on the unit sphere $\mathbb{B}^{n_1}$.
For this purpose, we use the sphere-covering idea first proposed by \cite{he2023approximation}.

The idea is based on discretization over the unit sphere: For any $\x\in\mathbb{B}^{n_1}$ and $\tau \in (0,1]$, we define $\B^{n_1}(\x, \tau):=\{\y\in\mathbb{B}^{n_1}: \y^\top \x \geq \tau\}$ to be a closed spherical cap with the angular radius $\arccos \tau$.
An illustration of $\B^{3}(\x, \tau)$ is the shaded area in \cref{fig:grid-on-sphere}. 
For any vector in $\y\in \B^n(\x,\tau)$, the angle between $\y$ and $\x$ is less than $\arccos\tau$ and we use $\x$ as an ``representative vector" for all vectors in $\B^{n_1}(\x,\tau)$.
Therefore $\tau$ can be viewed as a tolerance for approximation error.
We first define the concept of {\it $\tau$-hitting set}.
\begin{definition}[$\tau$-hitting Set]\label{def:tau-hitting-set}
    A set $\Upsilon= \{\x_i\in\mathbb{B}^{n_1}, i=1,2,\dots,m\}$ is called a $\tau$-hitting set with cardinality $m$ if $\bigcup_{i=1}^m \B^{n_1}(\x_i, \tau)=\mathbb{B}^{n_1}$, i.e., the $m$ spherical caps covering the unit sphere. 
\end{definition}

Obviously, the smaller $\arccos\tau$ is, the more accurate the approximation $\Upsilon$ is to $\mathbb{B}^{n_1}$ and the larger $m$ is.
We give one way of constructing a $\tau$-hitting set in the following example.
For more ways of the construction of hitting sets and proofs of hitting ratios, see Section 2 of \cite{he2023approximation}.
\begin{example}\label{exmp:grid-on-sphere}
    For any $\x=(x_1,x_2,\dots,x_n)^\top\in\mathbb{B}^n$, let its spherical coordinates be denoted by $(\varphi_1,\varphi_2,\dots,\varphi_{n-1})$ with $\varphi_1,\varphi_2,\dots,\varphi_{n-2}\in [0,\pi]$ and $\varphi_{n-1}\in [0,2\pi]$ such that 
    \begin{align*}
        x_j&=\left(\prod_{k=1}^{j-1}\sin\varphi_k\right)\cos\varphi_j,
        \quad j=1,\dots,n-1,\qquad
        x_n=\prod_{k=1}^{n-1}\sin\varphi_k.
    \end{align*}

   \begin{figure}
    \centering
            \begin{tikzpicture}[scale=2.8,
        pt/.style={circle,fill=black,inner sep=1.0pt},
        bpt/.style={circle,draw=black!55,fill=white,inner sep=0.9pt,line width=0.3pt}]

    \def\az{25}\def\el{28}

    \newcommand\PRJ[2]{%
      \pgfmathsetmacro{\X}{sin(#1)*cos(#2)}%
      \pgfmathsetmacro{\Y}{sin(#1)*sin(#2)}%
      \pgfmathsetmacro{\Zc}{cos(#1)}%
      \pgfmathsetmacro{\resx}{ -\X*sin(\az) + \Y*cos(\az) }%
      \pgfmathsetmacro{\resy}{ -\X*cos(\az)*sin(\el) - \Y*sin(\az)*sin(\el) + \Zc*cos(\el) }%
      \pgfmathsetmacro{\resd}{  \X*cos(\az)*cos(\el) + \Y*sin(\az)*cos(\el) + \Zc*sin(\el) }%
    }
    \newcommand\PRJV[3]{%
      \pgfmathsetmacro{\vx}{ -(#1)*sin(\az) + (#2)*cos(\az) }%
      \pgfmathsetmacro{\vy}{ -(#1)*cos(\az)*sin(\el) - (#2)*sin(\az)*sin(\el) + (#3)*cos(\el) }%
      \pgfmathsetmacro{\vd}{  (#1)*cos(\az)*cos(\el) + (#2)*sin(\az)*cos(\el) + (#3)*sin(\el) }%
    }

    \draw[thick] (0,0) circle (1);

    \foreach \th in {36,72,108,144}{
      \foreach \a in {0,5,...,355}{
        \pgfmathsetmacro{\aa}{\a+5}
        \PRJ{\th}{\a}\pgfmathsetmacro{\axx}{\resx}\pgfmathsetmacro{\ayy}{\resy}\pgfmathsetmacro{\ad}{\resd}
        \PRJ{\th}{\aa}\pgfmathsetmacro{\bxx}{\resx}\pgfmathsetmacro{\byy}{\resy}\pgfmathsetmacro{\bd}{\resd}
        \pgfmathparse{(\ad>=0 && \bd>=0)?1:0}
        \ifnum\pgfmathresult=1 \draw[gray!55,thin](\axx,\ayy)--(\bxx,\byy);
        \else \draw[gray!22,thin](\axx,\ayy)--(\bxx,\byy);\fi
      }
    }
    \foreach \ph in {0,36,72,108,144}{
      \foreach \t in {0,5,...,355}{
        \pgfmathsetmacro{\tt}{\t+5}
        \PRJ{\t}{\ph}\pgfmathsetmacro{\axx}{\resx}\pgfmathsetmacro{\ayy}{\resy}\pgfmathsetmacro{\ad}{\resd}
        \PRJ{\tt}{\ph}\pgfmathsetmacro{\bxx}{\resx}\pgfmathsetmacro{\byy}{\resy}\pgfmathsetmacro{\bd}{\resd}
        \pgfmathparse{(\ad>=0 && \bd>=0)?1:0}
        \ifnum\pgfmathresult=1 \draw[gray!55,thin](\axx,\ayy)--(\bxx,\byy);
        \else \draw[gray!22,thin](\axx,\ayy)--(\bxx,\byy);\fi
      }
    }

    \foreach \th in {36,72,108,144}{
      \foreach \ph in {0,36,72,108,144,180,216,252,288,324}{
        \PRJ{\th}{\ph}
        \pgfmathparse{(\resd>=0) ? 1 : 0}
        \ifnum\pgfmathresult=1 \node[pt] at (\resx,\resy){};\else \node[bpt] at (\resx,\resy){};\fi
      }
    }
    \PRJ{0}{0}\pgfmathparse{(\resd>=0)?1:0}\ifnum\pgfmathresult=1\node[pt] at(\resx,\resy){};\else\node[bpt] at(\resx,\resy){};\fi
    \PRJ{180}{0}\pgfmathparse{(\resd>=0)?1:0}\ifnum\pgfmathresult=1\node[pt] at(\resx,\resy){};\else\node[bpt] at(\resx,\resy){};\fi

    \def\tha{30}\def\pha{55}\def\capr{20}
    \pgfmathsetmacro{\nx}{sin(\tha)*cos(\pha)}\pgfmathsetmacro{\ny}{sin(\tha)*sin(\pha)}\pgfmathsetmacro{\nz}{cos(\tha)}
    \pgfmathsetmacro{\ex}{cos(\tha)*cos(\pha)}\pgfmathsetmacro{\ey}{cos(\tha)*sin(\pha)}\pgfmathsetmacro{\ez}{-sin(\tha)}
    \pgfmathsetmacro{\fx}{\ny*\ez-\nz*\ey}\pgfmathsetmacro{\fy}{\nz*\ex-\nx*\ez}\pgfmathsetmacro{\fz}{\nx*\ey-\ny*\ex}
    \PRJV{\nx}{\ny}{\nz}\coordinate (capc) at (\vx,\vy);

    \foreach \al [count=\i from 0] in {0,5,...,355}{
      \pgfmathsetmacro{\bx}{cos(\capr)*\nx + sin(\capr)*(cos(\al)*\ex + sin(\al)*\fx)}
      \pgfmathsetmacro{\by}{cos(\capr)*\ny + sin(\capr)*(cos(\al)*\ey + sin(\al)*\fy)}
      \pgfmathsetmacro{\bz}{cos(\capr)*\nz + sin(\capr)*(cos(\al)*\ez + sin(\al)*\fz)}
      \PRJV{\bx}{\by}{\bz}
      \coordinate (B\i) at (\vx,\vy);
      \global\expandafter\edef\csname Bd\i\endcsname{\vd}
    }
    \draw[northwestern-purple!22,fill=northwestern-purple!16,line width=0pt]
      (B0) \foreach \i in {1,...,71}{ -- (B\i) } -- cycle;
    \foreach \i in {0,...,71}{
      \pgfmathtruncatemacro{\j}{Mod(\i+1,72)}
      \pgfmathparse{(\csname Bd\i\endcsname>=0)?1:0}
      \ifnum\pgfmathresult=1
         \draw[northwestern-purple!80,line width=0.6pt] (B\i)--(B\j);
      \else
         \draw[northwestern-purple!55,line width=0.5pt,dashed,dash pattern=on 1.2pt off 1.2pt] (B\i)--(B\j);
      \fi
    }
    \draw[gray!75,thin] (0,0) -- (B6);
    \draw[gray!75,thin] (0,0) -- (B42);
    \pgfmathsetmacro{\alA}{30}\pgfmathsetmacro{\alB}{210}
    \pgfmathsetmacro{\dax}{cos(\capr)*\nx + sin(\capr)*(cos(\alA)*\ex + sin(\alA)*\fx)}
    \pgfmathsetmacro{\day}{cos(\capr)*\ny + sin(\capr)*(cos(\alA)*\ey + sin(\alA)*\fy)}
    \pgfmathsetmacro{\daz}{cos(\capr)*\nz + sin(\capr)*(cos(\alA)*\ez + sin(\alA)*\fz)}
    \pgfmathsetmacro{\dbx}{cos(\capr)*\nx + sin(\capr)*(cos(\alB)*\ex + sin(\alB)*\fx)}
    \pgfmathsetmacro{\dby}{cos(\capr)*\ny + sin(\capr)*(cos(\alB)*\ey + sin(\alB)*\fy)}
    \pgfmathsetmacro{\dbz}{cos(\capr)*\nz + sin(\capr)*(cos(\alB)*\ez + sin(\alB)*\fz)}
    \PRJV{0.34*\dax}{0.34*\day}{0.34*\daz}\coordinate (arcA) at (\vx,\vy);
    \PRJV{0.34*\dbx}{0.34*\dby}{0.34*\dbz}\coordinate (arcB) at (\vx,\vy);
    \draw[gray!75,thin] (arcA) to[bend right=20] (arcB);
    \PRJV{0.60*\nx}{0.60*\ny}{0.60*\nz}
    \node[gray!95,font=\scriptsize,left] at (\vx-0.02,\vy+0.02) {$\arccos\tau$};
    \node[circle,fill=northwestern-purple,inner sep=1.0pt] at (capc){};
    \draw[->,thick] (0,0) -- (capc);
    \PRJV{\nx}{\ny}{\nz}\node[right,font=\footnotesize] at (0.62*\vx+0.04,0.55*\vy-0.06){$\x_i$};
    \draw[->,northwestern-purple,thick] ([shift={(0.04,0.06)}]capc) -- (0.80,1.20);
    \node[northwestern-purple,right,font=\footnotesize] at (0.78,1.18){cap $\B^3(\x_i,\tau)$};

    \end{tikzpicture}
    \caption{The grid $\mathcal{H}_0^3$ on the unit sphere $\mathbb{B}^3$. Grid points sit at the intersections of the latitude circles and longitude meridians; solid dots are on the visible side and hollow dots are on the back. 
    One representative point $\x_i$ is shown together with its spherical cap $\B^3(\x_i,\tau)$ of angular radius $\arccos\tau$ (shaded). Every point of $\mathcal{H}_0^3$ is the center of such a cap, and together these caps cover the sphere, so $\mathcal{H}_0^3$ is a $\tau$-hitting set.}
    \label{fig:grid-on-sphere}
    \end{figure}
    Let $\mathbb{D}_1=\{\frac{k\pi}{m}: k=0,1,\dots,m\}$ and $\mathbb{D}_2=\{\frac{k\pi}{m}: k=0,1,\dots,2m-1\}$, then the grid points on the sphere are 
    $
        \Upsilon_0^n := \{\x\in\S^n: \varphi_1,\varphi_2,\dots,\varphi_{n-2}\in\mathbb{D}_1,\varphi_{n-1}\in\mathbb{D}_2\}.
    $
    Then $\Upsilon_0^n$ is a $\left(1-\frac{\pi^2(n-1)}{8m^2}\right)$-hitting set of size at most $2m(m+1)^{n-2}$~\cite[Lemma 3.1]{hu2025complexity}.  
    Taking $m$ sufficiently large, we can construct a $\tau$-hitting set for any $\tau\in(0,1)$. In fact, for any fixed $\tau > 0$, there does not exist a construction of $\tau$-hitting set whose size is bounded by $O(n_1^\alpha)$ with some constant $\alpha>1$ as $n_1$ gets large; see \cite[Table 1]{he2023approximation}.
    \hfill $\triangleleft$
\end{example}
Recall that $\Upsilon$ is used in \cref{prob:primal-linear-constr-discret}, which is an approximation of \cref{prob:GenLC-finite-rep,prob:tensor-comp-linear-constr}.
In the following theorem, we show that solving \cref{prob:primal-linear-constr-discret} gives us a solution with guaranteed approximation ratio to \cref{prob:tensor-comp-linear-constr} when $\Upsilon$ is constructed as a $\tau$-hitting set.
The proof of \cref{thm:approximation-ratio} is deferred to \cref{app:approximation-ratio}.

\begin{theorem}\label{thm:approximation-ratio}
    Let $\T^*$ be an optimal solution of the tensor recovery problem~\cref{prob:tensor-comp-linear-constr}. 
    Let $\Upsilon$ be a $\tau$-hitting set, and $(\widehat{\T},\{\widehat{\bm\Lambda}_i\}_{i=1}^m)$ be an optimal solution of \cref{prob:primal-linear-constr-discret} and $\widehat{V}$ be the optimal value of \cref{prob:primal-linear-constr-discret}. 
    Then it holds that 
    \begin{align}
       \tau \widehat{V} \leq \|\widehat{\T}\|_*\le \widehat{V}\label{ineq:app-1}
    \end{align}
    and 
    \begin{align}
         \tau \|\widehat{\T}\|_* \leq \|\T^*\|_*\le \|\widehat{\T}\|_*.\label{ineq:app-2}
    \end{align}
\end{theorem}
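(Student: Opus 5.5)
Everything reduces to one key inequality: for every tensor $\T$,
\[
\tau V_\Upsilon(\T) \le \|\T\|_* \le V_\Upsilon(\T),
\]
where $V_\Upsilon(\T)$ denotes the optimal value of \cref{prob:vD-app} with $\Theta$ replaced by $\Upsilon$, i.e.\ $\min\{\sum_i\|\bm\Gamma_i\|_*:\T=\sum_i\x_i\otimes\bm\Gamma_i\}$. Once this holds, both \eqref{ineq:app-1} and \eqref{ineq:app-2} follow with little extra work.

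\textbf{Upper bound $\|\T\|_*\le V_\Upsilon(\T)$.} The proof of \cref{thm:nuc-norm-finite-rep} shows that \cref{prob:vD-app} equals \cref{eq:nuclear-norm-no-rank} with $\Theta=\Upsilon$. Any feasible $\{\bm\Gamma_i\}$ can be SVD-expanded into rank-one terms $\x_i\otimes\mathbf u\otimes\mathbf v$ with total weight $\sum_i\|\bm\Gamma_i\|_*$. This is feasible for \cref{tensor-nuclear-norm}, which gives the upper bound.

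\textbf{Lower bound $\tau V_\Upsilon(\T)\le\|\T\|_*$.} Take a nuclear decomposition $\T=\sum_r\lambda_r\mathbf a_r\otimes\mathbf b_r\otimes\mathbf c_r$ with $\sum_r|\lambda_r|=\|\T\|_*$; it exists with finitely many terms by \cref{lemma:nuclear-rank-ub}. For each $r$, the hitting property gives an $\x_{i(r)}\in\Upsilon$ with $\x_{i(r)}^\top\mathbf a_r\ge\tau$. The difficulty is that $\mathbf a_r$ is not equal to $\x_{i(r)}$, so we cannot simply reassign the term. I would therefore argue through duality rather than directly.

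By conic duality for \cref{prob:vD-app}, which attains its value by \cref{prop:reg-primal-dis} since a $\tau$-hitting set spans $\R^{n_1}$, we have
\[
V_\Upsilon(\T)=\max\{\la\T,\Z\ra:\|\Z(\x_i)\|_\sigma\le1\ \forall i\}.
\]
This is the finite version of \cref{prob:nn-2}. Let $\Z$ be feasible, and let $\x^*$ be a unit vector maximizing $\|\Z(\x)\|_\sigma$, with value $s=\|\Z\|_\sigma$. Pick $\x_i$ with $\x_i^\top\x^*\ge\tau$ and write $\x_i=\alpha\x^*+\beta\mathbf w$ with $\mathbf w\perp\x^*$ a unit vector and $\alpha\ge\tau$.

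The standard argument from \cite{he2023approximation} runs as follows. Let $\mathbf y,\mathbf z$ be the top singular pair of $\Z(\x^*)$. The function $g(\x)=\la\Z,\x\otimes\mathbf y\otimes\mathbf z\ra$ is linear in $\x$, so $g(\x)=\la \mathbf g,\x\ra$. The max of $g$ over the unit sphere is at most $s$, and it is attained at $\x^*$ with value $s$, hence $\mathbf g=s\x^*$. Therefore
\[
\|\Z(\x_i)\|_\sigma\ \ge\ g(\x_i)\ =\ s\,\x^{*\top}\x_i\ \ge\ \tau s .
\]
Feasibility then gives $s\le1/\tau$, so $\tau\Z$ lies in the spectral unit ball. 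Hence $\|\T\|_*\ge\la\T,\tau\Z\ra$, and maximizing over $\Z$ yields $\|\T\|_*\ge\tau V_\Upsilon(\T)$. I expect the main point needing care to be justifying strong duality and attainment for the finite SDP, though \cref{prop:reg-primal-dis} together with Slater (take $\Z=0$, which is strictly feasible) handles it.

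\textbf{Conclusion.} Since $\widehat V=V_\Upsilon(\widehat\T)$ (otherwise a better $\bm\Lambda$ would exist for the same $\widehat\T$), applying the key inequality to $\widehat\T$ gives \eqref{ineq:app-1}. For \eqref{ineq:app-2}, feasibility of $\widehat\T$ and optimality of $\T^*$ give $\|\T^*\|_*\le\|\widehat\T\|_*$. In the other direction, $\T^*$ is feasible for \cref{prob:primal-linear-constr-discret} with value $V_\Upsilon(\T^*)$, finite since $\Upsilon$ spans $\R^{n_1}$. Chaining the inequalities,
\[
\tau\|\widehat\T\|_*\ \le\ \tau\widehat V\ \le\ \tau V_\Upsilon(\T^*)\ \le\ \|\T^*\|_*,
\]
where the first step uses $\|\widehat\T\|_*\le\widehat V$, the second uses optimality of $\widehat V$, and the last uses the key lower bound applied to $\T^*$.
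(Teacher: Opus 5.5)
Your proof is correct and follows the paper's route. You identify \cref{prob:primal-linear-constr-discret} with $\min\{f(\T,\Upsilon):\mathscr{A}(\T)=\mathbf{b}\}$ via SDP strong duality (Slater at $\Z=\bm 0$), use the sandwich $\tau f(\T,\Upsilon)\le\|\T\|_*\le f(\T,\Upsilon)$, and chain through $\T^*$ exactly as the paper does. The only difference is that the paper cites \cite[Theorem 3.8]{he2023approximation} for the sandwich, whereas you prove it yourself: the SVD expansion gives the upper bound, and the top-singular-pair argument forcing $\|\Z\|_\sigma\le 1/\tau$ on $\widehat{\Phi}(\Upsilon)$ gives the lower bound, so your version is self-contained.
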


\begin{remark}[Approximate nuclear decomposition]
\label{rem:approx-nuclear-decomposition}
Recall that a nuclear decomposition of $\T$, as in
\cref{def:nuclear-decomp}, is an exact rank-one decomposition
with unit factors whose sum of absolute coefficients equals
$\|\T\|_*$.
For $\tau\in(0,1]$, we call an exact rank-one decomposition
with unit factors a \emph{$\tau$-approximate nuclear decomposition}
if its coefficients satisfy
$  \|\T\|_*
    \leq \sum_{r=1}^{R}|\lambda_r|
    \leq \frac{1}{\tau}\|\T\|_*$.

The optimal SDP solution in \cref{thm:approximation-ratio}
provides such a decomposition of $\widehat{\T}$.
Specifically, let $\widehat{\bm\Gamma}_i$ be the off-diagonal
block of $\widehat{\bm\Lambda}_i$, and let
$    \widehat{\bm\Gamma}_i
    = \sum_{r=1}^{R_i}\sigma_{ir}\y_{ir}\z_{ir}^{\top},
    ~\text{where}~
    R_i:=\operatorname{rank}(\widehat{\bm\Gamma}_i)$
be the singular value decomposition of $\widehat{\bm\Gamma}_i$,
where $\sigma_{ir}>0$, $\y_{ir}\in\R^{n_2}$,
$\z_{ir}\in\R^{n_3}$, and $\|\y_{ir}\|=\|\z_{ir}\|=1$.
Then the second constraint in
\cref{prob:primal-linear-constr-discret} can be written as
\begin{equation}\label{eq:approx-nuclear-decomposition}
    \widehat{\T}
    = \sum_{i=1}^{m}\x_i\otimes\widehat{\bm\Gamma}_i
    = \sum_{i=1}^{m}\sum_{r=1}^{R_i}
      \sigma_{ir}\x_i\otimes\y_{ir}\otimes\z_{ir}.
\end{equation}

By \cref{lemma:matrix-nuclear-norm} and optimality of the
diagonal blocks, the sum of its coefficients satisfies
\[
    \sum_{i=1}^{m}\sum_{r=1}^{R_i}\sigma_{ir}
    = \sum_{i=1}^{m}\|\widehat{\bm\Gamma}_i\|_*
    = \frac{1}{2}\sum_{i=1}^{m}\tr(\widehat{\bm\Lambda}_i)
    = \widehat{V}.
\]
Consequently, \cref{ineq:app-1} yields
$  \|\widehat{\T}\|_*
    \leq \sum_{i=1}^{m}\sum_{r=1}^{R_i}\sigma_{ir}
    \leq \frac{1}{\tau}\|\widehat{\T}\|_*$.
Hence, \cref{eq:approx-nuclear-decomposition} is a
$\tau$-approximate nuclear decomposition of $\widehat{\T}$.
It is a nuclear decomposition whenever
$\widehat{V}=\|\widehat{\T}\|_*$.
\end{remark}

To summarize, we develop an approach that gives a $\tau$-approximation solution to \cref{prob:tensor-comp-linear-constr}. The key step in this approach lies in using a $\tau$-hitting set containing finite elements to approximate the unit sphere, and the approximation ratio depends on how coarse our approximation to the unit sphere is. In the next section, we sharpen the constant approximation solution to an asymptotic optimal solution.
\begin{remark}
     We extend the approximation bound, from purely computing the nuclear norm in \cite{he2023approximation}, to that with optimization subject to general linear constraints.
     It greatly broadens the application domain of this methodology.
\end{remark}

\section{An adaptive
algorithm with asymptotic optimality}\label{sec:dual}
In \cref{sec:compute}, we used the finite representation of the tensor nuclear norm to reformulate~\cref{prob:tensor-comp-linear-constr} as a minimization problem, and obtained a $\tau$-approximation by discretizing the unit sphere with a $\tau$-hitting set fixed in advance.
A limitation of that approach is that the quality of the solution is capped by $\tau$. As $\tau\to 1$, the size of $\tau$-hitting grows exponentially, making the approach impractical. 
In this section, we propose an adaptive algorithm that asymptotically converges to the optimal solution.
This is achieved by looking at the dual problem of \cref{prob:tensor-comp-linear-constr}.

\subsection{Dual reformulation}\label{subsec:dual-reformulate}
We begin by rewriting the tensor recovery problem~\cref{prob:tensor-comp-linear-constr} as a minimax problem.
Recall that the tensor nuclear norm is the dual norm of the tensor spectral norm, i.e., $\|\T\|_*=\max_{\|\Z\|_\sigma\leq 1}\la\T,\Z\ra$.
Substituting this relation into the objective of~\cref{prob:tensor-comp-linear-constr} gives the minimax formulation
\begin{equation}\label{prob:tensor-comp-linear-constr-1}
    \min_{\T}\max_{\Z}\;\left\{\la \T,\Z\ra
    \quad\text{s.t.}\quad
    \mathscr{A}(\T)=\mathbf{b},\quad \|\Z\|_\sigma\leq 1.\right\}
\end{equation}

Problem \cref{prob:tensor-comp-linear-constr-1} is equivalent to \cref{prob:tensor-comp-linear-constr}, and \cref{prob:tensor-comp-linear-constr-1} can be viewed as the primal problem.
To derive the dual problem,
we exchange the order of the min and the max in~\cref{prob:tensor-comp-linear-constr-1}, which gives the following max-min problem
\begin{equation}\label{prob:tensor-comp-maxmin}
    \max_{\Z}\min_{\T}\;\left\{\la \T,\Z\ra
    \quad\text{s.t.}\quad
    \mathscr{A}(\T)=\mathbf{b},\quad \|\Z\|_\sigma\leq 1.\right\}
\end{equation}
We claim that $\mathrm{val}\cref{prob:tensor-comp-linear-constr-1}=\mathrm{val}\cref{prob:tensor-comp-maxmin}$, i.e., strong duality holds for \cref{prob:tensor-comp-linear-constr-1} and \cref{prob:tensor-comp-maxmin}.
To see this, note that (i) the objective $\la \T,\Z\ra$ is bilinear, hence convex in $\T$ and concave in $\Z$; (ii) the feasible set of $\T$ is $\{\T\in\R^{n_1\times n_2\times n_3}:\mathscr{A}(\T)=\mathbf{b}\}$, which is convex, and the feasible set of $\Z$ is $\{\Z\in\R^{n_1\times n_2\times n_3}:\|\Z\|_\sigma\leq 1\}$, which is convex and compact.
Using Sion's minimax theorem \citep{ref:sion1958general}, the equivalence holds.

To facilitate the analysis, we now reformulate~\cref{prob:tensor-comp-maxmin} as a single-stage maximization problem.
To do this, we derive dual problem of the inner minimization problem in \cref{prob:tensor-comp-maxmin}.
For a fixed $\Z$, the inner problem $\min_{\T}\{\la\T,\Z\ra:\mathscr{A}(\T)=\mathbf{b}\}$ is a linear program in $\T$, and its Lagrangian dual is
\[
\max_{\bm\lambda\in\R^d}
\left\{\bm\lambda^\top\mathbf{b}:
\Z=\sum_{\ell=1}^d \bm\lambda_\ell\A_\ell\right\}.
\]
where $\bm\lambda_\ell$ denotes the $\ell$th entry of $\bm\lambda$ and we use $[\mathscr{A}(\T)]_\ell=\la\A_\ell,\T\ra$.
Because the inner problem is a linear program, strong duality holds. 
So we can replace the inner minimization in~\cref{prob:tensor-comp-maxmin} by its dual.
The resulting problem shares the same optimal value and the same optimal solution as~\cref{prob:tensor-comp-maxmin}:
\[
\max_{\bm\lambda\in\R^d,\,\Z\in\R^{n_1\times n_2\times n_3}}
\left\{\bm\lambda^\top\mathbf{b}:
\|\Z\|_\sigma\leq 1,\quad
\Z=\sum_{\ell=1}^d \bm\lambda_\ell\A_\ell\right\}.
\]
For the spectral-norm constraint $\|\Z\|_\sigma\leq 1$, we use the Schur complement to replace it with a family of positive semi-definite constraints, one for each $\x\in\mathbb{B}^{n_1}$.
This gives the following one stage maximization problem :
{
\renewcommand{\theequation}{DUAL}
\begin{subequations}\label{prob:tensor-comp-dual}
\renewcommand{\theequation}{\theparentequation-\alph{equation}}
\begin{align}
\max_{\bm\lambda\in\R^d,~\Z\in\R^{n_1\times n_2\times n_3}}\;\;&
\bm\lambda^\top\mathbf{b} \\
\text{s.t.}\;\;&
\begin{bmatrix}
\I_{n_2} &\Z(\x)\\
\Z(\x)^\top &\I_{n_3}
\end{bmatrix}\succeq \bm 0
~~\forall\,\x\in\mathbb{B}^{n_1},
\label{prob:tensor-comp-dual-sdp}\\
&\Z=\sum_{\ell=1}^d \bm\lambda_\ell\A_\ell.
\label{prob:tensor-comp-dual-lincon}
\end{align}
\end{subequations}
}
Problem~\cref{prob:tensor-comp-dual} is a semi-definite program with infinitely many constraints, each corresponding to a point $\x$ on the unit sphere $\mathbb{B}^{n_1}$.
By construction, it is a reformulation of~\cref{prob:tensor-comp-maxmin} and hence of the original problem~\cref{prob:tensor-comp-linear-constr}, i.e., val\cref{prob:tensor-comp-linear-constr}=val\cref{prob:tensor-comp-maxmin}=val\cref{prob:tensor-comp-dual}.
The difficulty of solving ~\cref{prob:tensor-comp-dual} lies in the infinite family of semi-definite constraints~\cref{prob:tensor-comp-dual-sdp}.

Similar to the previous section, we choose finite number of points to approximate the unit sphere.
The difference is that we select points in $\Upsilon$ using an algorithm instead of giving by heuristics.
When the unit sphere $\mathbb{B}^{n_1}$ is approximated by a finite subset $\Upsilon=\{\x_1,\dots,\x_m\}\subseteq\mathbb{B}^{n_1}$, the problem is
{
\renewcommand{\theequation}{DUAL(D)}
\begin{subequations}
\label{prob:tensor-comp-dual-app}
\renewcommand{\theequation}{\theparentequation-\alph{equation}}
\begin{align}
    \max_{\bm\lambda\in\R^d,~\Z\in\R^{n_1\times n_2\times n_3}}\;\;&\bm\lambda^\top\mathbf{b}\\
    \text{s.t.}\;\;&\begin{bmatrix}
        \I_{n_2} &\Z(\x)\\
        \Z(\x)^\top &\I_{n_3}
    \end{bmatrix}\succeq \bm 0~~\forall\,\x\in\Upsilon,\label{constr:dual-app-sdp}\\
    &\Z=\sum_{\ell=1}^d \bm\lambda_\ell\A_\ell.
\end{align}
\end{subequations}
}
In the next proposition, we first show that \cref{prob:tensor-comp-dual-app} is well-defined.
The proof of \cref{prop:sol-dual-app} is given in \cref{app:sol-dual-app}.
\begin{proposition}\label{prop:sol-dual-app}
    If $\operatorname{span}(\Upsilon) = \R^{n_1}$, then 
    \begin{enumerate}[label=(\roman*)]
        \item the feasible set of \cref{prob:tensor-comp-dual-app} is compact; 
        \item \cref{prob:tensor-comp-dual-app} always admits an optimal solution.
    \end{enumerate}
    
\end{proposition}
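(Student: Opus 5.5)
We will prove (i) first and then deduce (ii) from it. The feasible set of \cref{prob:tensor-comp-dual-app} is
\[
F=\Big\{(\bm\lambda,\Z):\ \Z=\textstyle\sum_{\ell=1}^d\bm\lambda_\ell\A_\ell,\ \begin{bmatrix}\I_{n_2}&\Z(\x)\\ \Z(\x)^\top&\I_{n_3}\end{bmatrix}\succeq\bm 0~\forall\,\x\in\Upsilon\Big\}.
\]

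\textbf{Closedness.} Each constraint in $F$ is either a linear equality or a linear matrix inequality in $(\bm\lambda,\Z)$. The map $\Z\mapsto\Z(\x)$ is linear and the PSD cone is closed. Hence $F$ is an intersection of finitely many closed sets, so it is closed.

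\textbf{Boundedness.} This is the substantive step. By \cref{eq:schur-spec}, each LMI is equivalent to $\|\Z(\x_i)\|_\sigma\le 1$ for $i=1,\dots,m$. Since $\operatorname{span}(\Upsilon)=\R^{n_1}$, we can choose $n_1$ linearly independent points $\x_{i_1},\dots,\x_{i_{n_1}}\in\Upsilon$. Let $\mathbf{X}$ be the invertible matrix with these points as columns.

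Write the slices of $\Z$ as $\BZ_1,\dots,\BZ_{n_1}$. Each $\Z(\x_{i_k})=\sum_j(\x_{i_k})_j\BZ_j$ is a linear combination of the slices, with coefficients given by $\mathbf{X}$. Inverting $\mathbf{X}$, each slice $\BZ_j$ is a linear combination of $\Z(\x_{i_1}),\dots,\Z(\x_{i_{n_1}})$ with coefficients from $\mathbf{X}^{-1}$. Consequently
\[
\|\BZ_j\|_\sigma\le \sum_k |(\mathbf{X}^{-1})_{kj}|\le C,
\]
for a constant $C$ depending only on $\Upsilon$. So the $\Z$-component of $F$ is bounded.

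Next we bound $\bm\lambda$. Since $\A_1,\dots,\A_d$ are linearly independent (a standing assumption), the linear map $\bm\lambda\mapsto\sum_\ell\bm\lambda_\ell\A_\ell$ is injective. An injective linear map has a bounded left inverse, so $\|\bm\lambda\|\le c\,\|\Z\|$ for some constant $c$. Therefore $\bm\lambda$ is bounded as well, and $F$ is compact.

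\textbf{Existence of an optimum.} $F$ is nonempty because $(\bm\lambda,\Z)=(\bm 0,\bm 0)$ satisfies every constraint. The objective $\bm\lambda^\top\mathbf{b}$ is continuous. By the Weierstrass theorem it attains its maximum on the nonempty compact set $F$, which proves (ii).

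The only step needing care is the boundedness of $\Z$: it is exactly where the spanning hypothesis on $\Upsilon$ enters. Without that hypothesis, the slices of $\Z$ in directions orthogonal to $\operatorname{span}(\Upsilon)$ would be unconstrained, and $F$ could be unbounded.
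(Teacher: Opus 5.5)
Your proof is correct and follows essentially the same route as the paper: closedness from the linear/LMI form of the constraints, boundedness of $\Z$ by inverting the matrix $\mathbf{X}$ of $n_1$ linearly independent points of $\Upsilon$ together with $\|\Z(\x_i)\|_\sigma\le 1$, boundedness of $\bm\lambda$ from the linear independence of the $\A_\ell$, and then Weierstrass. The differences are cosmetic: you bound the slices $\BZ_j$ directly instead of bounding $\|\Z(\x)\|_\sigma$ uniformly over the sphere, and you explicitly record nonemptiness via $(\bm 0,\bm 0)$, which the paper leaves implicit.
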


In the rest of this section, we study \cref{prob:tensor-comp-dual-app} and use its solution as an approximate solution to \cref{prob:tensor-comp-dual}.

\subsection{Relation to the primal perspective}\label{subsec:primal-dual-relation}
Before proposing our algorithm, we investigate the relationship of dual problem \cref{prob:tensor-comp-dual} and the primal problem \cref{prob:GenLC-finite-rep}, and the relationship between their discretization approximation \cref{prob:primal-linear-constr-discret,prob:tensor-comp-dual-app} as a preparation.
Understanding their relations can help us to design algorithm and analyze its performance in \cref{subsec:dual-adaptive}.

First note that both reformulations are exact, i.e.:
\begin{align}\label{eq:val-chain}
    \mathrm{val}\cref{prob:tensor-comp-linear-constr}
    =\mathrm{val}\cref{prob:GenLC-finite-rep}
    =\mathrm{val}\cref{prob:tensor-comp-dual},
\end{align}
where the first equality follows from \cref{thm:nuc-norm-finite-rep} and the second follows from Sion's minimax theorem.
A common feature of the two exact formulations~\cref{prob:GenLC-finite-rep} and~\cref{prob:tensor-comp-dual} is that both involve finding points on the unit sphere $\mathbb{B}^{n_1}$: in the primal~\cref{prob:GenLC-finite-rep} the support set $\Theta\subseteq\mathbb{B}^{n_1}$ ranges over the sphere; and in the dual~\cref{prob:tensor-comp-dual} the semi-definite constraint~\cref{prob:tensor-comp-dual-sdp} is imposed at every $\x\in\mathbb{B}^{n_1}$.
To handle infinity, both perspectives use the strategy to replace the sphere by a finite discretization set $\Upsilon$, yielding
problems~\cref{prob:primal-linear-constr-discret} and ~\cref{prob:tensor-comp-dual-app}.
The following theorem gives the relation between \cref{prob:primal-linear-constr-discret} and ~\cref{prob:tensor-comp-dual-app}.

\begin{theorem}\label{thm:dual-dis}
The relation between~\cref{prob:tensor-comp-dual-app} and \cref{prob:primal-linear-constr-discret} is as follows.
\begin{enumerate}[label=(\roman*)]
    \item Problem~\cref{prob:tensor-comp-dual-app} is the Lagrangian dual problem of~\cref{prob:primal-linear-constr-discret}, and strong duality holds between them.
    \item For \cref{prob:tensor-comp-dual-app} with a finite set $\Upsilon$, $\mathrm{val}\cref{prob:tensor-comp-dual}=\mathrm{val}\cref{prob:tensor-comp-dual-app}$ if and only if there exists an optimal solution $(\T^*,\Theta^*,\bm\Lambda^*)$ to \cref{prob:GenLC-finite-rep} such that $\Upsilon=\Theta^*$.
\end{enumerate}
\end{theorem}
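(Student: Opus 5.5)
The plan is to prove (i) by writing down the Lagrangian of \cref{prob:primal-linear-constr-discret} explicitly and invoking conic strong duality through a Slater point on the dual side. Part (ii) will then follow by sandwiching optimal values using (i), the chain \cref{eq:val-chain}, and the fact that \cref{prob:primal-linear-constr-discret} is \cref{prob:GenLC-finite-rep} with $\Theta$ fixed to $\Upsilon$.

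For (i), I introduce multipliers for the three constraint families of \cref{prob:primal-linear-constr-discret}:
\begin{itemize}
\item $\bm\lambda\in\R^d$ for $\mathscr{A}(\T)=\mathbf{b}$;
\item a tensor $\Z$ for $\T=\sum_i\x_i\otimes\bm\Gamma_i$;
\item PSD matrices $\mathbf{S}_i\succeq\bm 0$ for $\bm\Lambda_i\succeq\bm 0$.
\end{itemize}
The Lagrangian is
\[
\tfrac12\sum_i\tr(\bm\Lambda_i)-\sum_i\la\mathbf{S}_i,\bm\Lambda_i\ra+\Big\la\Z,\T-\sum_i\x_i\otimes\bm\Gamma_i\Big\ra+\bm\lambda^\top\big(\mathbf{b}-\mathscr{A}(\T)\big).
\]
Minimizing over the free variable $\T$ forces $\Z=\sum_\ell\lambda_\ell\A_\ell$. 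I then use the identity $\la\x_i\otimes\bm\Gamma_i,\Z\ra=\la\bm\Gamma_i,\Z(\x_i)\ra$. With it, minimizing over the free symmetric matrix $\bm\Lambda_i$ forces
\[
\mathbf{S}_i=\tfrac12\begin{bmatrix}\I&-\Z(\x_i)\\-\Z(\x_i)^\top&\I\end{bmatrix}.
\]
Conjugating by $\mathrm{diag}(\I,-\I)$ shows that $\mathbf{S}_i\succeq\bm 0$ is exactly \cref{constr:dual-app-sdp}. The remaining dual objective is $\bm\lambda^\top\mathbf{b}$, so the Lagrangian dual is precisely \cref{prob:tensor-comp-dual-app}.

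For strong duality I regard \cref{prob:tensor-comp-dual-app} as the primal conic program. It is strictly feasible: $\bm\lambda=\bm 0$ gives $\Z=\bm 0$ and every block equals $\I\succ\bm 0$. Its value is bounded above by weak duality as soon as \cref{prob:primal-linear-constr-discret} is feasible. Feasibility holds under the standing condition $\operatorname{span}(\Upsilon)=\R^{n_1}$ from \cref{prop:reg-primal-dis,prop:sol-dual-app}: any feasible $\T$ can then be written as $\sum_i\x_i\otimes\bm\Gamma_i$, and diagonal blocks can be taken from \cref{lemma:matrix-nuclear-norm}. The conic Slater theorem then gives equal values and attainment on the \cref{prob:primal-linear-constr-discret} side, which is consistent with \cref{prop:reg-primal-dis}. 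I expect the main care point to be making this hypothesis explicit: without the spanning condition, \cref{prob:primal-linear-constr-discret} may be infeasible while \cref{prob:tensor-comp-dual-app} is unbounded, so the values still agree, at $+\infty$.

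For (ii), first note that \cref{prob:primal-linear-constr-discret} is \cref{prob:GenLC-finite-rep} restricted to $\Theta=\Upsilon$. Any feasible point of it is therefore feasible for \cref{prob:GenLC-finite-rep}, provided zero blocks $\bm\Lambda(\x)=\bm 0$ are allowed, so that unused points are harmless. Hence
\[
\mathrm{val}\cref{prob:primal-linear-constr-discret}\ge\mathrm{val}\cref{prob:GenLC-finite-rep}.
\]
\emph{Sufficiency.} If some optimal $(\T^*,\Theta^*,\bm\Lambda^*)$ of \cref{prob:GenLC-finite-rep} has $\Theta^*=\Upsilon$, then it is feasible for \cref{prob:primal-linear-constr-discret} with value $\mathrm{val}\cref{prob:GenLC-finite-rep}$, so the two values coincide. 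Combining (i) with \cref{eq:val-chain} gives $\mathrm{val}\cref{prob:tensor-comp-dual-app}=\mathrm{val}\cref{prob:tensor-comp-dual}$.

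\emph{Necessity.} If $\mathrm{val}\cref{prob:tensor-comp-dual}=\mathrm{val}\cref{prob:tensor-comp-dual-app}$, then (i) and \cref{eq:val-chain} yield $\mathrm{val}\cref{prob:primal-linear-constr-discret}=\mathrm{val}\cref{prob:GenLC-finite-rep}$. I take an optimal solution of \cref{prob:primal-linear-constr-discret}, which exists by \cref{prop:reg-primal-dis}, and set $\Theta^*=\Upsilon$. This gives a feasible point of \cref{prob:GenLC-finite-rep} attaining its optimal value, hence an optimal solution with support $\Upsilon$.

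The main obstacle I anticipate is bookkeeping rather than analysis. One point is the cardinality bound $|\Theta|\le n_1n_2n_3$ built into \cref{vD-eq2}. I would handle it by reading \cref{prob:GenLC-finite-rep} as allowing an arbitrary finite $\Theta$; its value is unchanged by \cref{thm:nuc-norm-finite-rep}, since extra points can only help and the bound is already achieved. The other point is interpreting ``$\Upsilon=\Theta^*$'' as allowing zero blocks on some points of $\Upsilon$.
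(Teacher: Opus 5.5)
Your proposal is correct and follows essentially the paper's argument. For (i) it uses Lagrangian duality together with the Slater point $\bm\lambda=\bm 0$, $\Z=\bm 0$ of \cref{prob:tensor-comp-dual-app}, and for (ii) the same sandwich of values via (i), \cref{eq:val-chain}, and an optimal solution of \cref{prob:primal-linear-constr-discret} with $\Theta^*:=\Upsilon$. The only real difference is the direction of dualization. You dualize \cref{prob:primal-linear-constr-discret} and recover \cref{prob:tensor-comp-dual-app} directly, which avoids the paper's auxiliary $\T'$ and the sign/factor-$2$ rescaling of the multipliers; the paper dualizes \cref{prob:tensor-comp-dual-app} itself, so Slater applies there directly rather than through the symmetry of conic duality that you invoke.
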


The proof of \cref{thm:dual-dis} is in \cref{app:dual-dis}.
\cref{thm:dual-dis}(i) shows that the discretized primal~\cref{prob:primal-linear-constr-discret} and discretized dual~\cref{prob:tensor-comp-dual-app} are Lagrangian duals under the same $\Upsilon$.
\cref{thm:dual-dis}(ii) reveals that the finite relaxation \cref{prob:tensor-comp-dual-app} is exact if and only if its discretization set $\Upsilon$ is chosen as the support set $\Theta^*$ of an optimal solution to \cref{prob:GenLC-finite-rep}.
In \cref{subsec:dual-adaptive}, we give a condition for the finite set, under which the relaxed problem \cref{prob:tensor-comp-dual-app} is a tight relaxation of \cref{prob:tensor-comp-dual} (see \cref{thm:dual-app}).
We apply this condition iteratively in an algorithm to find the set.
The relationships among all the problems studied in \cref{sec:compute,sec:dual} are summarized in \cref{fig:relations}.

\begin{figure}[ht]
\centering
\begin{tikzpicture}[
    node distance=12mm and 26mm,
    box/.style={draw,rounded corners,inner sep=3pt,minimum height=7mm,font=\small},
    lbl/.style={font=\scriptsize,midway},
    >={Stealth[scale=0.9]}]
\node[box] (orig) {\cref{prob:tensor-comp-linear-constr}};
\node[box,right=of orig] (fin) {\cref{prob:GenLC-finite-rep}};
\node[box,right=of fin] (pdis) {\cref{prob:primal-linear-constr-discret}};
\node[box,below=14mm of orig] (maxmin) {\cref{prob:tensor-comp-maxmin}};
\node[box,below=14mm of fin] (dual) {\cref{prob:tensor-comp-dual}};
\node[box,below=14mm of pdis] (ddis) {\cref{prob:tensor-comp-dual-app}};
\draw[<->] (orig) -- node[lbl,above]{\cref{thm:nuc-norm-finite-rep}} (fin);
\draw[->] (fin) -- node[lbl,above]{Let $\Theta=\Upsilon$} (pdis);
\draw[<->] (orig) -- node[lbl,right]{Sion} (maxmin);
\draw[<->] (maxmin) -- node[lbl,above]{LP dual} (dual);
\draw[->] (dual) -- node[lbl,above]{discretize $\mathbb{B}^{n_1}$ by $\Upsilon$} node[lbl,below]{(tight if \cref{thm:dual-app})} (ddis);
\draw[<->] (fin) -- node[lbl,right]{primal--dual} (dual);
\draw[<->] (pdis) -- node[lbl,right]{primal-dual (\cref{thm:dual-dis})} (ddis);
\end{tikzpicture}
\caption{Relations among the problems. Top row: primal side; bottom row: dual side. Horizontal arrows are exact reformulations or discretizations; vertical arrows are primal--dual relationships.}
\label{fig:relations}
\end{figure}

\subsection{Adaptive approximation scheme and convergence}\label{subsec:dual-adaptive}

In this subsection, we propose an algorithm that adaptively selects points on the unit sphere to $\Upsilon$, and prove its asymptotic optimal performance. It follows a similar idea in~\cite{he2026nuclear}, but in our case we need to optimize the nuclear norm under linear constraints, making the problem harder and more general.
We solve the problem from \cref{prob:tensor-comp-dual,prob:tensor-comp-dual-app}.
First, we clarify the relation between \cref{prob:tensor-comp-dual,prob:tensor-comp-dual-app}.
For ease of notation, we let
\begin{align*}
    \Phi:=\left\{\Z:\begin{bmatrix}
        \I_{n_2} &\Z(\x)\\
        \Z(\x)^\top &\I_{n_3}
    \end{bmatrix}\succeq \bm 0,~\forall\,\x\in\mathbb{B}^{n_1}\right\}
\end{align*}
denote the feasible set induced by~\cref{prob:tensor-comp-dual-sdp}.
Also, we let
\begin{align}\label{eq:S-hat}
    \widehat{\Phi}(\Upsilon):=\left\{\Z:\begin{bmatrix}
        \I_{n_2} &\Z(\x)\\
        \Z(\x)^\top &\I_{n_3}
    \end{bmatrix}\succeq \bm 0,~\forall\,\x\in\Upsilon\right\}
\end{align}
denote the feasible set of \cref{constr:dual-app-sdp} induced by set $\Upsilon$.
Note that we have $\Phi\subseteq\widehat{\Phi}(\Upsilon)$, and~\cref{prob:tensor-comp-dual-app} is a relaxed problem of~\cref{prob:tensor-comp-dual}.
In the following lemma, we show that this containment is strict for any finite set $\Upsilon$.
The proof of \cref{lemma:SHsubsetS} is in \cref{app:SHsubsetS}.
\begin{lemma}\label{lemma:SHsubsetS}
    $\Phi\subsetneq\widehat{\Phi}(\Upsilon)$ holds for any finite set $\Upsilon\subseteq\mathbb{B}^{n_1}$.
\end{lemma}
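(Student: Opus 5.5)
The containment $\Phi\subseteq\widehat{\Phi}(\Upsilon)$ is immediate, because $\Upsilon\subseteq\mathbb{B}^{n_1}$ and every constraint defining $\widehat{\Phi}(\Upsilon)$ also appears among those defining $\Phi$. The work is in strictness. For any finite $\Upsilon=\{\x_1,\dots,\x_m\}$, the plan is to build an explicit $\Z\in\widehat{\Phi}(\Upsilon)\setminus\Phi$.

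By \cref{eq:schur-spec}, membership in $\widehat{\Phi}(\Upsilon)$ means $\|\Z(\x_i)\|_\sigma\le 1$ for every $i$. By \cref{eq:tensor-norm-matrix-norm}, membership in $\Phi$ means $\max_{\|\x\|=1}\|\Z(\x)\|_\sigma\le1$. So it suffices to find $\Z$ with $\|\Z(\x_i)\|_\sigma\le 1$ for all $i$ but $\|\Z(\x_0)\|_\sigma>1$ at some unit vector $\x_0$.

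\begin{enumerate}
\item Since $\Upsilon$ is finite and $\mathbb{B}^{n_1}$ is infinite (assuming $n_1\ge 2$), pick a unit vector $\x_0$ with $\x_0\neq\pm\x_i$ for every $i$. Then $c:=\max_i|\x_0^\top\x_i|<1$.
\item Take $\Z:=\x_0\otimes \mathbf{e}_1\otimes\mathbf{e}_1$, which is rank one. Then $\Z(\x)=(\x_0^\top\x)\,\mathbf{e}_1\mathbf{e}_1^\top$, so $\|\Z(\x)\|_\sigma=|\x_0^\top\x|$.
\item Scale to $\Z_\alpha:=\alpha\Z$ with $1<\alpha\le 1/c$, reading $1/c$ as $+\infty$ when $c=0$. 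Then $\|\Z_\alpha(\x_i)\|_\sigma=\alpha|\x_0^\top\x_i|\le \alpha c\le 1$, so $\Z_\alpha\in\widehat{\Phi}(\Upsilon)$. At the other point, $\|\Z_\alpha(\x_0)\|_\sigma=\alpha>1$, so $\Z_\alpha\notin\Phi$.
\end{enumerate}

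The only delicate step is the first one. Finiteness of $\Upsilon$ must give a strict gap $c<1$, and it does because a maximum over finitely many values each below $1$ is below $1$. This is also the only place the dimension matters. When $n_1=1$ the sphere is $\{\pm1\}$, and taking $\Upsilon=\{1\}$ gives equality, so the lemma implicitly assumes $n_1\ge2$; I would state that assumption explicitly. Everything else is a direct computation.
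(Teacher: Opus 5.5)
Your proof is correct and is essentially the paper's argument: the paper also builds a rank-one $\Z=\mathbf{u}\otimes\mathbf{M}$ with $\mathbf{u}\neq\pm\x$ for every $\x\in\Upsilon$ and $\|\mathbf{M}\|_\sigma=1/c$, where $c=\max_{\x\in\Upsilon}|\la\mathbf{u},\x\ra|$. The differences are minor. The paper first reduces to a spanning $\Upsilon$ to guarantee $c>0$, whereas you handle $c=0$ directly by allowing any $\alpha>1$. Your remark that $n_1\ge 2$ is needed is a valid caveat that the paper leaves implicit.
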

\cref{lemma:SHsubsetS} shows that when we enlarge $\Upsilon$, it shrinks the feasible region $\widehat{\Phi}(\Upsilon)$ toward $\Phi$. 
But for any finite $\Upsilon$, the two sets are never the same.
However, note that $\Phi=\widehat{\Phi}(\Upsilon)$ is {\it not} needed for \cref{prob:tensor-comp-dual-app} to be an exact relaxation of \cref{tensor-nuclear-norm} in \emph{value}. 
If the optimal solution of~\cref{prob:tensor-comp-dual} happens to lie in $\widehat{\Phi}(\Upsilon)$ for some $ \Upsilon$, the optimal solution of \cref{prob:tensor-comp-dual-app} is also optimal to ~\cref{prob:tensor-comp-dual}.
The key is thus to find a finite $\Upsilon$ so that $\widehat{\Phi}(\Upsilon)$ includes an optimal solution.
The next theorem proposes a condition for such $\Upsilon$.
The proof of \cref{thm:dual-app} is deferred to \cref{app:dual-app}.

\begin{theorem}\label{thm:dual-app}
Let $\Upsilon=\{\x_1,\dots,\x_m\}\subseteq\mathbb{B}^{n_1}$ and $\widehat{\Z}_{\Upsilon}\in\widehat{\Phi}(\Upsilon)$ be an optimal solution of~\cref{prob:tensor-comp-dual-app}.
Consider the maximization problem
\begin{align}\label{eq:find-x-star}
\max_{\|\x\|=1}~\big\|\widehat{\Z}_{\Upsilon}(\x)\big\|_\sigma.
\end{align}
Let $\x^*$ be an optimal solution of \cref{eq:find-x-star}.
If $\x^*\in\Upsilon$, then $\widehat{\Z}_{\Upsilon}$ is also an optimal solution of~\cref{prob:tensor-comp-dual}.
\end{theorem}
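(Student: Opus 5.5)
The argument is a relaxation-plus-feasibility one. Problem \cref{prob:tensor-comp-dual-app} is a relaxation of \cref{prob:tensor-comp-dual} because $\Phi\subseteq\widehat{\Phi}(\Upsilon)$, and both problems have the same objective $\bm\lambda^\top\mathbf{b}$ and the same linear constraint $\Z=\sum_\ell\bm\lambda_\ell\A_\ell$. Hence $\mathrm{val}\cref{prob:tensor-comp-dual-app}\ge\mathrm{val}\cref{prob:tensor-comp-dual}$. So it suffices to show that the relaxed optimizer $(\widehat{\bm\lambda},\widehat{\Z}_{\Upsilon})$ is feasible for \cref{prob:tensor-comp-dual}, i.e.\ that $\widehat{\Z}_{\Upsilon}\in\Phi$. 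Once that holds, its objective value equals $\mathrm{val}\cref{prob:tensor-comp-dual-app}$, which is at least the optimal value of \cref{prob:tensor-comp-dual}. A feasible point attaining at least the optimum is optimal, and the two values coincide.

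The key step is feasibility, and I would prove it with the spectral-norm characterization. By \cref{eq:schur-spec}, membership $\widehat{\Z}_{\Upsilon}\in\widehat{\Phi}(\Upsilon)$ means $\|\widehat{\Z}_{\Upsilon}(\x_i)\|_\sigma\le 1$ for every $\x_i\in\Upsilon$. Since $\x^*$ maximizes \cref{eq:find-x-star} and $\x^*\in\Upsilon$, every $\x\in\mathbb{B}^{n_1}$ satisfies
\[
\|\widehat{\Z}_{\Upsilon}(\x)\|_\sigma\le\|\widehat{\Z}_{\Upsilon}(\x^*)\|_\sigma\le 1 .
\]
Applying \cref{eq:schur-spec} again in the reverse direction, the $2\times 2$ block matrix is PSD for every $\x$ on the sphere. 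This gives $\widehat{\Z}_{\Upsilon}\in\Phi$; equivalently, $\|\widehat{\Z}_{\Upsilon}\|_\sigma\le1$ by \cref{eq:tensor-norm-matrix-norm}.

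There is no real obstacle here. The only points needing care are, first, that the maximum in \cref{eq:find-x-star} is attained, since the sphere is compact and $\x\mapsto\|\widehat{\Z}_{\Upsilon}(\x)\|_\sigma$ is continuous, so $\x^*$ exists. Second, the optimal solution of \cref{prob:tensor-comp-dual-app} should be read as the pair $(\widehat{\bm\lambda},\widehat{\Z}_{\Upsilon})$. Because the $\A_\ell$ are linearly independent, $\widehat{\bm\lambda}$ is uniquely determined by $\widehat{\Z}_{\Upsilon}$, so calling $\widehat{\Z}_{\Upsilon}$ optimal is unambiguous. I would close by noting, via \cref{eq:val-chain}, that the common value also equals $\mathrm{val}\cref{prob:tensor-comp-linear-constr}$.
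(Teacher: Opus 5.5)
Your proposal is correct and follows the paper's proof. Both arguments use the Schur complement to get $\|\widehat{\Z}_{\Upsilon}(\x^*)\|_\sigma\le 1$ from $\x^*\in\Upsilon$, extend this bound to the whole sphere via the maximality of $\x^*$, and convert back to the semidefinite constraints \cref{prob:tensor-comp-dual-sdp}. You also spell out the relaxation step, namely that $\Phi\subseteq\widehat{\Phi}(\Upsilon)$ makes a feasible optimizer of \cref{prob:tensor-comp-dual-app} optimal for \cref{prob:tensor-comp-dual}; the paper leaves that step implicit.
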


\cref{thm:dual-app} gives a condition where \cref{prob:tensor-comp-dual-app} is a tight relaxation of \cref{prob:tensor-comp-dual}.
To understand the problem \cref{eq:find-x-star} in the condition, recall that \cref{prob:tensor-comp-dual} requires the semi-definite constraint to hold for every $\x\in\mathbb{B}^{n_1}$, which by Schur complement is equivalent to $\|\widehat{\Z}(\x)\|_\sigma\leq 1$ for all $\x\in\mathbb{B}^{n_1}$.
Passing to \cref{prob:tensor-comp-dual-app}, it only keeps these constraints at the points of $\Upsilon$ and drops them everywhere else, so the solution $\widehat{\Z}$ may have $\|\widehat{\Z}(\x)\|_\sigma> 1$ at some omitted $\x\not\in\Upsilon$.
The quantity $\|\widehat{\Z}(\x)\|_\sigma$ measures the amount by which the constraint at direction $\x$ is violated, and $\x^*$ gives the direction at which the violation is largest.
In particular, $\|\widehat{\Z}(\x^*)\|_\sigma\leq 1$ certifies that \emph{no} constraint is violated. 

Though \cref{thm:dual-app} is about the property of the optimal solution, 
it also inspires our algorithm design.
For each set $\Upsilon$, we can compute the most-violated direction $\x^*=\arg\max_{\|\x\|=1}\|\widehat{\Z}(\x)\|_\sigma$.
If it is already in $\Upsilon$, the relaxation is tight and we have solved the dual exactly.
We formalize this idea in \cref{alg:adaptive-H}.

\begin{algorithm}[ht]
\caption{Adaptive scheme for tensor recovery with linear constraints}\label{alg:adaptive-H}
\begin{algorithmic}[1]
\Require Constraint tensors $\A_1,\dots,\A_d$, and vector $\mathbf{b}$.
\State Initialize $\Upsilon_0=\{\x_1,\dots,\x_{n_1}\}$ where $\Upsilon_0$ spans $\R^{n_1}$.
\For{$t=0,1,\dots$}
    \State Solve~\cref{prob:tensor-comp-dual-app} with $\Upsilon=\Upsilon_t$ and obtain an optimal solution $(\bm\lambda_t^*,\Z_t^*)$.
    \State Solve $\x_t^*\in\arg\max_{\|\x\|=1}\|\Z_t^*(\x)\|_\sigma$.
    \If{$\x_t^*\in\Upsilon_t$}
        \State $\Upsilon_*=\Upsilon_t$; \textbf{exit loop}.
    \EndIf
    \State $\Upsilon_{t+1}=\Upsilon_t\cup\{\x_t^*\}$.
\EndFor
\State Solve~\cref{prob:primal-linear-constr-discret} with $\Upsilon_*$ and obtain the optimal solution $\widehat{\T}$.
\Ensure the recovered tensor $\widehat{\T}$.
\end{algorithmic}
\end{algorithm}

A difficulty in implementing \cref{alg:adaptive-H} lies in Step 4, i.e., finding $\x_i^*\in\arg\max_{\|\x\|=1}\|\Z_i^*(\x)\|_\sigma$. Although this is still an NP-hard problem of the computation of the tensor spectral norm, there are many efficient numerical methods; see the discussion in~\cite[Section 3.5]{he2026nuclear}. In general, Step 4 is replaced by an {\it approximate} spectral-norm computation (see \cite{anandkumar2017analyzing,friedland2013best,he2010approximation,he2014probability,he2023approximation}).
Our numerical results in \cref{sec:numerical} show that \cref{alg:adaptive-H} performs well even when Step 4 is computed only approximately.

The following theorem establishes the convergence of \cref{alg:adaptive-H}.
The proof of \cref{thm:alg-converge} is in \cref{app:alg-converge}.
\begin{theorem}\label{thm:alg-converge}
Suppose that \cref{prob:tensor-comp-linear-constr} is feasible and that $\mathcal{A}_1,\dots,\mathcal{A}_d$ are linearly independent.
Let $(\bm\lambda_i^*,\Z_i^*)$ be an optimal solution of iteration $i$ in \cref{alg:adaptive-H}, and let $\Psi^*$ be an optimal solution set of~\cref{prob:tensor-comp-dual}. Assume that all $\A_\ell$ are nonzero. 
If \cref{alg:adaptive-H} terminates after finite $i^*$ iterations, then $(\bm\lambda^*_{i^*},\Z^*_{i^*})$ is an optimal solution to \cref{prob:tensor-comp-dual} and $\widehat{\T}$ is an optimal solution to \cref{prob:tensor-comp-linear-constr}. If \cref{alg:adaptive-H} does not terminate in finite iterations, then
\begin{enumerate}
    \item[(i)] any limit point of $\{(\bm\lambda_t^*,\Z_t^*)\}_{t=0}^\infty$ belongs to $\Psi^*$;
    \item[(ii)] $\lim_{t\to\infty}{\bm\lambda_t^*}^\top\mathbf{b}=\mathrm{val}\cref{prob:tensor-comp-dual}$;
    \item[(iii)] let $\T_t^*$ be an optimal solution of~\cref{prob:primal-linear-constr-discret} with $\Upsilon=\Upsilon_t$; then $\lim_{t\to\infty}\|\T_t^*\|_*=\mathrm{val}\cref{prob:tensor-comp-linear-constr}$, i.e., $\T_t^*$ is asymptotically optimal for problem~\cref{prob:tensor-comp-linear-constr}.
\end{enumerate}
\end{theorem}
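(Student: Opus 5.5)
The plan has three parts: the finite-termination case, then a compactness and cutting-plane argument for (i)–(ii), and finally a transfer to the primal for (iii).

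\emph{Finite termination.} If the loop exits at $i^*$, then \cref{thm:dual-app} applied with $\Upsilon=\Upsilon_{i^*}$ shows that $\Z^*_{i^*}$ (with $\bm\lambda^*_{i^*}$) is optimal for \cref{prob:tensor-comp-dual}. Hence $\mathrm{val}\cref{prob:tensor-comp-dual-app}=\mathrm{val}\cref{prob:tensor-comp-dual}$. By \cref{thm:dual-dis}(i) and \cref{eq:val-chain}, the optimal value $\widehat V$ of \cref{prob:primal-linear-constr-discret} with $\Upsilon_*$ equals $\mathrm{val}\cref{prob:tensor-comp-linear-constr}$. By \cref{ineq:app-1}, or more directly by the proof of \cref{thm:nuc-norm-finite-rep}, which shows $\|\widehat\T\|_*\le\widehat V$, we get $\|\widehat\T\|_*\le \mathrm{val}\cref{prob:tensor-comp-linear-constr}$. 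Since $\mathscr{A}(\widehat\T)=\mathbf b$, the tensor $\widehat\T$ is optimal.

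\emph{Infinite case (i)–(ii).} I use the standard cutting-plane argument.
\begin{enumerate}
\item Since $\Upsilon_0\subseteq\Upsilon_t$ spans $\R^{n_1}$, \cref{prop:sol-dual-app} gives that all iterates lie in the compact feasible set of \cref{prob:tensor-comp-dual-app} with $\Upsilon_0$. Linear independence of the $\A_\ell$ makes the map $\Z\mapsto\bm\lambda$ well defined and continuous, so $\bm\lambda_t^*$ is bounded as well. Hence limit points exist.
\item The values $v_t={\bm\lambda_t^*}^\top\mathbf b$ are nonincreasing, because the feasible sets shrink as $\Upsilon_t$ grows. They are bounded below by $\mathrm{val}\cref{prob:tensor-comp-dual}$, because each problem is a relaxation. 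So $v_t$ converges to some $\bar v\ge\mathrm{val}\cref{prob:tensor-comp-dual}$.
\item Take a subsequence $(\bm\lambda_{t_k}^*,\Z_{t_k}^*)\to(\bar{\bm\lambda},\bar\Z)$. By passing to a further subsequence, assume $\x^*_{t_k}\to\bar\x$ on the compact sphere.
\item For $s>t_k$ we have $\x^*_{t_k}\in\Upsilon_s$, so $\|\Z_s^*(\x^*_{t_k})\|_\sigma\le1$. Take $s=t_{k+1}$ and use the inequality
\[
\|\Z^*_{t_k}(\x^*_{t_k})\|_\sigma\le \|\Z^*_{t_{k+1}}(\x^*_{t_k})\|_\sigma+\|\Z^*_{t_k}-\Z^*_{t_{k+1}}\|\le 1+\|\Z^*_{t_k}-\Z^*_{t_{k+1}}\|,
\]
where $\|\Z(\x)\|_\sigma\le\|\Z\|$ (Frobenius norm) for unit $\x$. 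The right-hand side tends to $1$, so $\limsup_k\max_{\|\x\|=1}\|\Z^*_{t_k}(\x)\|_\sigma\le1$.
\item By continuity of $\Z\mapsto\|\Z\|_\sigma$, we get $\|\bar\Z\|_\sigma\le1$. The linear constraint \cref{prob:tensor-comp-dual-lincon} passes to the limit, so $(\bar{\bm\lambda},\bar\Z)$ is feasible for \cref{prob:tensor-comp-dual}.
\item Its objective equals $\bar v\ge\mathrm{val}\cref{prob:tensor-comp-dual}$. Feasibility forces equality, so $(\bar{\bm\lambda},\bar\Z)\in\Psi^*$. This proves (i), and (ii) follows because the monotone sequence $v_t$ has $\bar v$ as its limit.
\end{enumerate}

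\emph{Part (iii).} By \cref{thm:dual-dis}(i), $\widehat V_t:=\mathrm{val}\cref{prob:primal-linear-constr-discret}$ with $\Upsilon_t$ equals $v_t$, and this tends to $\mathrm{val}\cref{prob:tensor-comp-dual}=\mathrm{val}\cref{prob:tensor-comp-linear-constr}$. The tensor $\T_t^*$ is feasible for \cref{prob:tensor-comp-linear-constr}, and $\|\T_t^*\|_*\le\widehat V_t$ by the decomposition argument of \cref{thm:nuc-norm-finite-rep}. This gives the sandwich
\[
\mathrm{val}\cref{prob:tensor-comp-linear-constr}\le\|\T_t^*\|_*\le v_t,
\]
and squeezing yields (iii).

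The main obstacle is the step from "every cut added so far is satisfied" to feasibility of the limit in the infinite case. It needs uniform control in $\x$, which I obtain from the Frobenius bound $\|\Z(\x)\|_\sigma\le\|\Z\|$. It also needs exact maximizers $\x_t^*$, which the theorem assumes in Step 4 of \cref{alg:adaptive-H}. The assumption that every $\A_\ell\neq0$ is used only to keep the map $\Z\mapsto\bm\lambda$ well defined, together with linear independence.
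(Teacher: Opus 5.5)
Your proposal is correct and follows essentially the same cutting-plane argument as the paper: compactness via $\Upsilon_0\subseteq\Upsilon_t$, monotone values $v_t$ bounded below by $\mathrm{val}\cref{prob:tensor-comp-dual}$, feasibility of limit points from the cut $\x^*_{t_k}\in\Upsilon_{t_{k+1}}$ plus exactness of the maximizer, and the sandwich $\mathrm{val}\cref{prob:tensor-comp-linear-constr}\le\|\T_t^*\|_*\le v_t$ for (iii). Your variations are small improvements: the bound $\|\Z(\x)\|_\sigma\le\|\Z\|_F$ makes the extra subsequence $\x^*_{t_k}\to\bar{\x}$ unnecessary (both in your Step 3 and in the paper), and deriving $\|\widehat{\T}\|_*\le\widehat V$ from the proof of \cref{thm:nuc-norm-finite-rep} avoids citing \eqref{ineq:app-1} outside its $\tau$-hitting-set hypothesis and supplies the step the paper leaves implicit in the finite-termination case.
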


\section{Numerical results}\label{sec:numerical}
In this section, we evaluate the computational performance of our proposed methods on both synthetic and real data. 
We focus on tensor completion, an important special case of  \cref{prob:tensor-comp-linear-constr}. 
Our numerical experiments have two objectives. 
First, using synthetic data, we systematically examine the recovery performance of \cref{alg:adaptive-H} across different tensor dimensions and observation rates and compare it with existing tensor completion methods. 
Second, using real highway traffic speed data, we investigate whether the proposed method can effectively recover missing entries in tensors arising from practical applications.

\subsection{Synthetic Data}\label{sec:exp-synth-data}

In this section, we evaluate the performance of \cref{alg:adaptive-H} on synthetic data. 
Specifically, we generate different instances of the tensor completion problems. We solve the problem
\[
\min\{\|\T\|_*:\T(\omega)=\M(\omega)~\forall\, \omega\in\Omega\},~\text{where}~\M\text{ is given},
\]
by \cref{alg:adaptive-H} for these instances.
To gauge the performance of our algorithm, we compare \cref{alg:adaptive-H} with multiple existing tensor completion algorithms.

\textbf{Benchmark algorithms.}
We compare the performance of \cref{alg:adaptive-H} with five existing tensor completion methods: (1) 
The sum of matrix nuclear norms approach (SMNN, \citet[Page~2]{yuan2016tensor}) approximates tensor low-rankness through nuclear-norm regularization based on matrix representations of the tensor;
(2) The sum of nuclear norms of unfoldings approach (SNN, \citet{ref:liu2012tensor}) unfolds the tensor along different modes and minimizes a weighted sum of the nuclear norms of the resulting matrices;
(3)
The Kronecker-Basis-Representation approach (KBR, \citet{8000407}) promotes tensor low-rankness and sparsity through a Kronecker-basis representation of the tensor;
(4)
The Slice Learning approach (SL, \citet{farias2019learning}) exploits structural relations across tensor slices and recovers the missing data through a slice-based learning formulation. 
(5) The t-SVD approach \citep{ref:zhang2016exact} exploits low tubal rank under the tensor singular value decomposition and performs completion using tensor nuclear norm.

\textbf{Ground-truth tensor generation.}
To generate a tensor completion instance, we construct a ground-truth tensor $\M$ and an observation set $\Omega$ containing the indices of its observed entries. 
We generate a low CP-rank tensor $\M\in\R^{n_1\times n\times n}$ by
$\M=\sum_{r=1}^{R} w_r\,\x_r\otimes\y_r\otimes\z_r$, where $w_r$ for $r\in[R]$ are independently drawn from the uniform distribution on $[0,1]$, and
$\x_r\in\R^{n_1}$ and $\y_r,\z_r\in\R^n$ are independent standard Gaussian random vectors. 
We generate the observation set $\Omega$ independently across entries: each entry of $\M$ is included in $\Omega$ with probability $p_{\text{obs}}$ and is treated as missing with probability $1-p_{\text{obs}}$.
Thus, $p_{\text{obs}}$ represents the fraction of tensor entries observed in expectation.

\textbf{Parameter specification and error evaluation.} 
To compare the algorithms under different settings, we take $n_1\in\{3,5\}$, $n\in \{10,11,\dots,30\}$ and $p_{\text{obs}}\in\{0.01,0.02,\dots,0.4\}$.
When $n_1=3$ and $n_1=5$, we let $R=5$ and $R=7$ respectively.

Let $\T^*$ be the completed tensor given by the algorithm.
We use the relative error to evaluate the performance of the above algorithms on tensor completion, which is defined by ${\|\M-\T^*\|_{\text{F}}}/{\|\M\|_{\text{F}}}$.  Under each setting $(n_1,n,R, p_{\text{obs}})$, we repeat our algorithm and the benchmark algorithms $10$ times, and compute the average relative error. The result of the experiment is shown in~\cref{fig:heatmap-1} and \cref{fig:heatmap-2}. We use a grayscale colormap to visualize the performance of the algorithm: 
each color block in the figures corresponds to an experimental setting $(n_1, n, p_{\text{obs}})$, and lighter blocks indicate a smaller average relative recovery error.

Across all methods, the relative errors tend to be large when the observation probability is small, particularly when $p_{\text{obs}}\leq 0.15$. 
As $p_{\text{obs}}$ increases, the recovery error generally decreases, since more information about the ground-truth tensor becomes available. 
The improvement of \cref{alg:adaptive-H} becomes particularly visible for larger values of $n$. 
Among the six methods, \cref{alg:adaptive-H} exhibits the largest region of light-colored cells in both figures, indicating good recovery performance over a broad range of problem instances. 
Overall, the comparison suggests that directly minimizing the tensor nuclear norm using our adaptive SDP approach can lead to substantially smaller recovery errors than the benchmark tensor completion methods.

\begin{figure}[hbtp]
    \centering
    \includegraphics[width=0.8\linewidth]{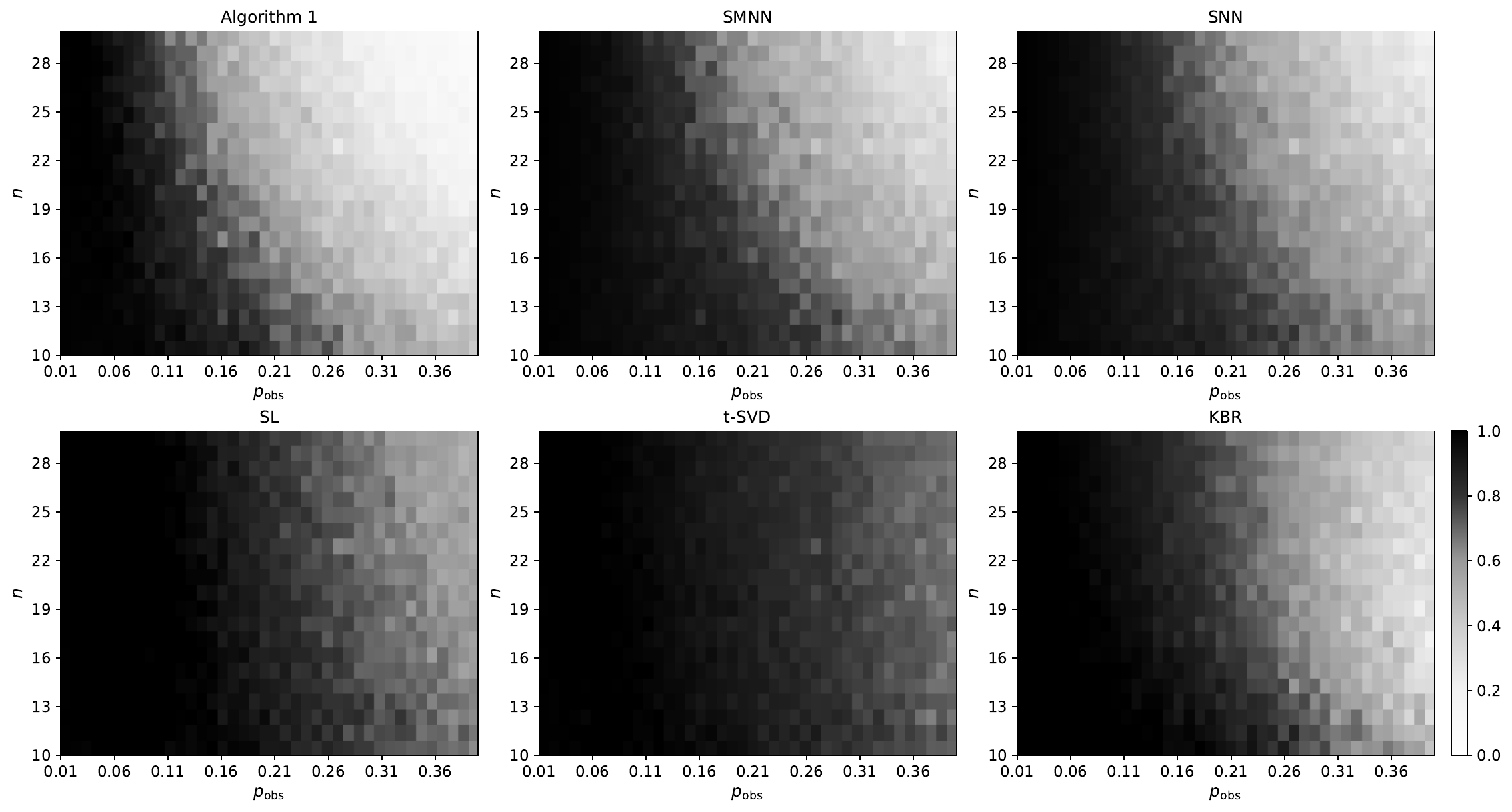}
    \caption{Grayscale colormap for all algorithms under setting $n_1=3, r=5$}
    \label{fig:heatmap-1}
\end{figure}

\begin{figure}[hbtp]
    \centering
    \includegraphics[width=0.8\linewidth]{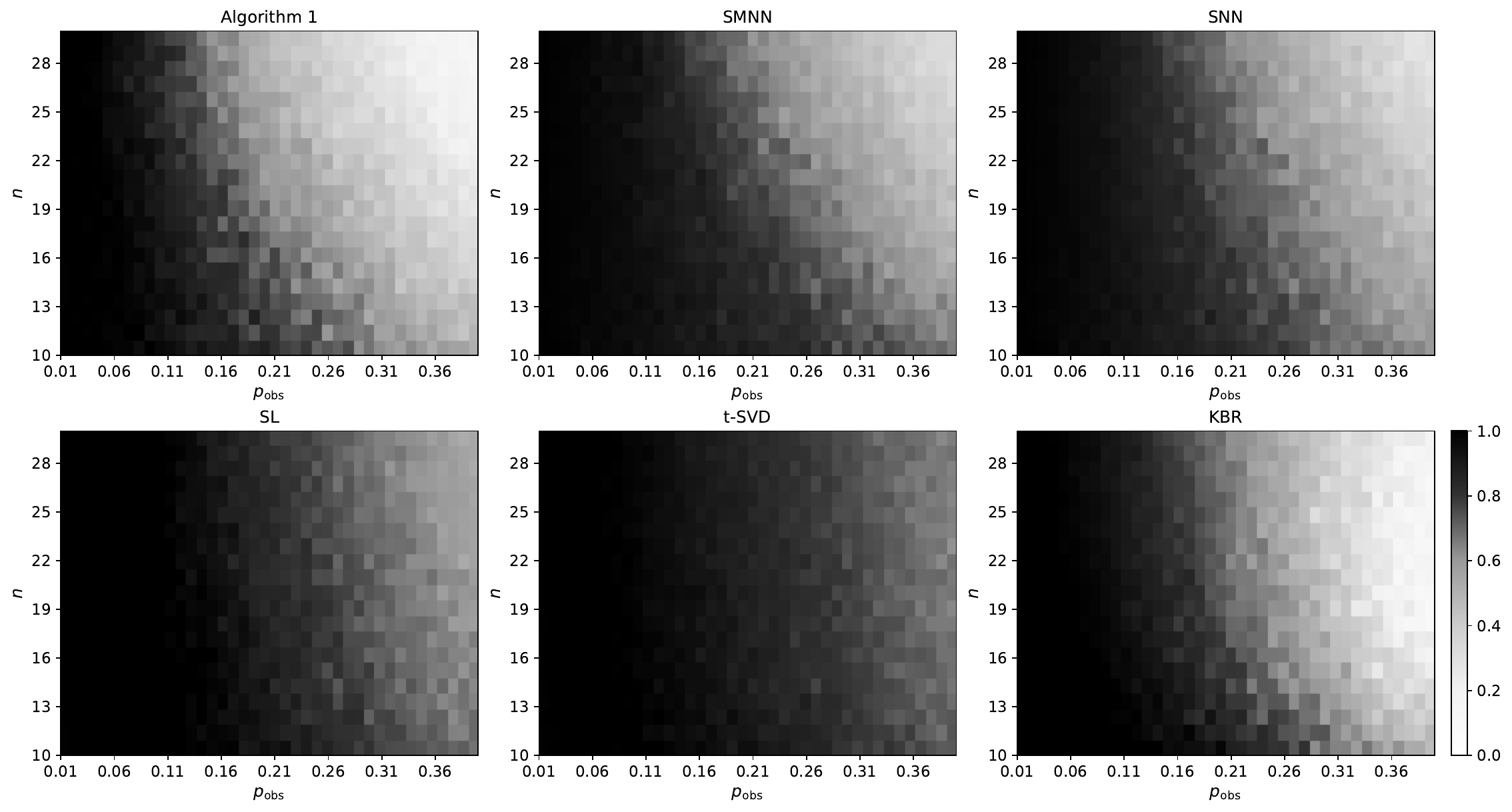}
    \caption{Grayscale colormap for all algorithms under setting $n_1=5, R=7$}
    \label{fig:heatmap-2}
\end{figure}

\subsection{Highway Traffic Speed Data}

In this section, we illustrate the performance of \cref{alg:adaptive-H} on several highway traffic speed datasets. 
We still consider the tensor completion problem.
These datasets contain traffic speed observations collected by detectors deployed on different locations on freeway networks. 
Each detector records the  vehicle speed at fixed time intervals within a day, and the measurements are collected over a sequence of days. The locations of the detectors encode the spatial information of the data
~\citep{ref:chen2021low}.

Our experiments are based on three datasets containing highway speed information at Guangzhou, Seattle and Portland (Data website: \url{https://transdim.github.io/data/}).
For each dataset, we organize it into a third-order tensor of size $D \times T \times N$, where $D$ is the number of days, $T$ is the number of time slots within a day, and $N$ is the total number of detectors. 
The $(d,t,n)$th entry of the tensor gives the traffic speed at detector $n$ during time slot $t$ on day $d$. 
In practice, traffic data have missing values due to reasons like sensor malfunction, communication errors, or data corruption. 
The traffic speed data is also known to have low-rank structure in \cite{ref:chen2021low}. 

The original dataset is large, 
with the Seattle dataset of size $28\times 288 \times 323$, the Guangzhou dataset of size $61\times 144\times 214$, and the Portland dataset of size $31\times 96\times 1156$.
According to the complexity of our algorithm, directly applying our algorithm to the full tensors will be computationally expensive.
Thus we sample a subset of the original data and recover.
Specifically, we randomly sample a subset of detectors.
Then for these selected detectors, we select a subset of days with a fixed temporal interval and choose time within a day with a fixed time interval as well. 
This is to preserve the temporal and spatial structure of the traffic data. 
Eventually, the sizes of the resampled tensors  used in the experiments are $6\times 42\times 40$ for Seattle dataset, $7\times 48\times 40$ for Guangzhou dataset, and $7\times 48\times 40$ for Portland dataset. 

For each dataset, we remove some of the elements for places of missing values and the rest are the observed set.
Let $\Omega$ denote the index set of observed data, and $\Omega$ is generated in the same way as in~\cref{sec:exp-synth-data} with $p_{\text{obs}}\in\{0.3,0.4,0.5\}$. We compare the performance of~\cref{alg:adaptive-H} with all the benchmark algorithms listed in~\cref{sec:exp-synth-data} again. Let $\T^*$ be the completed tensor given by each tensor completion algorithm.
We use the relative error to evaluate the performance of the above algorithms on tensor completion. \cref{tab:traffic-mae_p=0.3} gives the results of the experiments.
One can see that our \cref{alg:adaptive-H} achieves the lowest relative error in most of the experiments.

\begin{table}[hbtp]
\centering
\begin{tabular}{clcccccc}
\toprule
&Dataset & SMNN & \cref{alg:adaptive-H} & SNN & SL & t-SVD & KBR \\
\midrule
\multirow{3}{*}{$p=0.3$}&Seattle   & 5.9773 & \textbf{5.1074} & 5.3097 & 9.7518 & 7.8950  & 5.3932 \\
&Guangzhou   & 3.9645 & \textbf{3.6640} & 3.7173 & 6.5365 & 5.2748  & 6.0046 \\
&Portland & 22.7568 & \textbf{21.9785} & 23.9369 & 39.8669 & 97.5713 & 79.7322 \\
\midrule
\multirow{3}{*}{$p=0.4$}&Seattle   & 5.4878 & \textbf{4.6516} & 4.8327 & 8.4182 & 5.3336 & 5.2421 \\
&Guangzhou   & 3.6386 & \textbf{3.3854} & 3.3926 & 5.8308 & 4.4118 & 5.5848 \\
&Portland & 19.3457 & \textbf{18.3294} & 20.6514 & 36.9739 & 87.2549 & 74.3858 \\
\midrule
\multirow{3}{*}{$p=0.5$}&Seattle   & 5.0817 & {4.5977} & \textbf{4.4405} & 7.7203 & 4.8320 & 4.9665 \\
&Guangzhou   & 3.3781 & 3.1636 & \textbf{3.1479} & 5.1916 & 3.9205 & 5.1827 \\
&Portland & 17.4887 & \textbf{17.0099} & 18.4077 & 35.4466 & 76.0198 & 68.2486 \\
\bottomrule
\end{tabular}
\caption{Relative error on tensor completion for three traffic datasets}
\label{tab:traffic-mae_p=0.3}
\end{table}

\section{Conclusion}\label{sec:conclusion}

We study tensor nuclear norm minimization under general linear constraints and develop a semidefinite programming framework for solving this otherwise computationally difficult problem. Our analysis reveals that the semi-infinite structure induced by the tensor nuclear norm can be reformulated through finitely many directions on the unit sphere, leading to both approximation guarantees and an adaptive asymptotically optimal algorithm. 
Numerical experiments on synthetic and traffic data illustrate the practical potential of the proposed approach.

Several directions remain open. On the computational side, it would be useful to develop more efficient procedures for identifying informative sphere directions, especially for larger-scale tensors and higher-order problems. On the theoretical side, sharper bounds on the size of exact finite supports and stronger finite-convergence guarantees for adaptive schemes would further clarify the tractability of tensor nuclear norm optimization. 

\bibliographystyle{plainnat} %
\bibliography{references} 
\newpage
\appendix
\crefalias{section}{appendix}
\crefalias{subsection}{appendix}
\renewcommand{\theHsection}{A\arabic{section}}
\section{Proof of Lemmas and Theorems}\label{app:sec2}
\subsection{Proof of \texorpdfstring{\cref{lemma:nuclear-rank-ub}}{Lemma \ref*{lemma:nuclear-rank-ub}}}\label{app:nuclear-rank-ub}
\begin{proof}[Proof of \cref{lemma:nuclear-rank-ub}]
    Consider the set 
    $A := \{\x_1\otimes \x_2 \otimes \x_3: \|\x_1\|=\|\x_2\|=\|\x_3\|=1, \x_1\in \R^{n_1},\x_2\in \R^{n_2},\x_3\in \R^{n_3} \}$
    
    and define $K:= \operatorname{conv}(A)$. Then we have 
    \begin{align*}
        \|\T\|_* \;=\; &\min\left\{\sum_{r=1}^R c_r: \T = \sum_{r=1}^R c_r\mathbf{\mathcal{A}}_r, c_r \geq 0, \mathbf{\mathcal{A}}_r \in A\right\}\\
        \;=\; &\min\left\{\Bar{c}: \T = \Bar{c} \sum_{r=1}^R \frac{c_r}{\Bar{c}} \mathbf{\mathcal{A}}_r, c_r\geq 0, \mathbf{\mathcal{A}}_r\in A, \Bar{c}=\sum_{r=1}^R c_r\right\}\\
        \;=\; &\min\left\{\Bar{c}\geq 0: \T = \Bar{c} \K, \K\in K\right\}.
    \end{align*}
    where the first line is from the definition of tensor nuclear norm, and the third line is from the definition of $K$.
    \citet[Proposition 3.1]{ref:friedland2018nuclear} show that the minimizer exist for the above problem.
    Thus, for any tensor $\T\in \R^{n_1\times n_2\times n_3}$, 
    there exists $(t, \K)$ such that $t=\|\T\|_*$ and $\T=t\K$, i.e., 
    $\frac{\T}{\|\T\|_*} \in K$.
    Then by an extension of Carath\'{e}odory's theorem to connected sets \citep[Theorem 18 (ii)]{ref:Eggleston_1958}, there exist $R\leq n_1n_2n_3, c_r\geq 0, \sum_{r=1}^R c_r = 1$ and $\mathbf{\mathcal{A}}_r\in A$, such that 
    $\frac{\T}{\|\T\|_*} = \sum_{r=1}^R c_r \mathbf{\mathcal{A}}_r$.
    
    That is, 
    \begin{align}
        \label{eq:nuclear-rank-decomp}
        \T = \sum_{r=1}^R \lambda_r \mathbf{\mathcal{A}}_r, ~~\lambda_r:= \|\T\|_*c_r,
    \end{align}
    and $\sum_{r=1}^R \lambda_r = \|\T\|_*$, i.e., \cref{eq:nuclear-rank-decomp} is a nuclear decomposition. Hence, we conclude that 
    $\operatorname{rank}_*(\T) \leq n_1n_2n_3$ for any $\T\in \R^{n_1\times n_2\times n_3}$. 
\end{proof}
\subsection{Proof of \texorpdfstring{\cref{lemma:matrix-nuclear-norm}}{Lemma \ref*{lemma:matrix-nuclear-norm}}}\label{app:matrix-nuclear-norm}
\begin{proof}[Proof of \cref{lemma:matrix-nuclear-norm}]
    Without loss of generality, assume that $n_1\leq n_2$.
    To simplify the notation, we denote $\U^\star = \U^\star(\bm\Gamma), \V^\star = \V^\star(\bm\Gamma)$ in the following proof. First note that by definition of matrix nuclear norm, it is obvious that $2\|\bm\Gamma\|_* = \tr(\U^\star) + \tr(\V^\star)$. It remains to show the feasibility and optimality of $\U^\star, \V^\star$. 
    To show feasibility, let
$\bm{\Sigma}:=\mathrm{diag}(\omega_1,\dots,\omega_{n_1})$,
and let $\bQ_1\in\R^{n_1\times n_2}$ consist of the first
$n_1$ rows of $\bQ$.
Since $\D=[\bm{\Sigma}\ \bm 0]$, the SVD of $\bm\Gamma$
and the definitions in \cref{UV-def} give
\[
\bm\Gamma=\P^\top\bm{\Sigma}\bQ_1,
\qquad
\U^\star=\P^\top\bm{\Sigma}\P,
\qquad
\V^\star=\bQ_1^\top\bm{\Sigma}\bQ_1.
\]
Consequently,
\[
\begin{bmatrix}
    \U^\star & \bm\Gamma\\
    \bm\Gamma^\top & \V^\star
\end{bmatrix}
=
\begin{bmatrix}
    \P^\top\\
    \bQ_1^\top
\end{bmatrix}
\bm{\Sigma}
\begin{bmatrix}
    \P^\top\\
    \bQ_1^\top
\end{bmatrix}^{\top}
\succeq \bm 0,
\]
where the inequality follows from $\bm{\Sigma}\succeq\bm 0$.
Therefore, $\U^\star$ and $\V^\star$ are feasible.

    Now we show the optimality of $\U^\star$ and $\V^\star$. It suffices to show that for any feasible $\U$ and $\V$ to the problem~\cref{eq:inf-trace-sum}, it holds that $\tr(\U)+\tr(\V) \geq 2\|\bm\Gamma\|_*$. Let $\p_1,\dots,\p_{n_1}\in \mathbb{R}^{n_1}$ and $\q_1,\dots,\q_{n_1}\in \mathbb{R}^{n_2}$ be the left-singular vectors and right-singular vectors of $\bm\Gamma$. Let $\bm{\zeta}_j := \begin{bmatrix}
        p_j\\-q_j 
    \end{bmatrix}$ for $j=1,\dots,n_1$. Then by the semi-definiteness of $\begin{bmatrix}
        \U &\bm\Gamma\\
        \bm\Gamma^\top &\V
    \end{bmatrix}$, it holds that 
    \begin{align*}
        \bm0\leq \bm\zeta_j^\top \begin{bmatrix}
        \U &\bm\Gamma\\
        \bm\Gamma^\top &\V
    \end{bmatrix}\bm\zeta_j = \p_j^\top \U \p_j + \q_j^\top \V \q_j - \q_j^\top \bm\Gamma^\top \p_j - \p_j^\top \bm\Gamma \q_j = \p_j^\top \U \p_j + \q_j^\top \V \q_j - 2\omega_j,
    \end{align*}
    for $j=1,\dots,n_1$, where the last equality is by the definition of singular vectors.
    Then 
    \begin{align}
        \sum_{j=1}^{n_1} \p_j^\top \U \p_j + \q_j^\top \V \q_j \geq \sum_{j=1}^{n_1}2\omega_j = 2\|\bm\Gamma\|_*,\label{ineq:proof-2Gamma}
    \end{align}
    where the last equality is by the definition of matrix nuclear norm.
    Recall that we want to show that $\tr(\U)+\tr(\V) \geq 2\|\bm\Gamma\|_*$. With \cref{ineq:proof-2Gamma}, it is left to show that $\sum_{j=1}^{n_1} \p_j^\top \U \p_j + \q_j^\top \V \q_j\le  \tr(\U) + \tr(\V)$.
    To see this, we have
    \begin{align}
        \sum_{j=1}^{n_1} \p_j^\top \U 
        \p_j + 
        \q_j^\top \V \q_j \;= \;&\sum_{j=1}^{n_1} \tr(\p_j^\top \U \p_j) + \tr(\q_j^\top \V \q_j)\nonumber\\
        \;= \;&\sum_{j=1}^{n_1} \tr( \U \p_j \p_j^\top) + \tr( \V \q_j \q_j^\top)\nonumber\\
        \;=\;& \tr\left(\U\sum_{j=1}^{n_1} \p_j\p_j^\top\right)+\tr\left(\V\sum_{j=1}^{n_1} \q_j\q_j^\top\right)\nonumber\\
        \;\leq\;& \tr(\U) + \tr(\V). \label{ineq:proof-trU-trV}
    \end{align}
    In the last inequality, we use two facts: (1) $\sum_{j=1}^{n_1} \p_j\p_j^\top=\I_{n_1}$, and $\mathbf{0}\preceq \sum_{j=1}^{n_1} \q_j\q_j^\top \preceq \I_{n_2}$; (2) if $\U\succeq \bm0$ and $\bm 0\preceq \mathbf{W} \preceq \I$, then $\tr(\U\mathbf{W}) \leq \tr(\U)$.
    To see why the second statement holds, we note that 
    \begin{align*}
        \tr(\U\mathbf{W}) = \tr(\U \mathbf{Y}^\top \bm\Pi \mathbf{Y}) = \tr(\mathbf{Y}\U\mathbf{Y}^\top \bm\Pi) \leq \lambda_{\max}(\bm\Pi) \tr(\U) \leq \tr(\U),
    \end{align*}
    where $\mathbf{W}=\mathbf{Y}^\top \bm\Pi \mathbf{Y}$ is the eigenvalue decomposition of $\mathbf{W}$, and $\lambda_{\max}(\bm\Pi)$ stands for the largest eigenvalue of $\bm\Pi$ and $\lambda_{\max}(\bm\Pi)\le 1$.
    Thus, $\sum_{j=1}^{n_1} \p_j^\top \U \p_j + \q_j^\top \V \q_j\le  \tr(\U) + \tr(\V)$ holds and
    it completes the proof.
    
\end{proof}
\subsection{Proof of \texorpdfstring{\cref{thm:nuc-norm-finite-rep-2}}{Theorem \ref*{thm:nuc-norm-finite-rep-2}}.}\label{app:nuc-norm-finite-rep-2}
\begin{proof}[Proof of \cref{thm:nuc-norm-finite-rep-2}]
By \cref{lemma:nuclear-rank-ub} and the definition of tensor nuclear norm,
there exists a nuclear decomposition of $\T$,
\[
\T
=
\sum_{r=1}^{R}
\lambda_r
\x_1^{(r)}\otimes
\x_2^{(r)}\otimes
\x_3^{(r)},
\]
such that
\[
R\leq n_1n_2n_3,\qquad
\|\x_1^{(r)}\|
=
\|\x_2^{(r)}\|
=
\|\x_3^{(r)}\|
=1,
\]
and
\[
\sum_{r=1}^{R}|\lambda_r|=\|\T\|_*.
\]
Let
\[
\Theta:=\left\{
\x_1^{(1)},\dots,\x_1^{(R)}
\right\}\subseteq \mathbb{B}^{n_1}.
\]
Then $|\Theta|\leq R\leq n_1n_2n_3$.

As a relaxation of \cref{prob:nuclear-norm-SDP-formulation}, we have $\text{val}\cref{prob:nn-2}\geq \|\T\|_*$.
It remains to show the reverse inequality $\text{val}\cref{prob:nn-2}\leq \|\T\|_*$. Let $\Z$ be any feasible solution to \cref{prob:nn-2}. For every
$r=1,\dots,R$, since $\x_1^{(r)}\in\Theta$, we have
\[
\begin{bmatrix}
\I_{n_2} &
\Z(\x_1^{(r)})\\
\Z(\x_1^{(r)})^\top &
\I_{n_3}
\end{bmatrix}
\succeq 0.
\]
By the Schur complement, this is equivalent to
\[
\left\|
\Z(\x_1^{(r)})
\right\|_\sigma
\leq 1.
\]
Therefore,
\begin{align*}
\langle \T,\Z\rangle
&=
\sum_{r=1}^{R}
\lambda_r
\left\langle
\x_1^{(r)}\otimes\x_2^{(r)}\otimes\x_3^{(r)},
\Z
\right\rangle\\
&=
\sum_{r=1}^{R}
\lambda_r
\left\langle
\x_2^{(r)},
\Z(\x_1^{(r)})\x_3^{(r)}
\right\rangle\\
&\leq
\sum_{r=1}^{R}
|\lambda_r|
\left\|
\Z(\x_1^{(r)})
\right\|_\sigma\\
&\leq
\sum_{r=1}^{R}
|\lambda_r|
=
\|\T\|_*,
\end{align*}
where the first inequality follows from
$\|\x_2^{(r)}\|=\|\x_3^{(r)}\|=1$ and the definition of the
matrix spectral norm.
Since the above inequality holds for every feasible $\Z$, we have
$\text{val}\cref{prob:nn-2}\leq\|\T\|_*$, which completes the proof.

\end{proof}

\subsection{Proof of \texorpdfstring{\cref{prop:reg-primal-dis}}{Proposition \ref*{prop:reg-primal-dis}}}\label{app:reg-primal-dis}
To prove \cref{prop:reg-primal-dis}, we first prove the following lemma.
\begin{lemma}\label{lemma:rank1-decomp}
    For any $\T\in\R^{n_1\times n_2\times n_3}$ and any linearly independent set of vectors $\bm\xi_1,\dots,\bm\xi_{n_1}\in\R^{n_1}$, there exist matrices $\mathbf{Y}_1,\dots,\mathbf{Y}_{n_1}\in \R^{n_2\times n_3}$ such that 
    \[
        \T = \sum_{j=1}^{n_1} \bm\xi_i\otimes \mathbf{Y}_i.
    \]
\end{lemma}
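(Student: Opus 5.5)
The statement is a change-of-basis fact in the first mode. The plan is to write $\T$ in the standard basis of $\R^{n_1}$ and then re-express that basis in terms of $\bm\xi_1,\dots,\bm\xi_{n_1}$. Throughout, the index in the displayed sum is read as $j$, so the claim is $\T=\sum_{j=1}^{n_1}\bm\xi_j\otimes\mathbf{Y}_j$.

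\textbf{Step 1 (standard decomposition).} With $\mathbf{e}_1,\dots,\mathbf{e}_{n_1}$ the standard basis, the slice decomposition $\T=(\BT_1,\dots,\BT_{n_1})$ gives $\T=\sum_{k=1}^{n_1}\mathbf{e}_k\otimes\BT_k$. This holds entrywise because $(\mathbf{e}_k\otimes\BT_k)_{abc}=\delta_{ak}(\BT_k)_{bc}$.

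\textbf{Step 2 (change of basis).} Form the matrix $\bm\Xi=[\bm\xi_1,\dots,\bm\xi_{n_1}]\in\R^{n_1\times n_1}$. Its columns are linearly independent, so it is invertible. Let $\mathbf{C}=\bm\Xi^{-1}$, so that $\mathbf{e}_k=\sum_{j}C_{jk}\bm\xi_j$ for every $k$.

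\textbf{Step 3 (substitute and regroup).} Substituting into Step 1 and using bilinearity of $\otimes$ gives
\[
\T=\sum_{k}\sum_{j}C_{jk}\,\bm\xi_j\otimes\BT_k=\sum_{j}\bm\xi_j\otimes\Big(\sum_k C_{jk}\BT_k\Big).
\]
So the choice $\mathbf{Y}_j:=\sum_{k=1}^{n_1}C_{jk}\BT_k$ works.

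There is no real obstacle here. The only points to check are the bilinearity of the outer product $\x\otimes\mathbf{M}$ in both arguments and the invertibility of $\bm\Xi$. One could equivalently note that the map $(\mathbf{Y}_j)_j\mapsto\sum_j\bm\xi_j\otimes\mathbf{Y}_j$ is a linear bijection between spaces of equal dimension $n_1n_2n_3$. This lemma then feeds \cref{prop:reg-primal-dis}, since choosing $n_1$ independent points of $\Upsilon$ gives a feasible decomposition for any feasible $\T$.
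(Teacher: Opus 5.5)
Your proposal is correct and is essentially the paper's argument. Both proofs invert $[\bm\xi_1,\dots,\bm\xi_{n_1}]$, and your choice $\mathbf{Y}_j=\sum_k C_{jk}\BT_k$ agrees entrywise with the paper's $\mathbf{Y}_j(j_2,j_3)=\bm\eta^{j_2,j_3}(j)$. The only difference is the verification step: you substitute $\mathbf{e}_k=\sum_j C_{jk}\bm\xi_j$ into the slice decomposition and use bilinearity, whereas the paper checks the identity fiber by fiber as $\T'(\cdot,j_2,j_3)=\bm X\bm\eta^{j_2,j_3}$.
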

\begin{proof}[Proof of \cref{lemma:rank1-decomp}]
    We prove by construction. Let $\bm X := [\bm\xi_1,\dots,\bm\xi_{n_1}]\in\R^{n_1\times n_1}$. 
    Then $\bm X$ is invertible. Denote $\T(j_1,j_2,j_3)$ to be the $(j_1,j_2,j_3)$th entry of $\T$ and denote $\T(\cdot, j_2,j_3):=(\T(1,j_2,j_3),\dots,\T(n_1,j_2,j_3))^\top\in\R^{n_1}$. Denote also $\mathbf{Y}_j(j_2,j_3)$ to be the $(j_2,j_3)$th entry of $\mathbf{Y}_i$. For $j\in\{1,\dots,n_1\}$, construct $\mathbf{Y}_j$ by
    \[
        \mathbf{Y}_j(j_2,j_3) = \bm\eta^{j_2,j_3}(j),~(j_2,j_3)\in [n_2]\times [n_3],
    \]
    where $ \bm\eta^{j_2,j_3}:=\bm X^{-1}\T(\cdot, j_2,j_3)\in \R^{n_1}$, 
    and $\bm\eta^{j_2,j_3}(j)$ is the $j$th entry of $\bm\eta^{j_2,j_3}$. Let 
    \[
        \T' := \sum_{j=1}^{n_1} \bm\xi_i\otimes \mathbf{Y}_j.
    \]
    Now we show that $\T = \T'$.
    Note that by definition of outer product, for $(j_2,j_3)\in\{1,\dots,n_2\}\times \{1,\dots,n_3\}$, it holds that
    \[
        \T'(\cdot,j_2,j_3) = \sum_{j=1}^{n_1} \bm\xi_i \mathbf{Y}_j(j_2,j_3) = [\bm\xi_1,\dots,\bm\xi_{n_1}] (\mathbf{Y}_1(j_2,j_3),\dots,\mathbf{Y}_{n_1}(j_2,j_3))^\top = \bm X \bm\eta^{j_2,j_3} = \T(\cdot,j_2,j_3).
    \]
    This verifies the construction and completes the proof. 
\end{proof}
\begin{proof}[Proof of \cref{prop:reg-primal-dis}]
Let $\overline{\T}$ be any tensor satisfying $\mathscr{A}(\overline{\T}) = \mathbf{b}$.
We first show that the feasible region of \cref{prob:primal-linear-constr-discret} is nonempty.
Since $\operatorname{span}(\Upsilon)=\mathbb R^{n_1}$, by \cref{lemma:rank1-decomp}, there exist matrices
$\overline{\bm\Gamma}_1,\dots,\overline{\bm\Gamma}_m\in\mathbb R^{n_2\times n_3}$ such that
\[
    \overline{\T}
    =
    \sum_{i=1}^m \x_i\otimes \overline{\bm\Gamma}_i .
\]
For each $i=1,\dots,m$, let
$s_i>0$ and define
\[
    \overline{\bm\Lambda}_i
    :=
    \begin{bmatrix}
        s_i \mathbf{I}_{n_2} & \overline{\bm\Gamma}_i \\
        \overline{\bm\Gamma}_i^\top & s_i \mathbf{I}_{n_3}
    \end{bmatrix}.
\]
By setting $s_i$ sufficiently large, $\overline{\bm\Lambda}_i\succeq 0$ for every
$i=1,\dots,m$. Therefore,
$(\overline{\T},\{\overline{\bm\Lambda}_i\}_{i=1}^m)$ is feasible to~\cref{prob:primal-linear-constr-discret}.
Thus the feasible region of~\cref{prob:primal-linear-constr-discret} is nonempty.
Moreover, since each feasible $\bm\Lambda_i$ is positive semidefinite, we have
$\operatorname{tr}(\bm\Lambda_i)\ge 0$, and hence $\text{val}\cref{prob:primal-linear-constr-discret}\geq 0$.

Now we show that an optimal solution exists. Let $\{(\T^k,\{\bm\Lambda_i^k\}_{i=1}^m)\}_{k\ge 1}$ be a convergent sequence that is feasible to~\cref{prob:primal-linear-constr-discret} and 
\[
    \lim_{i\to\infty} \frac{1}{2}\sum_{i=1}^m \tr(\bm\Lambda_i^k) = \text{val}\cref{prob:primal-linear-constr-discret}.
\]
We first show that the sequence is bounded. 
 Since the
optimal value is finite, there exists a constant $C>0$ such that, for all
sufficiently large $k$,
\[
    0\le \frac12\sum_{i=1}^m \operatorname{tr}(\bm\Lambda_i^k)\le C .
\]
Therefore, $\{\bm\Lambda_i^k\}_{k\ge 1}$ is bounded for every $i$. In particular, the sequence $\{\T^k\}_{k\ge 1}$ is also bounded, since
\[
    \T^k
    =
    \sum_{i=1}^m \x_i\otimes \bm\Gamma_i^k
\]
and $\{\bm\Gamma_i^k\}_{k\ge 1}$ is bounded for every $i$.

Now, without loss of generality, let $\lim_{k\to\infty}(\T^k,\{\bm\Lambda_i^k\}_{i=1}^m) = (\T^*, \{\bm\Lambda_i^*\}_{i=1})$. We note that $(\T^*, \{\bm\Lambda_i^*\}_{i=1})$ is also feasible to \cref{prob:primal-linear-constr-discret}, since the feasible set is closed. Finally, by the continuity of the
objective function,
\[
    \frac12\sum_{i=1}^m \operatorname{tr}(\bm\Lambda_i^*)
    = \lim_{k\to\infty} \frac{1}{2}\sum_{i=1}^m \tr(\bm\Lambda_i^k) = \text{val}\cref{prob:primal-linear-constr-discret}.
\]
Therefore, \cref{prob:primal-linear-constr-discret} admits an optimal solution, which is $(\T^*, \{\bm\Lambda_i^*\}_{i=1})$. 
\end{proof}
\subsection{Proof of \texorpdfstring{\cref{thm:approximation-ratio}}{Theorem \ref*{thm:approximation-ratio}}}\label{app:approximation-ratio}
\begin{proof}[Proof of \cref{thm:approximation-ratio}]
    Recall that the tensor nuclear norm of any tensor $\T$ can be calculated by ~\cref{prob:nuclear-norm-SDP-formulation}.
    For a given set $\Upsilon$, define a relaxation of~\cref{prob:nuclear-norm-SDP-formulation} by 
    \begin{align}\label{prob:tau-approximation-of-nuclear-norm}
        f(\T, \Upsilon)=\max_{\Z} \;\;&\la \T,\Z\ra\nonumber\\
        \text{s.t.}\;\;&\begin{bmatrix}
            \I_{n_2} &\Z(\x_i)\\
            \Z(\x_i)^\top &\I_{n_3}
        \end{bmatrix}\succeq 0\;\;\forall\, i=1,\dots,m.
    \end{align}
    The dual problem of~\cref{prob:tau-approximation-of-nuclear-norm} is given by
    \begin{align}
        \min_{\bm\Lambda_1,\dots,\bm\Lambda_m}\;\; &\sum_{i=1}^m \tr\big(\bm\Lambda_i\big) \nonumber\\
        \text{s.t.} \;\;
        &\bm\Lambda_i=\begin{bmatrix}
    \bm\Lambda^{(1)}_i &\bm\Gamma_i\\
    \bm\Gamma_i^\top&\bm\Lambda^{(2)}_i
    \end{bmatrix}\succeq 0~~i=1,\dots,m,\nonumber\\
        &\T+2\sum_{i=1}^m \x_i\otimes\bm\Gamma_i = \bm 0.\label{prob:dual-f}
    \end{align}
    Note that the Slater's condition holds for~\cref{prob:tau-approximation-of-nuclear-norm}, since $\Z=\bm 0$ is always strictly feasible, thus strong duality holds for \cref{prob:tau-approximation-of-nuclear-norm,prob:dual-f}.
    So we have that problem~\cref{prob:primal-linear-constr-discret} is equivalent to
    \begin{align}
        \min_{\T}\;\;&f(\T, \Upsilon)\nonumber\\
        \text{s.t.}\;\;&\mathscr{A}(\T)=\mathbf{b}.\label{prob:min-f}
    \end{align}
    Then by definition, $f(\widehat{\T}, \Upsilon) = \widehat{V}$. Moreover, \cite[Theorem 3.8]{he2023approximation} reveals that $f(\widehat{\T}, \Upsilon)$ is a $\tau$-approximation of tensor nuclear norm, i.e., 
    \begin{equation}\label{thm3-ineq-1}
         \tau \widehat{V}=\tau f(\widehat{\T}, \Upsilon) \leq \|\widehat{\T}\|_*\le f(\widehat{\T}, \Upsilon)=\widehat{V}
    \end{equation}
    This completes the proof of~\cref{ineq:app-1}.

    Now we prove inequality~\cref{ineq:app-2}. As $\T^*$ is the optimal solution of \cref{prob:tensor-comp-linear-constr} and $\widehat{\T}$ is feasible for~\cref{prob:tensor-comp-linear-constr}, we conclude that the first inequality holds, i.e., we have $\|\T^*\|_* \leq  \|\widehat{\T}\|_*$.
    The other side can be proven as follows. Since $\widehat{\T}$ is optimal for~\cref{prob:min-f} and $\T^*$ is feasible for \cref{prob:min-f}, we have $f(\widehat{\T},\Upsilon) \leq f({\T^*},\Upsilon)$. Then it holds that
    $$
        \tau\|\widehat{\T}\|_* \overset{\cref{thm3-ineq-1}}{\le} \tau f(\widehat{\T},\Upsilon)\leq \tau f(\T^*, \Upsilon)\leq \|\T^*\|_*
    $$
    The last inequality is also by~\cite[Theorem 3.8]{he2023approximation}. Thus we have $\tau\|\widehat{\T}\|_*\leq \|\T^*\|_*$, and it completes the proof.
    
\end{proof}
\subsection{Proof of \texorpdfstring{\cref{prop:sol-dual-app}}{Proposition \ref*{prop:sol-dual-app}}}\label{app:sol-dual-app}
\begin{proof}[Proof of \cref{prop:sol-dual-app}]

    Note that once we prove claim (i), claim (ii) is implied, because a linear function attains its maximum over a compact set.
    Now we prove (i). To show compactness, it suffices to show closeness and boundedness. Note that the closedness of the feasible region is obvious given the constraint form. It is left to show the boundedness. Without loss of generality, we assume that $\Upsilon=\{\x_1,\dots,\x_{n_1}\}$ and $\operatorname{span}(\Upsilon) = \R^{n_1}.$
    We first show that
    \begin{equation}\label{eq:C0-bound}
    \|\mathcal{Z}\|_\sigma\;\le\; C_0:= \sqrt{n_1}\,
    \|\mathbf{X}^{-1}\|_\sigma~\text{for all $\Z$ satisfying \cref{constr:dual-app-sdp},}
    \end{equation}
    where matrix $\mathbf{X}:=[\mathbf{x}_1,\dots,\mathbf{x}_{n_1}]$
    collects the points in $\Upsilon$. 
    Note that $\bm\lambda$ is the image of a continuous mapping of $\Z$, therefore, once we prove \cref{eq:C0-bound}, the boundedness of $\bm\lambda$ is also proven.
    We prove \cref{eq:C0-bound} by showing that 
    \begin{equation}\label{eq:C0-bound-1}
        \|\Z(\x)\|_\sigma \leq \sqrt{n_1}\|\mathbf{X}^{-1}\|_\sigma~~\forall\, \x\in\mathbb{B}^{n_1}.
    \end{equation} 
    For any $\x_i\in \Upsilon$, via
    the Schur complement \cref{eq:schur-spec}, \cref{constr:dual-app-sdp} is equivalent to
    $\|\mathcal{Z}(\mathbf{x}_i)\|_\sigma\le 1$. Note that every $\mathbf{x}\in\mathbb{B}^{n_1}$ can be written as    $\mathbf{x}=\sum_{j=1}^{n_1}\alpha_{j}\mathbf{x}_{j}$ with
    $\boldsymbol{\alpha}=\mathbf{X}^{-1}\mathbf{x}$. Since the
    mapping $\x\mapsto \mathcal{Z}(\,\x\,)$ is linear in $\x$, the triangle inequality gives
    \[
    \|\mathcal{Z}(\mathbf{x})\|_\sigma
    \le \sum_{j=1}^{n_1}|\alpha_j|\,
       \|\mathcal{Z}(\mathbf{x}_j)\|_\sigma
    \le \|\boldsymbol{\alpha}\|_1
    \le \sqrt{n_1}\,\|\boldsymbol{\alpha}\|_2
    \le \sqrt{n_1}\,\|\mathbf{X}^{-1}\|_\sigma,
    \]
    where the second inequality is by $\|\mathcal{Z}(\mathbf{x}_j)\|_\sigma\le 1$, and the last inequality is by $\|\boldsymbol{\alpha}\|_2 = \|\mathbf{X}^{-1}\x\|_2\leq \|\mathbf{X}^{-1}\|_{\sigma}\|\x\|_2=\|\mathbf{X}^{-1}\|$.
    Therefore, \cref{eq:C0-bound-1} holds and \cref{eq:C0-bound} follows. 
\end{proof}
\subsection{Proof of \texorpdfstring{\cref{thm:dual-dis}}{Theorem \ref*{thm:dual-dis}}}\label{app:dual-dis}
To prove \cref{thm:dual-dis}, we first prove the following lemma.
\begin{lemma}\label{lemma:ZT}
    Assume that $\mathcal{A}_1,\dots,\mathcal{A}_d$ are linearly independent.
    Let $\mathbf{G}\in\mathbb{R}^{d\times d}$ be the Gram matrix with entries $\mathbf{G}_{\ell\ell'} =\langle\mathcal{A}_\ell,\mathcal{A}_{\ell'}\rangle$.
    For any $\T$ satisfying $\mathscr{A}(\T) = b$ and $(\bm\lambda, \mathcal{Z})$ satisfying $\Z=\sum_{\ell=1}^d\bm\lambda_\ell\A_\ell$, it holds that 
    \begin{equation}
        \label{eq:ZT}
        \boldsymbol{\lambda}^\top\mathbf{b}=\langle \mathcal{Z},\mathcal{T}\rangle.
    \end{equation}
    Moreover, for any $(\bm\lambda, \mathcal{Z})$ satisfying $\Z=\sum_{\ell=1}^d\bm\lambda_\ell\A_\ell$,
    \begin{equation}\label{eq:G-inv}
        \bm\lambda = \mathbf{G}^{-1} \begin{pmatrix}
            \la \mathcal{A}_1, \mathcal{Z}\ra\\
            \la \mathcal{A}_2, \mathcal{Z}\ra\\
            \vdots\\
            \la \mathcal{A}_d, \mathcal{Z}\ra
        \end{pmatrix}.
    \end{equation}
\end{lemma}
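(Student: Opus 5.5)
The statement to prove is \cref{lemma:ZT}. Both identities follow from the definition $[\mathscr{A}(\T)]_\ell=\la\A_\ell,\T\ra$, the bilinearity of the inner product, and the invertibility of the Gram matrix $\mathbf{G}$; no earlier result of the paper is needed beyond these definitions.

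For \cref{eq:ZT}, take any $\T$ with $\mathscr{A}(\T)=\mathbf{b}$ and any $(\bm\lambda,\Z)$ with $\Z=\sum_{\ell=1}^d\bm\lambda_\ell\A_\ell$. Substituting and expanding by linearity of the inner product in its first argument gives
\[
\la\Z,\T\ra=\sum_{\ell=1}^d\bm\lambda_\ell\la\A_\ell,\T\ra=\sum_{\ell=1}^d\bm\lambda_\ell\,[\mathscr{A}(\T)]_\ell=\sum_{\ell=1}^d\bm\lambda_\ell\mathbf{b}_\ell=\bm\lambda^\top\mathbf{b}.
\]

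For \cref{eq:G-inv}, take the inner product of both sides of $\Z=\sum_{\ell'}\bm\lambda_{\ell'}\A_{\ell'}$ with each $\A_\ell$. This yields
\[
\la\A_\ell,\Z\ra=\sum_{\ell'=1}^d\mathbf{G}_{\ell\ell'}\bm\lambda_{\ell'},\qquad \ell=1,\dots,d,
\]
that is, $\mathbf{G}\bm\lambda=(\la\A_1,\Z\ra,\dots,\la\A_d,\Z\ra)^\top$.

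The only point that needs an argument is that $\mathbf{G}$ is invertible; I expect this step to be the main (though mild) obstacle. Suppose $\mathbf{G}\mathbf{v}=\bm 0$. Then
\[
0=\mathbf{v}^\top\mathbf{G}\mathbf{v}=\Big\|\sum_{\ell=1}^d\mathbf{v}_\ell\A_\ell\Big\|_F^2,
\]
so $\sum_\ell\mathbf{v}_\ell\A_\ell=\bm 0$. Linear independence of $\A_1,\dots,\A_d$ then forces $\mathbf{v}=\bm 0$. Hence $\mathbf{G}$ is positive definite and in particular invertible. Multiplying the linear system by $\mathbf{G}^{-1}$ gives \cref{eq:G-inv}, and as a byproduct $\bm\lambda$ is uniquely determined by $\Z$.
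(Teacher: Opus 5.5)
Your proof is correct and follows essentially the same route as the paper: expand $\la\Z,\T\ra$ by linearity to get $\bm\lambda^\top\mathbf{b}$, and take inner products with each $\A_\ell$ to obtain $\mathbf{G}\bm\lambda=(\la\A_1,\Z\ra,\dots,\la\A_d,\Z\ra)^\top$, then invert $\mathbf{G}$. The only difference is that you justify the invertibility of $\mathbf{G}$ via $\mathbf{v}^\top\mathbf{G}\mathbf{v}=\|\sum_\ell \mathbf{v}_\ell\A_\ell\|_F^2$, whereas the paper simply asserts $\mathbf{G}\succ 0$ from linear independence.
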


\begin{proof}[Proof of \cref{lemma:ZT}]
    
    By the construction of $\bm G$, we have
    \[
        \mathbf{G}\boldsymbol{\lambda} = \begin{pmatrix}
            \la \mathcal{A}_1, \sum_{\ell=1}^d\bm\lambda_\ell\A_\ell\ra\\
            \la \mathcal{A}_2, \sum_{\ell=1}^d\bm\lambda_\ell\A_\ell\ra\\
            \vdots\\
            \la \mathcal{A}_d, \sum_{\ell=1}^d\bm\lambda_\ell\A_\ell\ra
        \end{pmatrix} 
        = \begin{pmatrix}
            \la \mathcal{A}_1, \mathcal{Z}\ra\\
            \la \mathcal{A}_2, \mathcal{Z}\ra\\
            \vdots\\
            \la \mathcal{A}_d, \mathcal{Z}\ra
        \end{pmatrix}.
    \]
    Since $\mathcal{A}_1,\dots,\mathcal{A}_d$ are linearly independent,
    $\mathbf{G}\succ 0$.
    Hence \cref{eq:G-inv} holds.

    To show \cref{eq:ZT}, we note that 
        \begin{equation}\label{eq:obj-identity}
    \boldsymbol{\lambda}^\top\mathbf{b}
    =\boldsymbol{\lambda}^\top\mathscr{A}(\mathcal{T})
    =\sum_{\ell=1}^{d}\boldsymbol{\lambda}(\ell)\,
       \langle \mathcal{A}_\ell,\mathcal{T}\rangle
    =\left\langle \sum_{\ell=1}^{d}\boldsymbol{\lambda}(\ell)
       \mathcal{A}_\ell,\;\mathcal{T}\right\rangle
    =\langle \mathcal{Z},\mathcal{T}\rangle, 
    \end{equation}
    where the second equality is by the definition of $\mathscr{A} (\T)$, and the last one is by $\Z=\sum_{\ell=1}^d\bm\lambda_\ell\A_\ell$. 
    
\end{proof}
\begin{proof}[Proof of \cref{thm:dual-dis}]
\emph{(i)} We derive the dual of~\cref{prob:tensor-comp-dual-app} by a standard Lagrangian argument.
Recall that $\Upsilon=\{\x_1,\dots,\x_m\}$, and we introduce multipliers $\bm\Lambda_i=\begin{bmatrix}\bm\Lambda_i^{(1)}&\bm\Gamma_i\\ \bm\Gamma_i^\top&\bm\Lambda_i^{(2)}\end{bmatrix}\succeq\bm 0$ for the $m$ semi-definite constraints.
Dualizing these constraints, problem~\cref{prob:tensor-comp-dual-app} can be written as
\begin{align*}
    \max_{\bm\lambda,\Z}\min_{\bm\Lambda_i\succeq\bm 0}~\bm\lambda^\top\mathbf{b}+\sum_{i=1}^m\left\la\bm\Lambda_i,\begin{bmatrix}
        \I_{n_2} &\Z(\x_i)\\
        \Z(\x_i)^\top &\I_{n_3}
    \end{bmatrix}\right\ra
    \quad\text{s.t.}~~\Z=\sum_{\ell=1}^d\bm\lambda_\ell\A_\ell.
\end{align*}
Exchanging the order of the max and min, the dual problem is
\begin{align}
    \min_{\bm\Lambda_i\succeq\bm 0}\left\{\max_{\bm\lambda,\Z}~\bm\lambda^\top\mathbf{b}+\sum_{i=1}^m\left\la\bm\Lambda_i,\begin{bmatrix}
        \I_{n_2} &\Z(\x_i)\\
        \Z(\x_i)^\top &\I_{n_3}
    \end{bmatrix}\right\ra
    \quad\text{s.t.}~~\Z=\sum_{\ell=1}^d\bm\lambda_\ell\A_\ell\right\}.\label{prob:lg-dual}
\end{align}
Now we prove that \cref{prob:lg-dual} is equivalent to \cref{prob:primal-linear-constr-discret}. We do this by first analyzing the inner maximization problem.
To facilitate the analysis of the inner maximization problem, let $\T'$ be any tensor satisfying $\mathscr{A}(\T') = \mathbf{b}$ but fixed. Then we have
\begin{align} 
&\max_{\bm\lambda,\Z}~\left\{\bm\lambda^\top\mathbf{b}+\sum_{i=1}^m\left\la\bm\Lambda_i,\begin{bmatrix}
        \I_{n_2} &\Z(\x_i)\\
        \Z(\x_i)^\top &\I_{n_3}
    \end{bmatrix}\right\ra
    \quad\text{s.t.}~~\Z=\sum_{\ell=1}^d\bm\lambda_\ell\A_\ell\right\}\nonumber\\
    \overset{\cref{eq:ZT}}{=}~&\max_{\bm\lambda,\Z}~\left\{\la \Z, \T'\ra + \sum_{i=1}^m \tr(\bm\Lambda_i) + \sum_{i=1}^m 2\la \x_i\otimes \bm\Gamma_i,\Z\ra\quad\text{s.t.}~~\Z=\sum_{\ell=1}^d\bm\lambda_\ell\A_\ell\right\}\nonumber\\
    =~&\max_{\bm\lambda,\Z}~ \left\{\sum_{i=1}^m \tr(\bm\Lambda_i) + \left\la \Z, \T' + 2\sum_{i=1}^m \x_i \otimes \bm\Gamma_i\right\ra\quad\text{s.t.}~~ \Z=\sum_{\ell=1}^d\bm\lambda_\ell\A_\ell\right\}\nonumber\\
    =~&\max_{\bm\lambda}~ \left\{\sum_{i=1}^m \tr(\bm\Lambda_i) + \left\la \sum_{\ell=1}^d\bm\lambda_\ell\A_\ell, \T' + 2\sum_{i=1}^m \x_i \otimes \bm\Gamma_i\right\ra\right\}\nonumber\\
    =~&\max_{\bm\lambda}~ \left\{\sum_{i=1}^m \tr(\bm\Lambda_i) +\sum_{\ell=1}^d\bm\lambda_\ell \left\la \A_\ell, \T' + 2\sum_{i=1}^m \x_i \otimes \bm\Gamma_i\right\ra\right\}\label{prob:inner-2}
\end{align}
The first equality is by the choice of $\T'$ and \cref{lemma:ZT}. 
Since $\bm\lambda\in \R^d$ is unrestricted, the optimal value of \cref{prob:inner-2} is finite if and only if 
\[
    \left\la \mathcal{A}_\ell, \T' +2\sum_{i=1}^m \x_i \otimes \bm\Gamma_i\right\ra = 0, \quad \text{for all}~\ell=1,\dots,d.
\]
By the definition of operator $\mathscr{A}(\cdot)$, we can rewrite the relation as
\[
    \mathscr{A}\left(\T' +2\sum_{i=1}^m \x_i \otimes \bm\Gamma_i\right) = \bm 0.
\]
Because $\mathscr{A}(\T') = \mathbf{b}$, for the above to hold, we must have
\[
    \mathscr{A}\left(2\sum_{i=1}^m \x_i \otimes \bm\Gamma_i\right) =-  \mathscr{A}(\T')=-\mathbf{b}.
\]
Hence, we conclude that 
\begin{align*}
    \text{val}\cref{prob:inner-2} = \left\{\begin{matrix}
        \sum_{i=1}^m \tr(\bm\Lambda_i), &\mathscr{A}\left(2\sum_{i=1}^m \x_i \otimes \bm\Gamma_i\right) = -\mathbf{b}\\
        +\infty,&\text{o.w.}
        \end{matrix}\right.
\end{align*}
Now we look at the minimax problem in \cref{prob:lg-dual}. 
Letting $\T=-2\sum_{i=1}^m \x_i \otimes \bm\Gamma_i$, we can rewrite \cref{prob:lg-dual} as
\begin{align*}
    \min_{\bm\Lambda_i,\T}~\sum_{i=1}^m\tr(\bm\Lambda_i)\quad\text{s.t.}~~\bm\Lambda_i\succeq\bm 0,~~\T+2\sum_{i=1}^m\x_i\otimes\bm\Gamma_i=\bm 0,~~\mathscr{A}(\T)=\mathbf{b},
\end{align*}
which is exactly~\cref{prob:primal-linear-constr-discret} by rescaling the decision variables $\bm\Lambda_1,\dots,\bm\Lambda_m$.
Because $\bm\lambda=\bm 0, \Z=\bm 0$ is strictly feasible for~\cref{prob:tensor-comp-dual-app}, Slater's condition holds and strong duality follows.

\emph{(ii)}  We prove the two directions separately.
We first prove the ``if'' direction. Let
$(\T^*,\Theta^*,\bm\Lambda^*)$ be an optimal solution to
\cref{prob:GenLC-finite-rep} such that $\Upsilon=\Theta^*$.
By the construction of \cref{prob:primal-linear-constr-discret},
fixing $\Theta=\Theta^*=\Upsilon$ gives a feasible solution to
\cref{prob:primal-linear-constr-discret} with the same objective value.
Hence, $\mathrm{val}\cref{prob:primal-linear-constr-discret}
=\mathrm{val}\cref{prob:GenLC-finite-rep}.$
By part (i),
$\mathrm{val}\cref{prob:tensor-comp-dual-app}
=
\mathrm{val}\cref{prob:primal-linear-constr-discret}.$
Moreover, by \eqref{eq:val-chain},
$\mathrm{val}\cref{prob:tensor-comp-dual}
=
\mathrm{val}\cref{prob:GenLC-finite-rep}.$
Therefore,
$\mathrm{val}\cref{prob:tensor-comp-dual}
=
\mathrm{val}\cref{prob:tensor-comp-dual-app}.$

Now we prove the ``only if'' direction. Suppose that
$
\mathrm{val}\cref{prob:tensor-comp-dual}
=
\mathrm{val}\cref{prob:tensor-comp-dual-app}.$
By part (i) and \eqref{eq:val-chain}, we have
$\mathrm{val}\cref{prob:primal-linear-constr-discret}
=
\mathrm{val}\cref{prob:tensor-comp-dual-app}
=
\mathrm{val}\cref{prob:tensor-comp-dual}
=
\mathrm{val}\cref{prob:GenLC-finite-rep}.$
Let
$(\widehat{\T},\{\widehat{\bm\Lambda}_i\}_{i=1}^m)$
be an optimal solution to
\cref{prob:primal-linear-constr-discret}.
Set
$\Theta^*:=\Upsilon$
and define
$\bm\Lambda^*(\x_i):=\widehat{\bm\Lambda}_i$ for $i=1,\dots,m.$
By the construction of
\cref{prob:primal-linear-constr-discret},
$(\widehat{\T},\Theta^*,\bm\Lambda^*)$ is feasible to
\cref{prob:GenLC-finite-rep}, and its objective value is
$\mathrm{val}\cref{prob:primal-linear-constr-discret}
=
\mathrm{val}\cref{prob:GenLC-finite-rep}.$
Therefore,
$(\widehat{\T},\Theta^*,\bm\Lambda^*)$ is an optimal solution to
\cref{prob:GenLC-finite-rep}. Since $\Theta^*=\Upsilon$, the result
follows.

\end{proof}
\subsection{Proof of \texorpdfstring{\cref{lemma:SHsubsetS}}{Lemma \ref*{lemma:SHsubsetS}}}\label{app:SHsubsetS}

\begin{proof}[Proof of \cref{lemma:SHsubsetS}]
We assume that $\Upsilon$ spans $\R^{n_1}$ without loss of generality: if $\Upsilon$ does not span $\R^{n_1}$, we can add elements to obtain a spanning set $\mathcal{H}''$ with $\widehat{\Phi}(\mathcal{H}'')\subseteq\widehat{\Phi}(\Upsilon)$, so proving strictness for $\mathcal{H}''$ implies it for $\Upsilon$.
It is immediate that $\Phi\subseteq\widehat{\Phi}(\Upsilon)$.
It remains to exhibit some $\Z\in\widehat{\Phi}(\Upsilon)\setminus\Phi$.
We construct such a tensor in the form $\Z:=\mathbf{u}\otimes\mathbf{M}$ with $\mathbf{u}\in\mathbb{B}^{n_1}$ and $\mathbf{M}\in\R^{n_2\times n_3}$, and show
\begin{align}
    \|\Z(\x)\|_\sigma\leq 1\quad\forall\,\x\in\Upsilon,\label{eq:ZinhatS}\\
    \|\Z(\mathbf{u})\|_\sigma> 1,\quad\text{where }\mathbf{u}\in\mathbb{B}^{n_1}.\label{eq:ZnotinS}
\end{align}
By the Schur complement,~\cref{eq:ZinhatS} is equivalent to $\Z\in\widehat{\Phi}(\Upsilon)$ and~\cref{eq:ZnotinS} is equivalent to $\Z\notin\Phi$.
Since $\Upsilon$ contains only finitely many elements, we can find $\mathbf{u}\in\mathbb{B}^{n_1}$ such that $\mathbf{u}\notin\Upsilon$ and $-\mathbf{u}\notin\Upsilon$.
Let $c:=\max_{\x\in\Upsilon}|\la\mathbf{u},\x\ra|$; then $0<c<1$ because $\Upsilon$ spans $\R^{n_1}$ while $\mathbf{u}$ is parallel to no element of $\Upsilon$.
Let $\mathbf{M}\in\R^{n_2\times n_3}$ satisfy $\|\mathbf{M}\|_\sigma=1/c$.
Then for any $\x\in\Upsilon$,
\begin{align*}
    \|\Z(\x)\|_\sigma=\|(\mathbf{u}\otimes\mathbf{M})(\x)\|_\sigma=\|\la\mathbf{u},\x\ra\mathbf{M}\|_\sigma=|\la\mathbf{u},\x\ra|\,\|\mathbf{M}\|_\sigma\leq c\cdot\frac{1}{c}=1,
\end{align*}
while
\begin{align*}
    \|\Z(\mathbf{u})\|_\sigma=\|\la\mathbf{u},\mathbf{u}\ra\mathbf{M}\|_\sigma=\|\mathbf{M}\|_\sigma=\frac{1}{c}>1.
\end{align*}
This proves~\cref{eq:ZinhatS} and~\cref{eq:ZnotinS}, and hence $\Phi\subsetneq\widehat{\Phi}(\Upsilon)$.

\end{proof}
\subsection{Proof of \texorpdfstring{\cref{thm:dual-app}}{Theorem \ref*{thm:dual-app}}}\label{app:dual-app}
\begin{proof}[Proof of \cref{thm:dual-app}]
    For simplicity, we use $\widehat{\Z}$ as shorthand of $\widehat{\Z}_{\Upsilon}$ in this proof.
    By the assumption that $\x^*\in\Upsilon$ and $
    \widehat\Z\in\widehat{\Phi}(\Upsilon)$, we have 
    \[
        \begin{bmatrix}
        \I_{n_2} &\widehat\Z(\x^*)\\
        \widehat\Z(\x^*)^\top &\I_{n_3}
        \end{bmatrix}\succeq \Zero.
    \]
By Schur complement, it is also equivalent to 
    \[
        \I_{n_3}\succeq \widehat\Z(\x^*)^\top\widehat\Z(\x^*) \;\Leftrightarrow\;  \left\|\widehat\Z(\x^*)\right\|_\sigma \leq 1.
    \]
    As $\x^*$ is the optimal solution of $\max_{\|\x\|=1} \left\|\widehat{\Z}(\x)\right\|_{\sigma}$, we also have
    \[
        \left\|\widehat\Z(\x)\right\|_\sigma\le \left\|\widehat\Z(\x^*)\right\|_\sigma  \leq 1~~\forall\, \x\in \S^{n_1},
    \]
    which, by Schur complement, is equivalent to
    \[
        \begin{bmatrix}
        \I_{n_2} &\widehat\Z(\x)\\
        \widehat\Z(\x)^\top &\I_{n_3}
    \end{bmatrix}\succeq \Zero\;\;\forall\, \x\in \S^{n_1}.
    \]
    Thus it completes the proof. 
\end{proof}
\subsection{Proof of \texorpdfstring{\cref{thm:alg-converge}}{Theorem \ref*{thm:alg-converge}}}\label{app:alg-converge}

\begin{proof}[Proof of \cref{thm:alg-converge}]
    For $t=0,1,\dots$, let $v_t$ denote the optimal value of
    \cref{prob:tensor-comp-dual-app} with
    $\Upsilon=\Upsilon_t$, so that
    $v_t=\boldsymbol{\lambda}^{*\top}_t\mathbf{b}$, and let
    $m_t := |\Upsilon_t|$. We first analyze the case in which
    \cref{alg:adaptive-H} terminates after finitely many iterations and then we establish claim (i)--(iii) for the infinitely many iterations case.

    \textbf{Finite termination.}
    If Algorithm~\ref{alg:adaptive-H} 
    terminates after $t^*$ iterations, it means $\x_{t^*}^*\in \Upsilon_{t^*}$ and by \cref{thm:dual-app}, the optimality of $\Z_{t^*}^*$ for \cref{prob:tensor-comp-dual} follows. Furthermore, under $\Upsilon_{t^*}$ and $\widehat{\T}$, we have \[
        \text{val}\cref{prob:primal-linear-constr-discret} = \text{val}\cref{prob:tensor-comp-dual-app} = \text{val}\cref{prob:tensor-comp-dual} = \text{val}\cref{prob:GenLC-finite-rep} = \text{val}\cref{prob:tensor-comp-linear-constr},
    \]
    where the first equality is by \cref{thm:dual-dis}(i), the second one is by \cref{thm:dual-app}, the third and last one is by \cref{eq:val-chain}. Because $\widehat{\T}$ is feasible to \cref{prob:tensor-comp-linear-constr}, $\widehat{\T}$ is an optimal solution to \cref{prob:tensor-comp-linear-constr}.

    \textbf{The asymptotic convergence case: Proof of claim (i) and (ii).}
    We first show the existence of limit points of the sequences $\{\bm\lambda_t^*, \Z_t^*\}_{t\geq 0}$ and $\{\x_t^*\}_{t\geq 0}$.
    The feasible set of \cref{prob:tensor-comp-dual-app} is compact according to \cref{prop:sol-dual-app}(i). Using the Bolzano--Weierstrass theorem,
    $\{(\boldsymbol{\lambda}^*_t,\mathcal{Z}^*_t)\}_{t\ge 0}$ has at least one limit point.
    Let $\{(\boldsymbol{\lambda}^*_{t_k},\mathcal{Z}^*_{t_k})\}_{k\ge0}$ be a convergent subsequence of $\{(\boldsymbol{\lambda}^*_t,\mathcal{Z}^*_t)\}_{t\ge 0}$ and $\lim_{k\to\infty} (\boldsymbol{\lambda}^*_{t_k},\mathcal{Z}^*_{t_k}) =(\overline{\boldsymbol{\lambda}},\overline{\mathcal{Z}})$.
    We take the index $\{t_k\}_{k\geq 0}$ of $\{(\boldsymbol{\lambda}^*_{t_k},\mathcal{Z}^*_{t_k})\}_{k\ge0}$ and consider the corresponding subsequence $\{\x_{t_k}^*\}_{k\geq 0}$.
    Since $\mathbb{B}^{n_1}$ is compact, 
    there exists a subsequence of $\{\x^*_{t_k}\}$, denoted by $\{\x^*_{{t_k}_j}\}$, such that $\mathbf{x}^*_{{t_k}_j}\to\overline{\mathbf{x}}$ for some $\overline{\mathbf{x}}\in\mathbb{B}^{n_1}$. 
    Note that $\lim_{j\to\infty} (\bm\lambda^*_{{t_k}_j}, \Z^*_{{t_k}_j}) = (\overline{\bm\lambda}, \overline{\Z})$. 

    To prove claim (i) and (ii),
    we first show that $(\overline{\boldsymbol{\lambda}},\overline{\mathcal{Z}})$ is feasible to \cref{prob:tensor-comp-dual}. For the linear constraint \cref{prob:tensor-comp-dual-lincon}: $\mathcal{Z}^*_{{t_k}_j}
    =\sum_{\ell=1}^d(\boldsymbol{\lambda}^*_{{t_k}_j})_\ell\mathcal{A}_\ell$ for every
    $j$, and both sides are continuous in
    $(\boldsymbol{\lambda},\mathcal{Z})$, so
    $\overline{\mathcal{Z}}
    =\sum_{\ell}\overline{\boldsymbol{\lambda}}_\ell\mathcal{A}_\ell$, i.e.,
    \cref{prob:tensor-comp-dual-lincon} holds.
     
    Now we consider the semi-definite constraints \cref{prob:tensor-comp-dual-sdp}. 
    For all $j\geq 1$, we note that $\Z^*_{{t_k}_j}\in \widehat{\Phi}(\Upsilon_{{t_k}_j}) \subseteq \widehat{\Phi}(\Upsilon_{{t_k}_{j-1}})$. Then it holds that 
    \[
        \begin{bmatrix}
        \I_{n_2} &\Z^*_{{t_k}_j}(\x^*_{{t_k}_{j-1}})\\
        \Z^*_{{t_k}_j}(\x^*_{{t_k}_{j-1}})^\top &\I_{n_3}
    \end{bmatrix}\succeq \bm 0 ~\overset{\cref{eq:schur-spec}}{\Rightarrow}~ \|\Z^*_{{t_k}_j}(\x^*_{{t_k}_{j-1}})\|_\sigma \leq 1,
    \]
    for all $j\geq 1$. Let $j\to \infty$, we have 
    $\|\overline{\Z}(\overline{\x})\|_\sigma \leq 1$.
    
    In particular, by \cref{eq:tensor-norm-matrix-norm}, we have
$\|\mathcal{Z}^*_{{t_k}_j}\|_\sigma
    =\max_{\|\mathbf{x}\|=1} \|\mathcal{Z}^*_{{t_k}_j}(\mathbf{x})\|_\sigma
    = \|\Z^*_{{t_k}_j}(\x^*_{{t_k}_j})\|_\sigma
    \qquad\text{for every } j$ .
    
    Then 
 $\|\overline{\mathcal{Z}}\|_\sigma
    =\lim_{j\to\infty}\|\mathcal{Z}^*_{{t_k}_j}\|_\sigma
    =\lim_{j\to\infty} \|\Z^*_{{t_k}_j}(\x^*_{{t_k}_j})\|_\sigma
    = \|\overline{\Z}(\overline{\x})\|_\sigma \leq 1$.
    Hence
$\|\overline{\mathcal{Z}}\|_\sigma\le 1$. 
    Again, by  \cref{eq:tensor-norm-matrix-norm} and
    \cref{eq:schur-spec}, $\overline{\Z}$ is feasible to 
    \cref{prob:tensor-comp-dual-sdp}. Therefore $(\overline{\boldsymbol{\lambda}},\overline{\mathcal{Z}})$ is feasible to \cref{prob:tensor-comp-dual}.
     
    It remains to show that $(\overline{\boldsymbol{\lambda}},\overline{\mathcal{Z}})$ is optimal.
    To prove this argument, we first show that the limit of $\{v_t\}_{t\geq 0}$ exists and $\lim_{t\to\infty} v_{t}
    = v_\infty
    \ge\mathrm{val}\cref{prob:tensor-comp-dual}$. To see this,     
    By the construction of the algorithm, it holds that
    $\Upsilon_0\subseteq\Upsilon_1\subseteq\cdots$ and consequently, $\widehat{\Phi}(\Upsilon_{t+1})\subseteq \widehat{\Phi}(\Upsilon_t)$. Hence the feasible region of
    \cref{prob:tensor-comp-dual-app} shrinks as $t$ grows and
    $v_0\ge v_1\ge\cdots$. Moreover,
    $\Phi\subseteq\widehat{\Phi}(\Upsilon_t)$ for every
    $t$, so every feasible point of \cref{prob:tensor-comp-dual} is feasible
    for \cref{prob:tensor-comp-dual-app}, which gives
    $v_t\ge\mathrm{val}\cref{prob:tensor-comp-dual}$ for all $t$. Since $\{v_t\}_{t\ge0}$ is nonincreasing and bounded below, it converges to a limit
    $v_\infty\ge\mathrm{val}\cref{prob:tensor-comp-dual}$.

    By \cref{lemma:ZT}, for any $\T$ satisfying $\mathscr{A}(\T) = \mathbf{b}$,
    \[
    \overline{\boldsymbol{\lambda}}^\top\mathbf{b}
    =\langle\overline{\mathcal{Z}},\mathcal{T}\rangle
    =\lim_{t\to\infty}\langle\mathcal{Z}^*_{t},\mathcal{T}\rangle
    =\lim_{t\to\infty} v_{t}
    = v_\infty
    \ge\mathrm{val}\cref{prob:tensor-comp-dual}.
    \]
    Since $(\overline{\boldsymbol{\lambda}},\overline{\mathcal{Z}})$ is feasible to \cref{prob:tensor-comp-dual},     $\overline{\boldsymbol{\lambda}}^\top\mathbf{b} = v_{\infty}
    =\mathrm{val}\cref{prob:tensor-comp-dual}$ and hence,     $(\overline{\boldsymbol{\lambda}},\overline{\mathcal{Z}})\in\Psi^*$. This proves claim (i) and (ii).
     
    \textbf{Proof of claim (iii).}
    To prove the claim, we first prove $\limsup_{t\to\infty}\|\mathcal{T}^*_t\|_*\le\mathrm{val}\cref{prob:tensor-comp-linear-constr}$ and then we show $\|\mathcal{T}^*_t\|_*\ge\mathrm{val}\cref{prob:tensor-comp-linear-constr}$ for every $t$.
    To show the ``$\leq$'' direction, note that by the second inequality in \cref{ineq:app-1} and \cref{thm:dual-dis}(i),
    \[
        \|\mathcal{T}^*_t\|_*\le\mathrm{val}\cref{prob:primal-linear-constr-discret}
    =\mathrm{val}\cref{prob:tensor-comp-dual-app}=\boldsymbol{\lambda}^{*\top}_t\mathbf{b},
    \]
    so claim (ii) and \cref{eq:val-chain} yield $\limsup_{t\to\infty}\|\mathcal{T}^*_t\|_*\le\mathrm{val}\cref{prob:tensor-comp-dual}
    =\mathrm{val}\cref{prob:tensor-comp-linear-constr}$. Now we prove the ``$\geq$'' direction. Note that
    $\mathscr{A}(\mathcal{T}^*_t)=\mathbf{b}$ by the last constraint of
    \cref{prob:primal-linear-constr-discret}, so $\mathcal{T}^*_t$ is feasible for
    \cref{prob:tensor-comp-linear-constr}, therefore,
    $\|\mathcal{T}^*_t\|_*\ge\mathrm{val}\cref{prob:tensor-comp-linear-constr}$ for every $t$.
    Combining the two directions, we have $\lim_{t\to\infty}\|\mathcal{T}^*_t\|_*
    =\mathrm{val}\cref{prob:tensor-comp-linear-constr}$, i.e., $\mathcal{T}^*_t$ is
    asymptotically optimal for the tensor recovery problem. 

\end{proof}

\end{document}